\newcommand{\bbox}{\operatorname{Box}}
\newcommand{\LBox}{\operatorname{LBox}}
\newcommand{\RBox}{\operatorname{RBox}}
\newcommand{\sep}{\operatorname{Sep}}
\newcommand{\logsep}{\operatorname{lSep}}
\newcommand{\cau}{\operatorname{Cau}}
\newcommand{\cont}{\operatorname{cont}}
\newcommand{\Mea}{\operatorname{Mea}}
\newcommand{\logmea}{\operatorname{lMea}}
\newcommand{\Len}{\operatorname{Len}}
\newcommand{\loglen}{\operatorname{lLen}}
\newcommand{\Disc}{\operatorname{Disc}}
\newcommand{\gdisc}{\operatorname{GDisc}}
\newcommand{\logGdisc}{\operatorname{lGDisc}}
\newcommand{\res}{\operatorname{Res}}
\newcommand{\LCF}{\operatorname{lc}}
\newcommand{\tcoeff}{\operatorname{tcoeff}}
\newcommand{\mult}{\operatorname{mul}}
\newcommand{\tO}{\tilde{O}}
\newcommand{\Z}{\mathbb{Z}}
\newcommand{\Q}{\mathbb{Q}}
\newcommand{\R}{\mathbb{R}}
\newcommand{\C}{\mathbb{C}}
\newcommand{\sr}{\mathrm{sr}}
\newcommand{\sign}{\operatorname{sign}}
\newcommand{\Crit}{\operatorname{Crit}}
\newcommand{\Sing}{\operatorname{Sing}}
\newcommand{\sDisc}{\operatorname{sDisc}}
\newcommand{\Ker}{\operatorname{Ker}}
\newcommand{\Ima}{\operatorname{Im}}
\newcommand{\Match}{\operatorname{Match}}
\newcommand{\eps}{\varepsilon}
\newcommand{\hide}[1]{}
\definecolor{grey}{rgb}{0.75,0.75,0.75}
\definecolor{orange}{rgb}{1.0,0.5,0.5}
\definecolor{brown}{rgb}{0.5,0.25,0.0}
\definecolor{pink}{rgb}{1.0,0.5,0.5}
\newtheorem{notation}[definition]{Notation}
\newtheorem{exemple}[definition]{Example}
\newtheorem{algorithm}{Algorithm}
\renewenvironment{proof}[1][\noindent{\it Proof}]{\noindent{\it #1}. } {\hspace*{\fill}\qedsymbol\medskip}
\newcommand{\qedsymbol}{$\square$}
\begin{document}
\title{
Bounds for polynomials on algebraic 
numbers and 
application to curve topology}

\author{
Daouda Niang Diatta \and
S\'eny Diatta \and
Fabrice Rouillier \and
Marie-Fran{\c c}oise Roy\and
Michael Sagraloff\\
 }

 \institute{D. N. Diatta \at
              {Universit{\'e} Assane Seck de Ziguinchor, Senegal}   
           \and
           S. Diatta \at
               {Universit{\'e} Assane Seck de Ziguinchor, Senegal}
               \and
           F. Rouillier \at
          {INRIA Paris, IMJ-PRG - Sorbonne Universit{\'e}s, France}
           \and * M.-F. Roy \at
            {IRMAR, Universit{\'e} de Rennes I, Campus de Beaulieu, 35042 Rennes CEDEX, France, 33(0)223236020}\\
            marie-francoise.roy@univ-rennes1.fr
             \and 
             M. Sagraloff \at
           {HAW Landshut, Germany}
}

\date{Received: date / Accepted: date}

\maketitle

\begin{abstract}
Let $P \in \Z [X, Y]$ be a given square-free polynomial  of total degree $d$ with integer coefficients of bitsize less than 
 $\tau$, and let $$V_{\mathbb{R}}
 (P) := \{ (x,y) \in \mathbb{R}^2, P (x,y)
 = 0 \}$$ be the real planar algebraic curve implicitly defined as the vanishing set of $P$. We give a deterministic 
 algorithm to compute the topology of $V_{\mathbb{R}} (P)$ in terms of a 
 simple straight-line planar graph
 $\mathcal{G}$ that is isotopic to $V_{\mathbb{R}} (P)$.
 The upper bound on the bit complexity of our algorithm is in $\tO (d^5 \tau
 + d^6)$\footnote{The expression "the complexity is in $\tO(f(d,\tau))$" with $f$  a polynomial in $d,\tau$ is an abbreviation for  the expression "there exists a positive integer $c$ such that the complexity is in $O((\log d \log \tau)^c f(d,\tau))$".} ;
 which matches the current record bound
 for the problem of computing the topology of a planar algebraic curve.
However, compared to existing algorithms with comparable complexity, our method 
 does not consider any change of coordinates, and more importantly the returned 
 simple planar graph 
 $\mathcal{G}$ yields the cylindrical algebraic decomposition information of the curve
 in the original coordinates.

Our result is based on two main ingredients: First, we 
derive amortized quantitative bounds on the roots of polynomials with algebraic coefficients as well as adaptive methods for computing the roots of 
bivariate polynomial systems
 that actually exploit this amortization. 
 The results we obtain are more general  that the previous literature.
Our second ingredient is a novel approach for the computation of the 
 local topology of the curve in a neighborhood of all 
 singular
 points.

\end{abstract}

 \subclass{14P25 \and 68W30 \and 13P15  \and 14Q05 \and 68Q25}
\keywords{Amortized bound on algebraic numbers \and Real Algebraic Curves \and Exact Topology Computation}

\section{Introduction}
Let $P \in \Z [X, Y]$ be a given square-free polynomial  of total degree $d$ with integer coefficients of bitsize less than 
 $\tau$. 
The problem of computing the topology of the planar algebraic curve
\[
V_{\mathbb{R}}
 (P) := \{ (x,y) \in \mathbb{R}^2, P (x,y)
 = 0 \},
\]
 that is, the computation of 
a simple  straight-line  planar graph 
isotopic to
$V_{\mathbb{R}} (P)$ inside $\mathbb{R}^2$, is a classical problem in
algorithmic real algebraic geometry with many
applications in Computer Aided Geometric Design. It is extensively studied in
the context of symbolic 
or semi-numerical computation; for instance, see~\cite{AM1,AM2,AMW,BEKS,BCGY,CLPPRT,DMR,EKW,GE1,GI,KS,KoS,MSW2,WM} for recent references. 

Almost all 
 algorithms are based on some variant of
Cylindrical Algebraic Decomposition (C.A.D.): Decompose the $X$-axis into a finite number of
open intervals and points above which the curve has a cylindrical structure (i.e.~the curve decomposes into disjoint function graphs above each of these intervals).
The \emph{special values} are the projections of the $X$-critical  
points and vertical asymptotes
onto the $X$-axis.
Taking
points between two special values defines 
\emph{regular} fibers.

   \begin{figure}[h]
 \center{\resizebox{5cm}{4cm}
 {\includegraphics{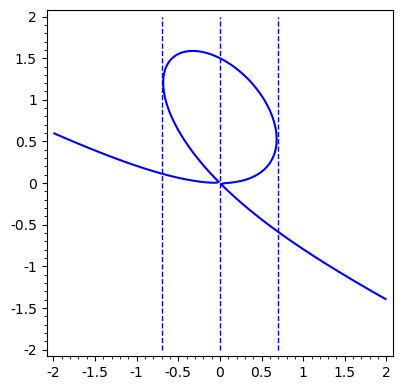}}}
\caption{Example of a C.A.D. with three special values}
  \end{figure}
  
Computing a 
simple planar graph 
isotopic to $V_{\mathbb{R}} (P)$ inside $\mathbb{R}^2$ then
 essentially amounts to connecting the points of a regular fiber to the
points of its neighboring special fibers.

There are two main difficulties:
\begin{itemize}
\item[(1)] computing the points of the special fibers,
which amounts for computing the real roots of univariate polynomials with real
algebraic coefficients, which are not square-free;
\item[(2)] computing the number 
 curve segments
  that go to each of the
 points of the special fiber, to the left and to the right.
\end{itemize}

A usual strategy for  dealing with (2)
(see \cite{BPRbook2,DMR,EKW,GE1,GI,DBLP:phd/de/Kerber2009,KoS,MSW2}),
consists in putting the curve in 
\emph{generic position}. This is typically achieved by considering a 
\emph{shearing} that maps $X$ to $X+s\cdot Y$ for some (small) integer $s$. 
For most values of $s$,
the sheared curve has no asymptotes and each special fiber contains at most one $X$-critical or singular point, which considerably eases (2). 
Since a shearing does not change the topology of the curve, the 
simple planar graph 
returned by such algorithms is still isotopic to the curve. However,  
these algorithms do not compute a C.A.D. of the curve itself but only of the sheared curve. This  
is
critical in some applications, for instance, 
when the variables $X$ and $Y$ represent physical values (such as length) that are always positive, or probabilities that are between $0$ and $1$. The topology of the curve restricted to a quadrant or a square can be easily extracted from our output but would be complicated to recover after a shearing of the form $X\mapsto X+s\cdot Y$. Another advantage of keeping the initial variables is that the sparseness of the input equation is preserved.

There exist algorithms~\cite{EKW,DBLP:phd/de/Kerber2009} that go one step further by performing a shearing of the curve in the first step and an inverse shearing in the second step in order to eventually compute an isotopic 
simple planar graph 
whose vertices are located on the curve. Such algorithms compute the C.A.D. of the given curve, however the bit complexity of these methods falls clearly behind the best algorithms~\cite{KoS,MSW2} for computing the topology of a planar algebraic curve, which achieve the complexity bound\footnote{The algorithm from~\cite{KoS} is deterministic, whereas~\cite{MSW2} uses randomization. Both algorithm consider a shearing of the original curve and only return the C.A.D. information of the sheared curve.} $\tO(d^5\tau+d^6)$. Our algorithm achieves the same complexity bounds, but it never performs any coordinate transformation and yields the C.A.D. information of the original curve.

\begin{theorem}[Topology]\label{finaltopology}
 Let $P \in \Z [X, Y]$ be a given square-free polynomial of total degree $d$
and integer coefficients of bitsize bounded by $\tau$. There is a deterministic  
algorithm\footnote{We do not only prove existence of such an algorithm, but also present the algorithm in this paper.} that uses $\tO(d^5\tau+d^6)$ bit operations to compute the topology of the curve $$V_{\mathbb{R}} (P)= \{(x, y) \in \mathbb{R}^2 \mid P (x, y) = 0\},$$ in terms of a 
simple  planar graph 
 $\mathcal{G}$ that is isotopic to $V_{\mathbb{R}}(P)$ inside $\mathbb{R}^2$. In addition, $\mathcal{G}$ yields the C.A.D. information of the curve $V_{\mathbb{R}}(P)$.\footnote{We remark that our algorithm returns a purely combinatorial representation of the C.A.D., however isolating intervals of all points in all special and regular fibers are computed in sub-steps of the algorithm. By refining these intervals, an isotopic 
 simple planar graph
  whose vertices are arbitrarily close to the curve can be 
 obtained.}
 \end{theorem}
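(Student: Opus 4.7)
\noindent\emph{Proof proposal.}
The plan is to follow the classical Cylindrical Algebraic Decomposition template recalled in the introduction, but to execute every step in the original coordinates so that the output truly describes the C.A.D.\ of $V_{\mathbb{R}}(P)$. Concretely, I would (i) compute the special $X$-values, (ii) compute each special and regular fiber, and (iii) perform the connection step that determines, for every point of a special fiber, how many half-branches of $V_{\mathbb{R}}(P)$ reach it from the left and from the right; finally I would read off the straight-line planar graph $\mathcal{G}$ from this combinatorial data.

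For (i), the special $X$-values are the real roots of $R(X)=\res_Y(P,\partial_Y P)$ together with the real roots of $\LCF_Y(P)$, which account for vertical asymptotes and vertical tangents. Standard univariate real root isolation provides isolating intervals within $\tO(d^5\tau+d^6)$ bit operations, and between consecutive special values I pick a rational regular value of just enough bitsize. For (ii), fibers above regular values are square-free univariates and are handled by standard tools. Fibers above a special $X$-value $\alpha$ are the delicate case: $P(\alpha,Y)$ has algebraic coefficients and is in general not square-free. Here I invoke the first main ingredient of the paper, namely the amortized bounds on the roots of polynomials with algebraic coefficients together with the adaptive isolation scheme that exploits them. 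Concretely, the subresultant sequence of $P$ and $\partial_Y P$ specialized at $\alpha$ yields both a square-free representation and the multiplicity pattern of the real roots of $P(\alpha,Y)$, and the adaptive isolation then runs at a cost that, summed over all special values, telescopes into the target complexity by amortization rather than a crude per-fiber worst case.

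Step (iii) is the conceptual crux. Without shearing, a single special fiber may contain several $X$-critical or singular points, and the usual generic-position arguments do not apply. I would use the second main ingredient of the paper: around each critical or singular point, compute a certified axis-aligned box on which the local topology of $V_{\mathbb{R}}(P)$ is fully determined by boundary data, namely sign patterns of $P$ along the box boundary, subresultant information on the edges, and the multiplicities already determined in (ii). This yields the exact number of half-branches entering the critical point from the left and from the right. Combined with the cylindrical behavior of the curve on each strip between two consecutive special values, this counting information produces a unique correct matching between the points of a regular fiber and those of its two neighboring special fibers, and hence the isotopy-preserving graph $\mathcal{G}$.

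The main obstacle, and the place where both main ingredients must work in concert, is ensuring that step (iii) is certified and fits within the $\tO(d^5\tau+d^6)$ budget without any shearing: the local-topology analysis at a critical point has to be charged against the bit complexity already spent to isolate its coordinates, and the total cost over all special fibers must be bounded by amortization rather than by the worst-case separation bound at a single fiber. Establishing these amortized complexity estimates simultaneously for all special fibers, and propagating them through the subresultant-based local topology analysis so that coincident critical points in the same fiber are disambiguated without a change of coordinates, is where the technical heart of the proof lies.
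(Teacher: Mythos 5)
Your overall architecture coincides with the paper's: special values from the discriminant/resultant and leading coefficient, fibers over special values via the amortized adaptive root isolation (Theorem~\ref{firstmain} / Theorem~\ref{sing-fibers}), and a connection step that counts half-branches at each critical point from data on the boundary of an axis-aligned box. However, there is a genuine gap in step (iii), and it sits exactly at the point you describe as the ``conceptual crux.'' You assert that the local topology inside each box is ``fully determined by boundary data, namely sign patterns of $P$ along the box boundary,'' implicitly assuming that these sign patterns can be certified within the $\tO(d^5\tau+d^6)$ budget. This is precisely what the paper states it \emph{cannot} do: when an isolating interval for a root of $\tilde P(X,\gamma^+)$ on a horizontal edge overlaps the vertical edge abscissa $\alpha^-$ (and symmetrically), one would need the sign of $\tilde P(\alpha^-,\gamma^+)$ at the corner, and no method within the target complexity is known (Remark~\ref{amb}). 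The actual argument does not resolve these ambiguities; it proves instead (Proposition~\ref{ambigousnoproblem}, via the slope-sign case analysis feeding Algorithm~\ref{algoconnect}) that the counts $\textsc{Left}$ and $\textsc{Right}$ are \emph{invariant} under every possible resolution of an ambiguous corner or midpoint, so one may arbitrarily declare $\tilde P$ zero there. Without this invariance argument, or an alternative certification of the corner signs, your connection step is not certified within the claimed complexity.

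Two further ingredients that your sketch omits but that the correctness of the matching argument depends on: first, the adjacency boxes are not arbitrary small boxes around each critical point --- their horizontal (resp.\ vertical) extents are delimited by consecutive roots of $(S_X^\star)'$ (resp.\ $(S_Y^\star)'$), which guarantees that each open half-box contains no $X$- or $Y$-critical point, so that every arc is the graph of a monotone function and the order-preserving matching of boundary points (Proposition~\ref{forcorrectness}) holds; second, since no shearing is performed you must separately extract and re-insert the vertical lines contained in $V_{\mathbb{C}}(P)$ and count the vertical asymptotic branches on each side of each special value (Proposition~\ref{asympt}), neither of which is addressed in your plan.
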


For our result, we use two main ingredients: The first one is a
new
efficient algorithm for computing the roots of a bivariate system in triangular form. 

\begin{theorem}[Bivariate Root Isolation]\label{firstmain}
Let $R\in\Z[X]$ and $F\in \Z[X,Y]$ be polynomials of total degrees $N$ and $n$ and with integer coefficients of bitsize less than $\Lambda$ and $\tau$ respectively, such that  
$$V_{\mathbb{C}}(R,F)=\{(x,y)\in \mathbb{C}^2\mid R(x)=F(x,y)=0\}$$
is finite. Using a total number of bit operations bounded by
\[
\tO(N^2\Lambda+N^3+n^5\tau+n^6+n\cdot\max(n^2,N)\cdot (N\tau+n\Lambda+Nn)),
\]
we can compute
\begin{itemize}
\item[(a)] $\deg F(x,Y)$ as well as $\deg \gcd (F(x,Y),\partial_Y F(x,Y))$ for all complex roots $x$ of $R$,
\item[(b)] isolating disks ${\mathcal D}_{x,y}\subset\mathbb{C}$ for all complex roots $y$ of all polynomials $F(x,Y)$ for all complex roots $x$ of $R$, as well as the corresponding multiplicities 
$
\mult(y, F(x,Y))$
\item[(b')] for a given subset $V\subset\{(x,y)\in\mathbb{C}^2:R(x)=0\text{ and }F(x,y)=0\}$ and an $L\in\mathbb{N}$, we can further refine all isolating disks ${\mathcal D}_{x,y}$, with $(x,y)\in V$, to a size less than $2^{-L}$, in an additional number of bit operations bounded by 
$$ \tO(N^2\Lambda+N^3+n\cdot\max(n^2,N)\cdot(N\tau+n\Lambda+Nn)+L\cdot(N\cdot\mu+n^2\cdot\sum_{x \in \pi_X(V)}\mu_{x})),$$
where $\mu_{x}:=\max_{(x,y) \in V}
\mult(y, F(x,Y))$, 
$\pi_X(V)$ is the projection of $V$ on the $X$-axis and $\mu=\max_{x \in \pi_X(V)}\mu_{x}$.
\end{itemize}
\end{theorem}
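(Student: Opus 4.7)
The plan is to separate the algorithm into three phases corresponding to items $(a)$, $(b)$, and $(b')$, and to drive the overall complexity by an amortized analysis summed over the roots of $R$ rather than a worst-case per-root estimate.

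First I would isolate the complex roots $z$ of $R$ using a standard certified univariate solver, contributing $\tO(N^{2}\Lambda+N^{3})$ bit operations, and compute simultaneously isolating disks of quality sufficient to evaluate the sign/vanishing of any polynomial in $\Z[X]$ of controlled degree and bitsize at each such $z$. For part $(a)$, I would work with the coefficients of $F$ as polynomials in $Y$: $\deg F(z,-)$ is determined by the largest index $k$ with $a_{k}(z)\neq 0$, where $F(X,Y)=\sum a_{k}(X)Y^{k}$, and $\deg\gcd(F(z,-),\partial_{Y}F(z,-))$ is determined by the smallest index $j$ such that the $j$-th principal subresultant coefficient $\Sres_{j,Y}(F,\partial_{Y}F)$ does not vanish at $z$. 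Hence I would compute the subresultant sequence $\Sres_{j,Y}(F,\partial_{Y}F)\in\Z[X]$, whose members have degree $O(n^{2})$ in $X$ and bitsize $\tO(n\tau)$, and then evaluate these at all roots of $R$. Evaluating a polynomial of degree $D$ and bitsize $\sigma$ at all roots of $R$ can be done in $\tO(\max(D,N)\cdot(N\tau+n\Lambda+Nn))$ bit operations using fast multipoint evaluation combined with the amortized algebraic-number bounds announced as the paper's first main ingredient; summed over $n$ subresultants this yields the term $\tO(n\cdot\max(n^{2},N)\cdot(N\tau+n\Lambda+Nn))$.

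For $(b)$, once $\deg F(z,-)=d_{z}$ and the gap pattern of the subresultants at $z$ are known, the roots of $F(z,-)$ together with their multiplicities can be isolated by running a certified univariate solver on $F(z,-)$, using interval arithmetic on the coefficients computed from the isolating disk of $z$. The cost is controlled by an amortized Davenport--Mahler-type bound: although the coefficients of $F(z,-)$ may be algebraic of enormous worst-case bitsize, the total separation $\sum_{z}\sum_{z'}\log(1/\sep(z',F(z,-)))$, summed over all special fibers, is bounded by the logarithmic Mahler measure of the resultant $\res_{Y}(F,\partial_{Y}F)$, which has controlled size. This is exactly where the adaptive/amortized strategy promised in the introduction enters, and it is the main obstacle: one must both prove the global bound and design a precision-adaptive solver that realizes it, rather than paying the worst case at every $z$. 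The bound $\tO(n^{5}\tau+n^{6})$ in the total complexity corresponds to the contribution of this amortized bivariate isolation, independent of $N$ and $\Lambda$.

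Finally, for $(b')$ I would refine each isolating disk $\mathcal{D}_{z,z'}$ with $(z,z')\in V$ down to width $2^{-L}$ by a multiplicity-aware refinement: a Newton-type iteration (or quadratic interval refinement for simple roots, damped by $\mu(z,z')$ for roots of higher multiplicity), interleaved with refinement of the isolating disk of $z$ so that the coefficients of $F(z,-)$ are known to sufficient precision. Each refinement step doubles the precision and costs $\tO(n^{2})$ bit operations on the $Y$-side (through operations on $F(z,-)$) and $\tO(N)$ on the $X$-side (to refine $z$), weighted by the multiplicity; summing over $V$ and using $\mu_{z}=\max_{(z,z')\in V}\mu(z,z')$ gives exactly the additional term $\tO(L\cdot(N\mu+n^{2}\sum_{z}\mu_{z}))$. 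Combining the three phases yields the claimed bound; I expect the only delicate point beyond bookkeeping is certifying that the amortized root bound holds uniformly across all fibers of $R$, which will rest on the quantitative results on roots of polynomials with algebraic coefficients developed earlier in the paper.
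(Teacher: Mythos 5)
Your overall architecture matches the paper's: isolate the roots of $R$, determine $\deg F(z,-)$ and $\deg\gcd(F(z,-),\partial_Y F(z,-))$ via subresultant coefficients evaluated at the roots of $R$, isolate the fibers with an amortized separation bound, and refine with multiplicity-aware Newton steps. However, there are two genuine gaps. First, your determination of $\deg\gcd(F(z,-),\partial_Y F(z,-))$ from the vanishing pattern of $\Sres_{j,Y}(F,\partial_Y F)$ at $z$ is incorrect whenever the leading coefficient of $F$ in $Y$ vanishes at $z$: subresultants do not specialize under evaluation when the degree drops, so $\Sres_{j,Y}(F,\partial_Y F)(z)$ is not the $j$-th subresultant coefficient of the pair $(F(z,-),\partial_Y F(z,-))$. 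The paper handles this by first stratifying the roots of $R$ by the degree $\ell=\deg F(z,-)$ (via the polynomials $R^\star_\ell$) and then computing the subdiscriminant sequence of \emph{each truncation} $F_\ell=f_\ell Y^\ell+\cdots+f_0$ separately (Lemmas~\ref{computesubres} and~\ref{pro:degrees}); this is precisely where the $\tO(n^5\tau+n^6)$ term originates, not from the fiber isolation of part~(b) as your accounting suggests.

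Second, your amortized separation bound does not hold as stated: you claim $\sum_{z}\sum_{z'}\log(1/\sep(z',F(z,-)))$ is controlled by the Mahler measure of $\res_Y(F,\partial_Y F)$, but the polynomials $F(z,-)$ have multiple roots exactly at the interesting fibers, so the classical Davenport--Mahler route through the discriminant of $F(z,-)$ gives nothing (that discriminant is zero), and the resultant $\res_Y(F,\partial_Y F)$ only records where multiple roots occur, not how close the \emph{distinct} roots of $F(z,-)$ are to one another. The paper's substitute is the generalized discriminant $\gdisc$ together with Propositions~\ref{GammaDeltaSigma}, \ref{generalbivariate} and~\ref{thm:bounds}, which bound $\sum_z\mu(z)\logsep(F(z,-))$ by $\tO(n(N\tau+n\Lambda+Nn))$ via a resultant construction with an auxiliary variable $U$ that isolates the first non-vanishing Taylor coefficient at each multiple root. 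Relatedly, before invoking the root isolator of Proposition~\ref{sagraloff-isolation} you must normalize $F(z,-)$ so that its leading coefficient has absolute value in $[1/4,1]$, and the cost of computing the normalization exponents $\tau_z$ and sufficiently precise coefficient approximations must itself be amortized over all $z$ using Proposition~\ref{generalunivariate}(b); your sketch does not address this, and it is a non-trivial portion of the paper's proof of Proposition~\ref{thm:costisolation}.
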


The results in Theorem~\ref{firstmain} are significantly more general than the  results in the existing literature. 
For instance, recent work~\cite{ST2018} makes two strong assumptions, namely that the degree of the polynomials $F(x,Y)$ stays invariant for all roots $x$ of $R$ and that each $F(x,Y)$ has only simple roots. In~\cite{KS,KoS,MSW2}, the polynomials $F(x,Y)$ are allowed to have multiple roots, but the degree of $F(x,Y)$ is assumed to be the same for all $x$. In addition, the analysis restricts to the special case, where $R$ is either the resultant polynomial $\res(F,\partial_YF,Y)$ or its derivative.
None of these restrictions are needed in our results.

Using Theorem~\ref{firstmain} as our main tool, we obtain an efficient 
extension of the C.A.D.
for an algebraic curve;  see Theorem ~\ref{sing-fibers0} and Theorem ~\ref{sing-fibers}.

Our second ingredient is an algorithm for computing the number of branches reaching a singular point inside an adjacency box (see Section \ref{sec:top} for a precise definition) from information computed on the boundary of the box (see Figure \ref{description_lists}, Figure \ref{matching} and Algorithm \ref{algoconnect}). In order to achieve the complexity bounds we are aiming at, we need to accept that some of the desired information stays ambiguous\footnote{For instance, we aim to compute the location of the intersections of the curve with the four boundary edges of the adjacency box. However, we were not able to show how to distinguish between the special case, where the curve passes exactly the corner of a box, and the generic case, where the curve intersects one of the neighboring edges close to the corner, using only $\tO(d^6+d^5\tau)$ bit operations. In such cases, the information at the corner of the box stays ambiguous.}, however we will prove 
that these ambiguities do not prevent us from computing the correct connectivity; see Proposition \ref{ambigousnoproblem}.\\

For both of the above ingredients, we make essential use of amortized quantitative bounds for polynomials with algebraic coefficients by considering adaptive algorithms that make it possible to exploit this amortization; see Proposition \ref{generalunivariate} and Proposition  \ref{generalbivariate}. Finally a precise combinatorial description of the information needed to draw the
simple planar graph 
isotopic to the curve is given.
Since we perform our computations without any change of variables, vertical lines and asymptotes need to be dealt with.

\paragraph{Organization of the paper}

The detailed description and complexity analysis of each
step of our algorithm  computing the isotopy type is given in Section \ref{sec:top}.

The two preceding sections are devoted to univariate and bivariate results about roots of polynomials. In each of the two sections, a part is devoted to recalling well known results, but there are also several new results, particularly amortized quantitative bounds about algebraic numbers weighted with multiplicities (see Proposition \ref{generalunivariate} and Proposition \ref{generalbivariate}) that play a key role in the proof of Theorem \ref{firstmain} 
 or in the results from Section \ref{sec:top}.

\section{Univariate results} \label{univariateres}

\subsection{Quantitative geometry of the roots}

In our complexity analysis we are going to use some quantitative results related to the geometry of the roots. We first fix some convenient notations and definitions.

\begin{definition}
\label{def0}
  For a polynomial $f = \sum^n_{i = 0} a_i X^i \in \mathbb{C} [X], \text{with }a_n \neq 0$, 
$\LCF(f)=a_n$,
 is the  \emph{leading coefficient} of $f$.
   The \emph{length} of $f$ is defined as
    \begin{equation}
  \Len (f) := \sum_{i = 0}^n | a_i | .
  \end{equation}
  The following result is straightforward: 
  \begin{equation}
   |f (x) | \le \Len (f)\cdot \max (1, | x |)^{n}.\label{subst1}
    \end{equation}
    The \emph{norm} of $f$ is defined as
    \begin{equation}
 \| f \| := \sqrt{\sum_{i = 0}^n | a_i |^2} .
   \end{equation}
   Given $x \in \mathbb{C}$, we denote by
 $\mult(x,f)$ the \emph{multiplicity} of $x$ as a root of $f$.
  We denote by  $\mathrm{V}_{\mathbb{C}}(f)$
   the set of the distinct roots of $f$ in $\mathbb{C}$ so that
  $$f = \LCF(f) \cdot\prod_{x\in  \mathrm{V}_{\mathbb{C}}(f)} (X - x)^{\mult(x,f)}.$$

 The \emph{square-free part} $f^\star_\mathbb{C}$ of $f$ is defined as
    $$f^\star_\mathbb{C} = 
    \prod_{x\in  \mathrm{V}_{\mathbb{C}}(f)} (X - x).$$

  The \emph{Mahler measure} of $f$ is defined as
      \begin{equation}\label{mahler}
      \Mea (f) := | \LCF(f) | \cdot\prod_{x\in  \mathrm{V}_{\mathbb{C}}(f)} \max (1, | x|)^{\mult(x,f)}.    
      \end{equation}
      Note the multiplicativity of  the Mahler measure 
       \begin{equation} \label{multiplicative}
       \Mea (f\cdot g)=\Mea (f)\cdot \Mea (g).
       \end{equation}

The norm of $f$, its length and its Mahler measure are related as follows  (see for example \cite[Propositions 10.8 and 10.9]{BPRbook2}):
  \begin{equation}
  \label{comparebis}
  2^{-n} \Len (f) \leqslant \Mea (f) \leqslant \| f \| .
   \end{equation}
  The \emph{separation of $f$ at $x$} is defined as
  \begin{equation}
 \sep(x,f):= \min_{y \in\mathrm{V}_{\mathbb{C}}(f), \atop  y \not{=} x} |y-x |.
   \end{equation}
 \end{definition}

Given a domain $\mathbb{D}$ and  two polynomials $f,g$ in $\mathbb{D}[X]$ of respective degrees $n,n'$, we denote by $\res(f,g)$ their resultant and by $\sr_{k}(f,g)$ their $k$-th subresultant coefficient, with  $\sr_{0}(f,g)=(-1)^{n(n-1)/2}\res(f,g)$ (see for example \cite[Section 4.2]{BPRbook2}).
If $\mathbb{D}=\mathbb{C}$  it is well known (see for example \cite[Theorem 4.16]{BPRbook2}) that
 \begin{equation}\label{equ:res}
\res(f,g)=\LCF(f)^{n'} \prod_{x \in V_{\mathbb{C}}(f)} g(x)^{\mult(x,f)}.
  \end{equation}
 The \emph{discriminant}  $\Disc(f)$ is the element of $\mathbb{D}$ such that 
  \begin{equation}\label{equ:dis}
   \LCF(f)\cdot \Disc(f)=\sr_{0}(f,f')=(-1)^{n(n-1)/2}\res(f,f').
   \end{equation}
   If $\mathbb{D}=\mathbb{C}$, denoting by $x_1,\ldots, x_{\deg(f)}$ its complex roots  
    \begin{equation}\label{equ:disc}
\Disc(f)=\prod_{i<j } (x_i-x_j)^2
  \end{equation}
   (see for example \cite[Sections 4.1 and 4.2]{BPRbook2}).
   
   If $\varphi$ is a ring homomorphism between  $\mathbb{D}$ and  $\mathbb{D'}$, the following properties are  easy to prove
   \begin{itemize}
   \item if $\varphi(a_n)\not=0$, $\varphi(\Disc(f))=\Disc(\varphi(f))$, 
   \item if $\varphi(a_n)=0$, $\varphi(\Disc(f))=\varphi(a_{n-1})^2 \cdot \Disc(\varphi(f))$, 
  \end{itemize}

 The \emph{$k$-th subdiscriminant coefficient} $\sDisc_{k}(f)$ is the element of $\mathbb{D}$ such that 
   \begin{equation}\label{equ:subdis}
   \LCF(f)\cdot \sDisc_{k}(f)=\sr_{k}(f,f').
     \end{equation}
If $\mathbb{D}=\mathbb{C}$,  
$m$ is the biggest index such that 
$\sDisc_{n-m}(f)\not= 0$ 
if and only if $f$ has
$m$
 distinct roots in $\mathbb{C}$ (see for example \cite[Sections 4.1 and 4.2]{BPRbook2}).

Now we introduce the generalized discriminant which will play a key role in the statement of several of our quantitative results.
 We are not aware of a clear identification of this notion earlier in the literature.
 
 \begin{notation} \label{not:fk}
 Let $f  \in \mathbb{C} [X]$ be a polynomial of degree $n$.
 Defining
\begin{equation}
f^{[i]}:=\frac{f^{(i)}}{i!} \label{fk},
\end{equation}
the Taylor formula writes as
$$f(\alpha+X)=\sum_{i=0}^{n} f^{[i]}(\alpha)X^{i}$$
\end{notation}

  \begin{definition} \label{def:gdisc}
     Let $\mathbb{D}$ be a domain, and let $f$  in $\mathbb{D}[X]$ of degree $n$.
The \emph{generalized discriminant} of $f$,   $\gdisc(f)$ is the element of $\mathbb{D}$
such that
$$ \LCF(f)\cdot \gdisc(f)=\tcoeff(\res_X(f,\sum_{k=1}^n U^{k-1}f^{[k]} ))$$  where $\tcoeff(g)$ is the tail coefficient of the polynomial $g\in\mathbb{D}[U]$, i.e. the coefficient of its non-zero term of lowest degree in $U$. 
   \end{definition}
   
   Notice that the  generalized discriminant is never $0$.  In the special case where all the roots of $f$ are simple we have:
   \begin{equation}
    \gdisc(f)=(-1)^{n(n-1)/2}\cdot \Disc(f) 
   \end{equation}
  
     \begin{proposition} \label{prop:gdisc}
    If $\mathbb{D}=\mathbb{C}$, 
 $$\begin{array}{rcl} \gdisc(f) &= &\displaystyle{\LCF(f)^{n-2}\cdot
  \prod_{x \in V_{\mathbb{C}}(f)} f^{[\mult(x,f)]}(x)^{\mult(x,f)}}\\
  &=&\displaystyle{ \LCF(f)^{2n-2}\cdot
  \prod_{x,y \in V_{\mathbb{C}}(f),x\not= y} (x-y)^{\mult(x,f) \mult(y,f)}}.
  \end{array}.$$ 
 \end{proposition}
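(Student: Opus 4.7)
The plan is to unfold the definition of $\gdisc(f)$ using the Poisson product formula for the resultant, and then identify the lowest-order term in $U$ by a Taylor expansion at each root of $f$.

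First, I would rewrite $g(X,U) := \sum_{k=1}^n U^{k-1} f^{[k]}(X)$ using Taylor's formula from Notation~\ref{not:fk}: we have $f(X+U) = \sum_{k=0}^n f^{[k]}(X)\, U^k = f(X) + U\cdot g(X,U)$, so $g(X,U) = (f(X+U)-f(X))/U$. Viewed as a polynomial in $X$ with coefficients in $\mathbb{C}[U]$, $g$ has degree $n-1$ in $X$ with leading coefficient $n\cdot\LCF(f)$, coming from the term $f^{[1]}(X)=f'(X)$, which is a non-zero element of $\mathbb{C}\subset\mathbb{C}[U]$. The Poisson product formula~\eqref{equ:res} therefore yields
$$\res_X(f,g) \;=\; \LCF(f)^{n-1}\prod_{z\in V_{\mathbb{C}}(f)} g(z,U)^{\mu(z)},$$
where $\mu(z):=\mult(z,f)$.

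Next, I would analyse each factor $g(z,U)$ at a root $z$ of $f$. By the characterisation of multiplicity in Definition~\ref{def0}, $f^{[k]}(z)=0$ for $k<\mu(z)$ and $f^{[\mu(z)]}(z)\neq 0$, so
$$g(z,U) \;=\; U^{\mu(z)-1}\bigl(f^{[\mu(z)]}(z) + U\, f^{[\mu(z)+1]}(z) + \dots + U^{n-\mu(z)} f^{[n]}(z)\bigr).$$
Multiplying these contributions produces the global factor $U^{\sum_z \mu(z)(\mu(z)-1)}$, and since every $f^{[\mu(z)]}(z)$ is non-zero the tail coefficient of the product is just the product of the tail coefficients:
$$\tcoeff\bigl(\res_X(f,g)\bigr) \;=\; \LCF(f)^{n-1}\prod_{z\in V_{\mathbb{C}}(f)} f^{[\mu(z)]}(z)^{\mu(z)}.$$
Dividing by $\LCF(f)$ and taking absolute values yields the first equality of the proposition.

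Finally, for the second equality I would write $f(X) = (X-z)^{\mu(z)}\,h_z(X)$ with $h_z(X) := \LCF(f)\prod_{z'\neq z}(X-z')^{\mu(z')}$, and read off the coefficient of $(X-z)^{\mu(z)}$ in the Taylor expansion of $f$ around $z$ to obtain $f^{[\mu(z)]}(z) = h_z(z) = \LCF(f)\prod_{z'\neq z}(z-z')^{\mu(z')}$. Substituting this into the first equality and using $\sum_z \mu(z) = n$ merges $|\LCF(f)|^{n-2}$ with the extra $|\LCF(f)|^{\mu(z)}$ per root to produce $|\LCF(f)|^{2n-2}$, giving the claimed product form over ordered pairs $(z,z')$ with $z\neq z'$. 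The only step requiring real care is the bookkeeping of the power of $U$ when extracting $\tcoeff$ from the product; this is clean precisely because each tail coefficient $f^{[\mu(z)]}(z)$ is non-zero, so no unexpected cancellations arise.
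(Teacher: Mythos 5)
Your proof is correct and follows essentially the same route as the paper's: apply the Poisson product formula \eqref{equ:res} to $\res_X(f,\sum_k U^{k-1}f^{[k]})$, observe that the tail coefficient in $U$ of each factor $g(z,U)^{\mu(z)}$ is $f^{[\mu(z)]}(z)^{\mu(z)}$, and obtain the second product form from $f^{[\mu(z)]}(z)=\LCF(f)\prod_{z'\neq z}(z-z')^{\mu(z')}$. The paper merely states these two facts as "clear", whereas you supply the (correct) bookkeeping of degrees, leading coefficients and powers of $U$.
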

 
 \begin{proof}
 The equality 
$$\LCF(f)^{n-2}\cdot
  \prod_{x \in V_{\mathbb{C}}(f)} f^{[\mult(x,f)]}(x)^{\mult(x,f)}=  \LCF(f)^{2n-2}\cdot
  \prod_{x,y \in V_{\mathbb{C}}(f), x\not= y} (x-y)^{\mult(x,f) \mult(y,f)},$$  is clear.
  The equality  $$\gdisc(f) = \LCF(f)^{n-2}\cdot
  \prod_{x \in V_{\mathbb{C}}(f)} f^{[\mult(x,f)]}(x)^{\mult(x,f)}$$
  follows from the fact that
  $$\res_X(f,\sum_{k=1}^n U^{k-1}f^{[k]} )=\LCF(f)^{n-1}\cdot
  \prod_{x \in V_{\mathbb{C}}(f)} \sum_{k=1}^n U^{k-1}f^{[k]}(x)^{\mult(x,f)}$$
  by (\ref{equ:res}).
 \end{proof}

\begin{definition}
\label{def}
  For a
  non-zero
   polynomial $f \in \mathbb{C} [X]$, we define
 
 \begin{eqnarray*}
  \loglen (f) & := & \log(\Len(f))
  ,\\
 \logmea (f) & := & 
  \log(|\LCF(f)|)+ \sum_{x\in \mathrm{V}_{\mathbb{C}}(f)}\mult(x,f)\cdot \log (\max (1, |x|))
  ,\\
\logsep (f) & := & 
 \sum_{x\in  \mathrm{V}_{\mathbb{C}}(f)} \mult(x,f)\cdot |\log (\sep (x, f))|
 ,\\
 \logGdisc(f) &:=&
 \sum_{x\in \mathrm{V}_{\mathbb{C}}(f)} \mult(x,f) \cdot
 |\log (|f^{[\mult(x,f)]}(x)|)|.
 \end{eqnarray*}
 \end{definition}

\begin{remark}
Notice that while
 \begin{eqnarray*}
  \loglen (f) & = & \log(\Len(f))
  ,\\
 \logmea (f) & = & \log(\Mea(f))
  \end{eqnarray*}
  we have, because of the absolute values,
 \begin{eqnarray*}
\logsep (f) & \ge & 
  \log(\sep(f))
 ,\\
 \logGdisc(f) &\ge &
  \log(\gdisc(f)).
 \end{eqnarray*}
\end{remark}

  As a consequence of (\ref{comparebis}),
\begin{equation}
\label{comparelog30}
\loglen (f) \le \logmea (f)+\deg(f).
\end{equation}

In the next step, we give a bound for $\logsep(f)$ relating it to 
$\logmea(f)$,
  and  $\logGdisc(f)$.

\begin{proposition}\label{GammaDeltaSigma}
Let $f\in\C[X]$ be a polynomial of degree $n$.
If $|\LCF(f)| \ge 1$,

\begin{equation}\label{GammaDeltaSigmab}
\logsep(f)\in O(n(n+
\logmea(f))+\logGdisc(f))
\end{equation}
\end{proposition}

For the proof of Proposition~\ref{GammaDeltaSigma}, we need the following lemma:

\begin{lemma}\label{6neighbours}
Let $S$ be a finite subset of $\mathbb{C}$. Consider a mapping $\phi: S\mapsto  S$ that maps any element  $x$ of $S$ to an arbitrary element $y\in S\setminus\{x\}$ that minimizes the distance to $x$. Then, each element $y\in S$ has at most $6$ preimages under the mapping $\phi$.
\end{lemma}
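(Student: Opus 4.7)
The plan is to use a classical angular packing argument in the Euclidean plane. I want to show that any two distinct preimages of $z'$ under $\phi$ subtend an angle of at least $\pi/3$ at $z'$. Since the angles around $z'$ sum to $2\pi$, this will immediately yield the bound $2\pi/(\pi/3) = 6$ on the number of preimages.

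The key geometric step is as follows. Let $z_1, z_2 \in S \setminus \{z'\}$ be two distinct preimages of $z'$, so $\phi(z_1) = \phi(z_2) = z'$. By the very definition of $\phi$, the point $z'$ minimizes the distance from $z_1$ to $S \setminus \{z_1\}$, and likewise for $z_2$. In particular, taking $z_2$ (resp.\ $z_1$) as a competitor in $S \setminus \{z_1\}$ (resp.\ $S \setminus \{z_2\}$), I obtain
\[
|z_1 - z'| \le |z_1 - z_2| \quad\text{and}\quad |z_2 - z'| \le |z_1 - z_2|.
\]
Thus in the triangle with vertices $z_1, z_2, z'$, the side opposite $z'$ is the longest. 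A direct application of the law of cosines then shows that the angle $\theta$ at $z'$ satisfies
\[
\cos \theta = \frac{|z_1-z'|^2 + |z_2-z'|^2 - |z_1-z_2|^2}{2\,|z_1-z'|\,|z_2-z'|} \le \frac{\min(|z_1-z'|,|z_2-z'|)}{2\,\max(|z_1-z'|,|z_2-z'|)} \le \frac12,
\]
so $\theta \ge \pi/3$.

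To conclude, I would enumerate the preimages $z_1, \dots, z_k$ of $z'$ cyclically according to their arguments around $z'$, and apply the angular bound above to consecutive pairs. The angles between consecutive preimages (viewed from $z'$) sum to $2\pi$ and each is at least $\pi/3$, which forces $k \le 6$.

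The only real subtlety I anticipate is handling the degenerate situation in which two preimages are collinear with $z'$ on the same side, making the angle between them zero; this however is ruled out by the same $\pi/3$ lower bound, so it never actually occurs for distinct preimages. Apart from this, the argument is elementary, and the constant $6$ is tight as witnessed by the centre and vertices of a regular hexagon of unit side length.
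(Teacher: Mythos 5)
Your proof is correct and follows essentially the same route as the paper's: both arguments observe that for two preimages $z_1,z_2$ of $z'$ the minimality of $\phi$ forces $|z_1-z_2|\ge\max(|z_1-z'|,|z_2-z'|)$, so the angle at $z'$ is at least $2\pi/6$, and the angular packing around $z'$ then bounds the number of preimages by $6$. You merely make explicit (via the law of cosines and the cyclic enumeration) the steps the paper leaves implicit, and you note the same hexagon example for tightness.
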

\begin{proof}
As the distance between two preimages must be greater than
 or equal to 
 the distances between $y$ and each of its preimages, the claim follows directly from the fact that if $OAB$ is a triangle and $AB\ge\max(OA,OB)$, then the angle 
$\widehat{BOA}$ has a measure at least $2\pi/6$. The value 6 is obtained in the case of a regular hexagon and its center.
\end{proof}

\begin{proof}[Proof of Proposition \ref{GammaDeltaSigma}]
Suppose that $f$ is  monic.
We first prove that, for any root $x$ of $f$,
it holds 
\begin{equation}\label{GDS}
\sep(x,f)^{\mult(x,f)}>|f^{[\mult(y,f)]}(y)|\cdot 2^{-n}\cdot\max(1,|y|)^{-n}\cdot 
\Mea (f) ^{-1}
\end{equation}
where $y\neq x$ is an arbitrary root of $f$ that minimizes the distance to $x$. The inequality 
(\ref{GDS}) follows directly from the following computation:
\begin{align*}
|f^{[\mult(y,f)]}(y)|&=
\prod_{
z \in \mathrm{V}_{\mathbb{C}}(f)\setminus\{y\} }|z-y|^{\mult(z,f)}\\
&=\sep(x,f)^{\mult(x,f)}\cdot
\prod_{
z \in \mathrm{V}_{\mathbb{C}}(f)\setminus\{x,y\} }|z-y|^{\mult(z,f)}\\
&\le \sep(x,f)^{\mult(x,f)}\cdot\Mea(f(X+y)).
\end{align*}
Using the multiplicativity of the Mahler measure (see \ref{multiplicative}) it is easy to prove that
\begin{align*}
\Mea(f(X+y))&\le 2^n\cdot \Mea (f) \cdot\max(1,|y|)^n.
\end{align*}
by checking the case of a monic polynomial of degree $1$.
Using Lemma \ref{6neighbours} and its notation,
each element $y\in \mathrm{V}_{\mathbb{C}}(f)$ has at most $6$ preimages under the mapping $\phi$. We thus conclude that
\begin{align*}
&\prod_{x \in \mathrm{V}_{\mathbb{C}}(f)
} \sep(x,f)^{\mult(x,f)}\\
&\quad>2^{-n^2}\cdot  
\Mea(f)^{-n}\cdot \prod_{x \in \mathrm{V}_{\mathbb{C}}(f)
}\max(1,|\phi(x)|)^{-n}\cdot \prod_{x \in \mathrm{V}_{\mathbb{C}}(f)
} |f^{[\mult(\phi(x),f)]}(\phi(x))|\\
&\quad \ge 2^{-n^2}\cdot 
\Mea(f)^{-n}\cdot\prod_{x \in \mathrm{V}_{\mathbb{C}}(f)} \max(1,|x|)^{-6n}\cdot\prod_{x \in  \mathrm{V}_{\mathbb{C}}(f)
}
\min(1,|
f^{[\mult(\phi(x),f)]}(\phi(x))|)
\\
&\quad \ge 2^{-n^2}\cdot 
\Mea(f)^{-n}\cdot 
 \Mea(f)^{-6n}
\cdot\prod_{x \in \mathrm{V}_{\mathbb{C}}(f)} \min(1,|f^{[\mult(x,f)]}(x)|)^6,
\end{align*}
which shows that 
$$-\sum_{x \in \mathrm{V}_{\mathbb{C}}(f)\atop \sep(x,f)<1}\mult(x,f)\cdot \log (\sep(x,f))\in O(n(n+
\logmea(f))+\logGdisc(f)).$$ 
 It remains to estimate 
 $$\sum_{x \in \mathrm{V}_{\mathbb{C}}(f)\atop \sep(x,f)\ge 1}\mult(x,f)\cdot \log (\sep(x,f)).$$
  It holds that 
\begin{equation} \label{usefulsep}
\sep(x,f)=|x-\phi(x)|\le 2 \cdot\max(1,|x|)\cdot\max(1,|\phi(x)|).
\end{equation}
 Thus, we get 
\begin{align*}
 \prod_{x \in \mathrm{V}_{\mathbb{C}}(f)\atop  \sep(x,f)\ge 1} 
 \sep(x,f)^{\mult(x,f)}&\le 2^{n}\cdot \Mea(f)^{7}
\end{align*}
and finally
\begin{equation}\label{GDS1}
\sum_{x:\sep(x,f)>1} \mult(x,f)\cdot \log(\sep(x,f))\in O(n + \logmea(f)).
\end{equation}
For the non-monic case, the claim follows from the result in the monic case and the fact that if $|\alpha|\ge 1$
\begin{itemize}
\item $\logmea(f)\le \logGdisc(\alpha f)$,
\item $\logGdisc(f)\le \logGdisc(\alpha f)$.
\end{itemize}
\end{proof}

\subsection{Quantitative results for univariate polynomials with integer coefficients}

For polynomials $f$ with integer coefficients, we introduce the notation of the \emph{magnitude} of $f$.

\begin{definition}
 A polynomial $f\in \Z[X]$ is of \emph{magnitude} 
 bounded by
  $(n,\tau)$ if its degree is bounded by $n$ and its bitsize is bounded by $\tau$, that is, the absolute value of each of its coefficients is bounded by $2^\tau$.
\end{definition}

 If $f\in \Z[X]$ is of magnitude 
 bounded by
 $(n,\tau)$,
then
$f^{[k]}$ is of magnitude
bounded by $(n,\tau+n)$ (using Notation \ref{not:fk}), using that $\binom{n}{k}<2^n$ for all $k=0,1,\ldots,n$.

It is clear that
 \begin{equation}
 \label{comparelen}
\Len(f)\le (n+1)\cdot 2^{\tau}.
 \end{equation}

Moreover, (\ref{comparebis}) implies
\begin{equation}
\label{compare}
\Mea (f)
  \leqslant  \sqrt{n+1}\cdot 2^\tau,
\end{equation}
and thus
\begin{equation}
\label{comparelog1} \logmea (f) \in {O} (\tau+\log (n)).
\end{equation}

Our next aim is to give bounds on $\logsep(f)$ that depend on the magnitude $(n,\tau)$ of $f$. In Proposition \ref{GammaDeltaSigma}, we have already derived a bound on $\logsep(f)$ that is related to $n$,
$\logmea(f)$
 and $\logGdisc(f)$.
 Given (\ref{comparelog1}) 
 we are left to bound $\logGdisc(f)$ in terms of $n$ and $\tau$. For this, we first derive a general bound on the product of the absolute values that a sequence of integer polynomials $g_1,\ldots,g_m$ takes at corresponding roots of $f$. A corresponding bound on $\logGdisc$ will then follow by applying our result to the sequence 
 of $f^{[k]}$, $k=\deg(f)-1,\ldots, 0$.

\begin{proposition}
\label{generalunivariate}
Let $f$ be a 
non-zero
polynomial in $\Z[X]$
 of magnitude 
bounded by
$(n_1,\tau_1)$ 
 and let $(g_i)_{1 \leq i \leq m}$ be a sequence of 
 non-zero
 polynomials in $\Z[X]$ each
  of magnitude 
bounded by
$(n_2,\tau_2)$.
\begin{itemize}
\item[a)]
Let $A \subset  \mathrm{V}_{\mathbb{C}}(f)$ be
such that for each $x\in A$, there exists $i(x)\in\{1,\ldots,m\}$  with $g_{i(x)}(x)\not=0$. Then, 
it holds
 \begin{equation}\label{boundgeneralunivar}
 \sum_{x\in A} \mult(x,f)
     \log (|g_{i(x)} (x)|) \in \tilde O(n_1\tau_2+n_2\tau_1).
    \end{equation}
    
    \item[b)] Suppose moreover that, for every root
 $x\in  \mathrm{V}_{\mathbb{C}}(f)$,
there exists an $i$ such that $g_i(x)\not=0$. Denoting by $i(x)$ the smallest value of $i$ such 
 that $g_i(x)\not=0$, it holds
 \begin{equation}\label{boundgeneraluniv}
 \sum_{x\in  \mathrm{V}_{\mathbb{C}}(f)}
    \mult(x,f) |\log (|g_{i(x)}(x)|) | \in \tilde O (n_1 \tau_2+n_2 \tau_1).
    \end{equation}
    \end{itemize}
\end{proposition}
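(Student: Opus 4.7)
The plan is to prove (a) by a pointwise application of Lemma \ref{subst1}, and to derive (b) by combining the upper bound from (a) with a resultant-based lower bound on the product $\prod_{z \in V_\C(f)} |g_{i(z)}(z)|^{\mu(z)}$, obtained by mimicking the construction of $\gdisc$ in Definition \ref{def:gdisc}.

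For (a), Lemma \ref{subst1} yields $|g_{i(z)}(z)| \leq \Len(g_{i(z)}) \cdot \max(1, |z|)^{n_2}$ for every $z \in A$. Taking logarithms, weighting by $\mu(z)$, and summing over $z \in A$, the first contribution is at most $\bigl(\sum_{z \in A} \mu(z)\bigr) \cdot \max_i \log \Len(g_i) \leq n_1 \log((n_2+1) 2^{\tau_2})$, while the second is bounded by $n_2 \cdot \sum_{z \in V_\C(f)} \mu(z) \log \max(1, |z|) = n_2 \cdot \logmeah(f) \in O(n_2(\tau_1 + \log n_1))$ using Proposition \ref{compareMahlerCoeff}. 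Adding the two gives the claimed $\tilde O(n_1 \tau_2 + n_2 \tau_1)$ upper bound.

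For (b), I first bound the positive contribution $P := \sum_{z:\,|g_{i(z)}(z)| \geq 1} \mu(z) \log|g_{i(z)}(z)|$ by applying (a) to the subset $A := \{z \in V_\C(f) : |g_{i(z)}(z)| \geq 1\}$. For the matching lower bound on $\sum_z \mu(z) \log|g_{i(z)}(z)|$, I would introduce
\[
G(U, X) := \sum_{i=1}^m U^{i-1} g_i(X) \in \Z[U, X],
\]
viewed as a polynomial in $X$ of degree $N \leq n_2$ over $\Z[U]$. By the assumption of (b) and the definition of $i(z)$, we have
\[
G(U, z) = U^{i(z)-1}\bigl(g_{i(z)}(z) + U\, g_{i(z)+1}(z) + U^2 g_{i(z)+2}(z) + \cdots\bigr)
\]
for every $z \in V_\C(f)$, so each $G(U, z)$ is a nonzero element of $\C[U]$. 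This implies that $\res_X(f, G) \in \Z[U]$ is nonzero, and expanding $\prod_z G(U, z)^{\mu(z)}$ around $U = 0$ together with (\ref{equ:res}) shows that the tail coefficient of $\res_X(f, G)$ equals $\LCF(f)^N \prod_{z \in V_\C(f)} g_{i(z)}(z)^{\mu(z)}$. Since this tail coefficient is a nonzero integer, it has absolute value at least $1$, and therefore
\[
\prod_{z \in V_\C(f)} |g_{i(z)}(z)|^{\mu(z)} \geq |\LCF(f)|^{-N} \geq 2^{-n_2 \tau_1},
\]
which gives $-\sum_z \mu(z) \log|g_{i(z)}(z)| \leq n_2 \tau_1$.

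To conclude, I would write $\sum_z \mu(z) |\log|g_{i(z)}(z)|| = P + Q$ with $Q := -\sum_{z:\,|g_{i(z)}(z)|<1} \mu(z)\log|g_{i(z)}(z)|$. Since $Q = -\sum_z \mu(z)\log|g_{i(z)}(z)| + P \leq n_2 \tau_1 + \tilde O(n_1\tau_2 + n_2\tau_1)$, part (b) follows. The main technical obstacle is the correct identification of the tail coefficient of $\res_X(f, G)$ together with the verification that this resultant does not vanish identically in $U$; both rely on carefully expanding $\prod_z G(U, z)^{\mu(z)}$ near $U = 0$ using the vanishing pattern prescribed by the indices $i(z)$.
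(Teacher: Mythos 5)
Your proposal is correct and follows essentially the same route as the paper: part (a) via Lemma \ref{subst1} and the Mahler-measure bound, and part (b) via the auxiliary polynomial $G(U,X)=\sum_i U^{i-1}g_i(X)$, the identification of the tail coefficient of $\res_X(f,G)$ as $\LCF(f)^{N}\prod_z g_{i(z)}(z)^{\mu(z)}$ (a nonzero integer), and the split of the sum into positive and negative contributions. The only cosmetic difference is that you phrase the final combination directly rather than invoking the paper's elementary Lemma \ref{elem}, but the argument is the same.
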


\begin{proof}
\noindent a) 
 For any $x \in A,$ we have
\begin{equation}
\label{basis}
\left|g_{i(x)}(x)\right|\leq2^{\tau'_2}\cdot \max(1,\left|x\right|)^{n_2};
\end{equation}
by applying (\ref{subst1}) and
(\ref{comparelen})
, with $\tau'_2$ defined by  $2^{\tau'_2}=(n_2+1)2^{\tau_2}$.
Then, 
$$
\begin{array}{rcl}
\prod_{x \in A}\left|g_{i(x)}(x)\right|^{\mult(x,f)}  &\leq &\prod_{x\in A} 2^{\tau'_2 \mult(x,f)}\cdot \max(1,\left|x\right|)^{n_2 \mult(x,f)} \\
&\leq &\prod_{x\in  \mathrm{V}_{\mathbb{C}}(f)} 2^{\tau'_2 \mult(x,f)}\cdot \max(1,\left|x\right|)^{n_2 \mult(x,f)},
\end{array}
$$
using (\ref{basis}).
Finally, we obtain
\begin{equation}
\begin{array}{rcl}
\label{less}
\prod_{x\in A}\left|g_{i(x)}(x)\right|^{\mult(x,f)} &\leq& 2^{\tau'_2 \sum \mult(x,f)} 
\Mea(f)
^{n_2} \\ 
&\in & 2^{O(n_1 \log (n_2)+n_2 \tau_1+n_1 \tau_2+n_2 \log (n_1))},
\end{array}
\end{equation}

as $\sum_{x\in A} \mult(x,f)\leq n_1$, 
$\Mea(f) \leq 2^{\tau_1+\log (n_1+1)}$, and  $2^{\tau'_2}=(n_2+1)2^{\tau_2}$.\medskip

\noindent b) We want to prove that 
\begin{equation}
\label{greater}
\prod_{x\in  \mathrm{V}_{\mathbb{C}}(f)}\left|g_{i(x)}(x)\right|^{\mult(x,f)}\in 2^{-O(\tau_1 n_2)}.
\end{equation}
We set $$g (X,U)= g_1 (X)+ U g_2(X)+\cdots +U^{m-1} g_{m} (X)$$ and consider
$$\res_X(f,g)= \LCF(f)^{n_2}\cdot\prod_{x\in  \mathrm{V}_{\mathbb{C}}(f)}g(x,U)^{\mult(x,f)}.$$
 The polynomial $\res_X(f, g)$ is then a polynomial in $U$ with integer coefficients. The coefficient of its term of lowest degree, which is a non-zero  integer
 has absolute value equal to $$|\LCF(f)|^{n_2}\cdot \prod_{x\in  \mathrm{V}_{\mathbb{C}}(f)}\left|g_{i(x)}(x)\right|^{\mult(x,f)}.$$
In particular, this shows that the latter term has absolute value at least $1$.
It is further clear that 
$$|\LCF(f)|^{n_2}\in 2^{O(\tau_1 n_2)},$$
and thus (\ref{greater}) holds.

We now define $$A:=\lbrace x | f(x)=0, |g_{i(x)}(x)|\geq 1 \rbrace.$$ 
Since for any $x \in A,~\left|g_{i(x)}(x)\right| \geq 1,$ it is clear by (\ref{less}) that

\begin{equation}\label{+OA}
0\le \sum_{x \in A}\mult(x,f)\cdot \log (|g_{i(x)}(x)|)\le c \in O(n_1 \tau_2+n_2 \tau_1+n_2 \log (n_1)+n_1 \log (n_2))
\end{equation}

It follows by (\ref{greater})
that 
\begin{equation}\label{-OB}
\sum_{x\in  \mathrm{V}_{\mathbb{C}}(f)}-\mult(x,f) \cdot \log (|g_{i(x)}(x)|)\in O(n_2 \tau_1).
\end{equation}

Hence, 
we obtain
\begin{equation}\label{-OBB}
\sum_{x\in  \mathrm{V}_{\mathbb{C}}(f)} \mult(x,f) |\log (|g_{i(x)}(x)|)|\in O(n_2 \tau_1+n_1 \tau_2).
\end{equation}
\end{proof}

The following result has already been proven in \cite{KoS}
 but we give here a
 much
  simpler proof based on Proposition \ref{generalunivariate}.

\begin{proposition}
\label{corodisc}
Let $f\in\Z[X]$ be a 
non-zero
polynomial of magnitude
bounded by
 $(n,\tau)$, then
 $\logsep(f)\in\tO(n\tau+n^2).$
 \end{proposition}

\begin{proof}
Using Proposition \ref{generalunivariate} b) 
for the sequence $f^{[k]}$, $k=\deg(f)-1,\ldots,0$, we obtain
$$\logGdisc(f)\in\tO(n\tau+n^2).$$
Since  
$\logmea(f)\in O(\tau+\log (n))$
by
 (\ref{comparelog1}), we conclude by 
 Proposition \ref{GammaDeltaSigma}.
\end{proof}

Hereafter, we recall some quantitative and complexity results which will be used in the
complexity analysis of our algorithms.

\begin{proposition}
 \label{Mignotte}(see for example \cite{BPRbook2}) Let 
  $f\in \Z[X]$
  be a non-zero polynomial
  of magnitude
bounded by
 $(n,\tau)$ and $g\in \Z[X]$
 a non-zero polynomial 
  of degree $n_{1}$ dividing $f$.
 Then, $g$ is of magnitude 
bounded by
$(n_1,n_{1} + \tau + \log (n +
 1))$.
\end{proposition}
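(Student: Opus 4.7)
The plan is to bound the coefficients of $g$ by relating them, via the Mahler measure, to those of $f$. The Mahler measure behaves well both under division (via its multiplicativity) and under the transition between coefficient bounds and bounds on roots, so it is the right intermediate quantity.

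First I would recall that $\Mea$ is multiplicative on products: for $f = g \cdot h$ with $g,h \in \mathbb{C}[X]$, one has $\Mea(f) = \Mea(g)\cdot \Mea(h)$, which follows directly from the product formula $\Mea(f)=|\LCF(f)|\prod_{z\in V_{\C}(f)}\max(1,|z|)^{\mult(z,f)}$ in Definition~\ref{def0}, since the roots of $f$ are the disjoint union (with multiplicity) of the roots of $g$ and $h$, and the leading coefficients multiply. Applied to $h := f/g \in \Z[X]$, which is a nonzero integer polynomial, this gives $\Mea(g)\le \Mea(f)$, because the leading coefficient of $h$ is a nonzero integer (so $|\LCF(h)|\ge 1$) and each factor $\max(1,|z|)$ in $\Mea(h)$ is $\ge 1$, whence $\Mea(h)\ge 1$.

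Next I would chain the two pairs of inequalities provided earlier. By Proposition~\ref{compareMahlerCoeff0} applied to $g$, the coefficients of $g$ are bounded in absolute value by $\Len(g)\le 2^{n_1}\Mea(g)$. By Proposition~\ref{compareMahlerCoeff} applied to $f$, we have $\Mea(f)\le \sqrt{n+1}\cdot 2^\tau$. Combining with $\Mea(g)\le \Mea(f)$ yields, for every coefficient $b_i$ of $g$, the estimate
\[
|b_i|\ \le\ \Len(g)\ \le\ 2^{n_1}\cdot \Mea(g)\ \le\ 2^{n_1}\cdot \sqrt{n+1}\cdot 2^\tau.
\]
Taking base-$2$ logarithms gives $\log |b_i| \le n_1 + \tau + \tfrac{1}{2}\log(n+1) \le n_1 + \tau + \log(n+1)$, which is exactly the asserted bound on the bitsize of $g$. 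The degree bound $\deg g \le n_1$ is part of the hypothesis, so $g$ is indeed of magnitude $(n_1,\, n_1+\tau+\log(n+1))$.

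The only non-routine point is the multiplicativity of $\Mea$ together with the observation that $\Mea(h)\ge 1$ for a nonzero integer polynomial $h$; everything else is just plugging into already-stated inequalities. I do not foresee a real obstacle, since this is the standard Mignotte bound and the excerpt has already prepared every ingredient needed.
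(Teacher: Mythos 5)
Your proof is correct and is precisely the standard argument for the Mignotte factor bound that the paper invokes by citation (to \cite{BPRbook2}) without reproducing a proof: multiplicativity of the Mahler measure plus $\Mea(h)\ge 1$ for the integer cofactor $h=f/g$ gives $\Mea(g)\le\Mea(f)$, and the two comparison inequalities of Propositions~\ref{compareMahlerCoeff0} and~\ref{compareMahlerCoeff} convert this into the stated coefficient bound. All steps check out, including the implicit use that ``$g$ divides $f$'' is meant in $\Z[X]$ so that the cofactor has integer coefficients.
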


We need to give details about content and greatest common divisors (gcd) in the ring $\Z[X]$.

\begin{definition}
\label{def:contgcd}
Given  a non-zero-polynomial $f\in\Z[X]$
its
\emph{content} $\cont(f)$  is defined as the gcd in $\Z$ of the coefficients of $f$.  For every  $g\in \Z[X]$ dividing $f$, $\cont(g)$ divides $\cont(f)$ in $\Z$.

If $\cont(f)=1$, $f$ is content-free. Every  $g\in \Z[X]$ dividing a content-free polynomial is content-free.

Given 
two non-zero polynomials $f\in\Z[X]$ and 
$g\in\Z[X]$
their gcd $\gcd(f,g)$ is defined (up to sign) as the unique polynomial in $\Z[X]$ which is proportional to the gcd of $f$ and $g$ in $\Q[X]$ and is of content 
$\gcd(\cont(f),\cont(g))$.
 
 The content of the derivative $f'$ of $f$ is a multiple of the content of $f$, so $\cont(\gcd(f,f'))=\cont(f)$ and $f^\star=f/\gcd(f,f')$ its \emph{square-free part} is content-free.
\end{definition}

\begin{proposition}
 \label{gcd-comp}\cite{BLMPRS16,GG} Let $f, g \in \Z [X]$ be two 
 non-zero 
 polynomials of respective magnitude
bounded by
 $(n_{1},\tau_{1})$ and $(n_{2},\tau_{2})$. Computing
 their gcd has  bit complexity  $$\tO (\max(n_{1},n_{2})\cdot (n_{1}\tau_{2}+n_{2}\tau_{1})).$$
\end{proposition}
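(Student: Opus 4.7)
The plan is to proceed via a modular approach combined with fast polynomial arithmetic over finite fields, following the classical treatment of integer polynomial GCD (see~\cite{GG}). Write $h := \gcd(f,g) \in \Z[X]$ and $n := \max(n_1, n_2)$. The first step is to bound the size of the output. Since $h$ divides both $f$ and $g$, Proposition~\ref{Mignotte} gives that its bitsize is at most $\deg(h) + \min(\tau_1, \tau_2) + O(\log n)$. For reconstruction from modular images, however, one needs a bound on a suitably normalized associate (typically the last nonzero subresultant rather than the primitive gcd): a Hadamard-style argument on the Sylvester matrix of $f$ and $g$ shows that this normalized gcd has bitsize $\tO(n_1 \tau_2 + n_2 \tau_1)$, which also bounds the product of primes needed.

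Next, I would select primes $p_1, \ldots, p_N$ of bitsize $O(\log(n + \tau_1 + \tau_2))$ whose product exceeds the bound above; by the prime number theorem, $\sum_i \log p_i \in \tO(n_1 \tau_2 + n_2 \tau_1)$. For each prime $p_i$, reduce $f$ and $g$ modulo $p_i$ and compute $h_i := \gcd(f \bmod p_i, g \bmod p_i) \in \mathbb{F}_{p_i}[X]$ using the half-gcd algorithm, at a cost of $\tO(n)$ arithmetic operations in $\mathbb{F}_{p_i}$, which amounts to $\tO(n \log p_i)$ bit operations. Finally, filter out unlucky primes (those for which the modular reduction produces a spurious increase in degree) by retaining only those $h_i$ of minimal degree, then reconstruct $h$ from the surviving $h_i$ via the Chinese Remainder Theorem followed by rational number reconstruction.

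Summing over primes, the dominant cost is $\tO(n) \cdot \sum_i \log p_i = \tO(\max(n_1, n_2) \cdot (n_1 \tau_2 + n_2 \tau_1))$ bit operations, whereas the preliminary bound computation and the CRT step contribute only lower order terms. The main obstacle is the correct handling of unlucky primes: such a prime divides the leading coefficient of the first nonzero subresultant of $f$ and $g$, so their number is controlled by the bitsize of that subresultant, and a majority vote on $\deg h_i$ among the modular computations suffices to discard them before reconstruction. Combined with a careful use of FFT-based multiplication throughout (both for the modular gcd and for the CRT combining step), this yields the claimed complexity.
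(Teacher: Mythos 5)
The paper does not prove this proposition at all: it is quoted as a known result with a pointer to \cite{BLMPRS16,GG}, so there is no in-paper argument to compare yours against. Your sketch is essentially the standard modular proof from the cited literature (von zur Gathen--Gerhard style): Hadamard on the Sylvester matrix gives the $\tO(n_1\tau_2+n_2\tau_1)$ bound on the normalized gcd and on the product of primes needed, half-gcd gives $\tO(\max(n_1,n_2)\log p_i)$ bit operations per prime, and summing over primes yields the stated bound; the reduction of $f,g$ modulo all primes via a remainder tree and the CRT recombination indeed stay within the same budget. Two small points of hygiene: the correct filter for unlucky primes is \emph{minimal degree} of the modular gcd (which you state first), not a majority vote (which you mention later) --- for primes not dividing both leading coefficients the modular gcd degree can only overshoot, so the minimum is provably the true degree once at least one lucky prime is present; and since unlucky primes are discarded, you must choose enough primes at the outset so that the lucky ones alone have product exceeding the coefficient bound, which is guaranteed because the number of unlucky primes is bounded by the bitsize of the relevant principal subresultant coefficient divided by $\log p_i$. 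With those standard caveats your argument is sound and matches the proof in the cited sources.
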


\begin{proposition}
 \label{exact_division_comp}\cite[Ex.~10.21]{GG}  Let $f\in \Z [X]$ 
 be a non-zero
 polynomial of
 magnitude
bounded by
 $(n,\tau)$. Given a non-zero polynomial $g \in \Z [X]$
 that divides $f$, computing the quotient of $f$ divided by $g$
 has bit complexity of $\tO (n\tau+n^2)$.
\end{proposition}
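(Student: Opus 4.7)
The plan is to reduce exact polynomial division to a short chain of truncated polynomial multiplications, using the fact that the quotient itself has controlled bit-size. First, set $h := f/g \in \Z[X]$. Since $h$ is a divisor of $f$ (as $f = g \cdot h$ in $\Z[X]$), Proposition \ref{Mignotte} shows that $h$ has magnitude $(n', n' + \tau + \log(n+1))$ with $n' := \deg h \le n$. Hence each coefficient of $h$ has bitsize $O(n+\tau)$, and the total bitsize of $h$ is already $O(n(n+\tau)) = O(n\tau+n^2)$, matching the target bound up to polylogarithmic factors.

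Next I would apply the standard reversal trick. Writing $\tilde p(X) := X^{\deg p} p(1/X)$, the identity $f = gh$ yields $\tilde f(X) = \tilde g(X) \tilde h(X)$, so modulo $X^{n+1}$ one has
\[
\tilde h(X) \equiv \tilde f(X) \cdot \tilde g(X)^{-1} \pmod{X^{n+1}},
\]
where $\tilde g(0) = \LCF(g) \neq 0$. Thus the computation of $h$ reduces to (i) computing the truncated inverse $u := \tilde g^{-1} \bmod X^{n+1}$ and (ii) computing $\tilde f \cdot u \bmod X^{n+1}$ and reversing the result. The inverse $u$ is obtained by the classical Newton iteration
\[
u_{k+1} := u_k(2 - \tilde g\, u_k) \bmod X^{2^{k+1}},
\]
which doubles the precision at each step and so requires $O(\log n)$ polynomial multiplications whose degrees form a geometric progression bounded by $O(n)$.

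To control the bitsize of intermediate values I would perform all arithmetic modulo a prime $p$ of bitsize $\Theta(n+\tau)$ chosen large enough that every coefficient $h_i$ of $h$ satisfies $|h_i| < p/2$ and coprime to $\LCF(g)$; such a prime exists and can be found in $\tO(n+\tau)$ bit operations. Then the integer coefficients of $h$ are uniquely recovered from their residues modulo $p$. Each multiplication in $\mathbb{F}_p[X]$ of polynomials of degree $O(n)$ costs $\tO(n(n+\tau))$ bit operations via FFT-based arithmetic (or equivalently over $\Z$ through Kronecker substitution); summing over the $O(\log n)$ Newton steps and the single final product gives total cost $\tO(n(n+\tau)) = \tO(n\tau + n^2)$. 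The main obstacle is exactly this bit-complexity bookkeeping: a naive exact Newton iteration in $\Z[X]$ or $\mathbb{Q}[X]$ would introduce denominators that are powers of $\LCF(g)$ and inflate the intermediate operands beyond the target bound, and it is precisely the a priori output bound from Proposition \ref{Mignotte} that justifies the single-prime modular computation.
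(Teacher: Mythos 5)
The paper offers no proof of this proposition --- it is quoted directly from \cite[Ex.~10.21]{GG} --- so there is nothing internal to compare against; your argument is essentially the standard one behind that reference (a priori bound on the quotient via Proposition~\ref{Mignotte}, reversal, Newton iteration for the truncated inverse, and modular arithmetic to keep intermediate bitsizes at $O(n+\tau)$), and the structure and the cost accounting are sound. The one step you should not wave through is ``such a prime exists and can be found in $\tO(n+\tau)$ bit operations'': existence is clear (there are far more primes of bitsize $\Theta(n+\tau)$ than prime divisors of $\LCF(g)$), but \emph{deterministically} producing a prime of a prescribed bitsize within that budget is not a known routine fact, and the paper's algorithm is advertised as deterministic. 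Two standard repairs keep everything deterministic and within $\tO(n\tau+n^2)$: either replace the single large prime by a CRT computation over the first $O(n+\tau)$ primes (each of bitsize $O(\log(n+\tau))$, obtainable by sieving in $\tO(n+\tau)$ time), discarding those dividing $\LCF(g)$; or avoid modular arithmetic altogether via Kronecker substitution --- evaluate $f$ and $g$ at $X=2^{k}$ with $k=\Theta(n+\tau)$ chosen above twice the coefficient bound for $h$, perform one exact division of integers of bitsize $O(n(n+\tau))$, and read off the coefficients of $h$, which costs $\tO(n\tau+n^2)$ and is arguably the most economical route. With either fix your proof is complete.
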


\begin{corollary}  \label{square_free}
 Let $f\in \Z [X]$
 be a non-zero
 polynomial of
 magnitude
bounded by
 $(n,\tau)$.  computing the square-free part $f^\star$ of $f$ 
 has bit complexity 
 $\tO (n^2\tau)$.
\end{corollary}

 \begin{notation}
 \label{bitsize}
 Let $p=r/s$ a rational number with $\gcd(r,s)=1$.
  We denote by $\lambda(p)$ its bitsize, defined by the sum of the bitsizes of its numerator $r$ and denominator $s$.
    For an interval $I=[a,b]$, $a<b$ with rational endpoints, we denote by  $|I|=b-a$ its length and by $\lambda(I)$ the maximum of $\lambda(a)$ and $\lambda(b)$.
   \end{notation}

\begin{proposition}
 \label{univariate-evaluation}\cite{BaZa,KS15} Let 
 $f \in \Z [X]$
  be a non-zero
 polynomial of magnitude 
bounded by
$(n,\tau)$.
 Let $r$ a rational number of bitsize $\lambda(r)$. Then, the evaluation of $f$ at $r$
 can be performed using $\tO (n (\tau + \lambda(r)))$ bit operations and the bitsize of
 the output $f (r)$ is $\tO (\tau + n\cdot \lambda(r))$.
\end{proposition}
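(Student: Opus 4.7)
The plan is to combine a direct arithmetic bound on the bitsize of $f(r)$ with a divide-and-conquer evaluation scheme that leverages fast integer multiplication $M(N) = \tO(N)$. Writing $r = p/q$ in lowest terms so that $\lambda(p) + \lambda(q) \le \lambda(r)$, the starting observation is that
\[
q^n f(r) = \sum_{i=0}^{n} a_i\, p^i\, q^{n-i} \in \Z,
\]
so that $f(r)$ is a rational whose denominator divides $q^n$.

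For the bitsize bound, each summand $a_i p^i q^{n-i}$ has bitsize at most $\tau + i\,\lambda(p) + (n-i)\,\lambda(q) \le \tau + n\,\lambda(r)$, and hence the numerator $q^n f(r)$, being a sum of $n+1$ such integers, has bitsize at most $\tau + n\lambda(r) + \log(n+1) = \tO(\tau + n\lambda(r))$. Together with $\lambda(q^n) \le n\lambda(r)$ for the denominator, this gives $\lambda(f(r)) = \tO(\tau + n\lambda(r))$.

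For the complexity, I would evaluate $f(r)$ by a binary split: write $f = f_0 + X^{\lceil n/2\rceil} f_1$ with $f_0, f_1 \in \Z[X]$ of degree at most $\lceil n/2\rceil$ and bitsize at most $\tau$, recursively compute $f_0(r)$ and $f_1(r)$, and combine via $f(r) = f_0(r) + r^{\lceil n/2\rceil} f_1(r)$. The powers $r^{2^k}$ for $k = 0, \ldots, \lceil \log_2 n\rceil$ are precomputed once by repeated squaring; since $r^{2^k}$ has bitsize $O(2^k \lambda(r))$, the total cost of the precomputation is $\tO(n\lambda(r))$. At level $k$ of the recursion there are $2^k$ subproblems, each producing an output of bitsize $\tO(\tau + (n/2^k)\lambda(r))$, and the corresponding combining step consists of one multiplication by a precomputed power of $r$ followed by one addition, both costing $\tO(\tau + (n/2^k)\lambda(r))$ using fast integer arithmetic. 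Hence the total cost at level $k$ is $\tO(2^k\tau + n\lambda(r))$, and summing over $k=0,\ldots,\lceil\log_2 n\rceil$ yields $\tO(n\tau + n\lambda(r)) = \tO(n(\tau+\lambda(r)))$.

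The main obstacle is that a naive Horner scheme would accumulate the evaluation by multiplying a growing partial sum of bitsize up to $\tau + n\lambda(r)$ by $r$ in each of the $n$ steps, yielding a total bit complexity of $\tO(n(\tau + n\lambda(r)))$, which is off by a factor of $n$. The divide-and-conquer layout is essential because the large multiplications arise only near the top of the recursion tree where there are few of them, so that the geometric series over levels telescopes correctly; fast multiplication $M(N) = \tO(N)$ only hides polylogarithmic factors that are absorbed by the $\tO$ notation.
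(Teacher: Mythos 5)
The paper states this proposition as a known result imported from the cited references and gives no proof of its own, so there is no internal argument to compare against. Your proof is correct and is precisely the standard binary-splitting argument that underlies those references: the termwise bound on $q^n f(r)=\sum_i a_i p^i q^{n-i}$ gives the output bitsize $\tO(\tau+n\lambda(r))$, and the divide-and-conquer evaluation with precomputed powers $r^{2^k}$ gives per-level cost $\tO(2^k\tau+n\lambda(r))$, which sums over the $O(\log n)$ levels to $\tO(n(\tau+\lambda(r)))$; you also correctly identify why Horner's rule loses a factor of $n$. The only point worth making explicit in a polished write-up is that the recursion should be carried out on the unreduced integers $q^{\deg f_i}f_i(r)$ (numerator and power-of-$q$ denominator kept separate, no gcd reductions), so that every arithmetic operation is an integer operation on operands of the stated bitsizes.
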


\subsection{Root isolation}

We now focus on the problem of computing the roots of a given univariate polynomial. Here, we consider the two different but related problems of the computation of disjoint isolating regions and the approximation of the roots to a certain precision. Notice that isolating regions allow us to distinguish between two distinct roots, and thus also to determine the number of distinct roots. However, in general, isolating regions do not allow us to estimate the actual distance between two distinct roots as such regions might be considerably larger than the actual separation of the isolated root.

In order to overcome this issue, we are aiming for the computation of so called well-isolating regions from which we can derive a good estimate for the separation of a root $x$ or, more generally, for the distance from $x$ to any other root $y$.  
This good estimate plays a key role in the complexity for refining roots whenever needed.

\begin{definition}\label{well-isolating} Let $f \in \mathbb{C} [X]$ be a polynomial of degree $n$. Then, we define:
 \begin{itemize}
  \item[(a)] A \emph{well-isolating} interval  ${\mathcal I}=(a,b)$ for a real root $x$ of $f$  contains $x$, 
  and it holds that 
  $32 \cdot n \cdot |b-a|<\sep(x,f)$.
  \item[(b)] A \emph{well-isolating} disk ${\mathcal D}_{r}(m)=\{x\in \mathbb{C} \mid |x-m|\leq r\}$  for a complex root $x$ of $f$ contains $x$,
  and it holds that 
  $64 \cdot n \cdot r<\sep(x,f)$.
\end{itemize}
As a consequence a well-isolating interval (resp. disk) contains only one real (resp. complex) root of $f$.
\end{definition}

In what follows, we often have to deal with approximations of polynomials and to compute approximations of an exact value that a given polynomial takes at a certain point. The following definitions will turn out to be useful in order to specify these computations.

\begin{definition}\label{Lbitapproximation}
For a complex number $a\in\C$  and an integer $L$, we say that a dyadic Gaussian number of the form 
$$\tilde{a}={c}\cdot {2^{-L-1}}
+i\cdot {d}\cdot {2^{-L-1}}\in\Q+i\cdot \Q,$$ 
with $c,d\in\Z$, is an \emph{(absolute) $L$-bit approximation} of $a$ if $|a-\tilde{a}|<2^{-L}$.

For a polynomial $f=a_0+\cdots+ a_n\cdot X^n\in\C[X]$ with arbitrary complex coefficients and an integer $L$, we say that a polynomial $\tilde{f}=\tilde{a}_0+\cdots +\tilde{a}_n\cdot X^n$ 
is an 
(absolute) $L$-bit approximation
of $f$ if for every $i$, $\tilde{a}_i$ is an (absolute) $L$-bit approximation of $a_i$.
\end{definition}

The following result is useful.
  \begin{proposition}
 \label{univariate-evaluationbis}\cite{KS15} Let 
 $f \in \Z [X]$
  be a
non-zero
 polynomial of magnitude 
bounded by
$(n,\tau)$, $x \in\C$ be an arbitrary complex value, and $L$ be a positive integer. 

We can compute
a  dyadic approximation $\tilde{\beta}=
 {b}\cdot {2^{-L-1}}$ of $\beta:=f(x)$ , with $b\in\Z[i]$ and $|\beta-\tilde{\beta}|<2^{-L}$, using $\tO(n(L+n\cdot\log(\max(1, |x|))+\tau))$ bit operations. 
 \end{proposition}

The following propositions 
summarize
 the results on root isolation and approximation for a complex polynomials we use in the paper.

\begin{proposition}
 \label{sagraloff-isolation}(\cite[Thm.~4]{MSW2}\footnote{See also
 \cite{CIsolate,MSW1,Pa,SM16} for comparable results.}) Let
  $f \in \mathbb{C} [X]$
  be a
  non-zero polynomial of degree $n$ with 
  $1\le |\LCF(f)| \le 4$. 
  Suppose that the number $m$ of distinct roots of $f$ is given, then it holds:
	\begin{itemize}
	\item[(a)] Using a number of bit operations bounded by
	\begin{align}\label{complexityisol}
\tO\left(n\cdot(n^2+n\logmea(f)
+\logGdisc(f))\right)
\end{align}
	we can compute, for all $x\in \mathrm{V}_{\mathbb{C}}(f)$,
	the multiplicities $\mult(x,f)$ as well as well-isolating disks ${\mathcal D}_{r(x)}(m(x))\subset\C$  with dyadic centers $m(x)$ and dyadic radii $r(x)$ such that 
 the bitsizes of  all $m(x)$ and $r(x)$ sum up to 
 $$\tO(n+\logmea(f)+\logsep(f^\star_\mathbb{C})) .
 $$

 As input, we need an oracle giving an absolute $L'$-bit approximation of $f$, where $L'$ is bounded by 
\begin{align}\label{complexityprec}
\tO\left(n\logmea(f)+\logsep(f)+\logGdisc(f)\right).
\end{align}

\item[(b)] Let $\mathrm{V}^*\subset \mathrm{V}_{\mathbb{C}}(f)
$ be a subset of the roots of $f$, $\mu=\max_{x\in \mathrm{V}^*}\mult(x,f)$, and let $L$ be a given positive integer. Then, we can further refine the isolating disks ${\mathcal D}_{r(x)}(m(x))$ for all roots $x$ in $\mathrm{V}^*$ to a size less than $2^{-L}$ using 
\begin{align}\label{complexityref}
\tO\left(n\cdot\left(L\cdot\mu+n^2+n\logmea(f)
+\logGdisc(f)\right)\right)
\end{align}
bit operations.\footnote{\cite[Thm.~4]{MSW2} only provides a bound for the refinement of \emph{all} isolating disks (i.e. for $\mathrm{V}^*=\mathrm{V}_{\C}(f)$), however, from the proof of \cite[Thm.~4]{MSW2}, the claimed bound directly follows. In addition, in~\cite[Theorem 4]{MSW2}, the additive term $nL\cdot \mu$ appears in the bound on the needed input precision. We remark that this is a typo and that the actual bound is better by a factor $n$. The proof of~\cite[Theorem 4]{MSW2} clearly shows this fact.} As input, we need an oracle giving an absolute $L'$-bit approximation of $f$, where $L'$ is bounded by 
\begin{align}\label{complexityprec1}
\tO\left(L\cdot \mu+n\logmea(f)+\logsep(f)+\logGdisc(f)\right).
\end{align}
\end{itemize}
\end{proposition}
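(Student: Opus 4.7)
The plan is to present this as a direct consequence of the certified complex root isolator of \cite{MSW2} (compare also \cite{CIsolate,Pa,SM16}), coupled with the quantitative bounds already established in Proposition \ref{GammaDeltaSigma} and Proposition \ref{compareMahlerCoeff}. The statement of the proposition essentially packages the analysis of that solver in terms of the quantities $\logmea(f)$, $\logsep(f)$, and $\logGdisc(f)$, so the task is to match the existing analysis to these quantities and to verify that the refinement complexity scales with the multiplicity $\mu$ rather than with $n$.

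For part (a), I would first invoke the adaptive-precision root finder from \cite{MSW2}: the algorithm requests $L$-bit approximations of $f$ with $L=1,2,4,\ldots$ and, at each precision, runs an approximate splitting/clustering procedure followed by a Pellet-type certification that locates all roots with their multiplicities. The analysis of the certification shows that it succeeds once $L$ exceeds a threshold controlled by the precision needed to separate nearby roots and to verify multiplicities via derivative evaluations; translating these two requirements through Proposition \ref{generalunivariate} and Proposition \ref{GammaDeltaSigma} yields the bound (\ref{complexityprec}) on $L'$. Since the per-iteration cost is $\tO(n(n+L))$ and the precision doubles geometrically, the total bit complexity telescopes to (\ref{complexityisol}). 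An additional $O(\log n)$ refinement rounds on each returned disk convert it into a \emph{well}-isolating disk in the sense of Definition \ref{well-isolating}; the extra cost is absorbed in (\ref{complexityisol}). For the bitsize bound on the dyadic centers $m(z)$ and radii $r(z)$, I would observe that taking $m(z)$ as a dyadic approximation of $z$ to precision $\tO(\logsep^\star(f)+\logmea(f)+n)$ already determines a well-isolating dyadic disk, and that the sum of these individual precisions over $z\in V_{\C}(f)$ is $\tO(n+\logmea(f)+\logsep^\star(f))$.

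For part (b), starting from the well-isolating disks produced in (a), I would apply the multiplicity-aware Newton iteration $z\leftarrow z-\mu(z)\cdot f(z)/f'(z)$ from \cite{MSW2}, which retains quadratic convergence at a root of multiplicity $\mu(z)$. Quadratic convergence implies that $O(\log L)$ iterations suffice to reach accuracy $2^{-L}$, but the working precision in the final iteration must itself be $\Omega(L)$, so that one evaluation of $f$ and $f'$ on an $L$-bit dyadic input costs $\tO(n\cdot L)$ bit operations; exploiting that only $\mu(z)$ of the factors contribute non-trivially at the current disk (via the factorization used in \cite{MSW2}) reduces this to $\tO(L\cdot\mu(z))$ per root. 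Summing over $z\in V^\star$ and adding the fixed cost from part (a) yields (\ref{complexityref}); the corresponding worst-case input precision (\ref{complexityprec1}) is whatever is needed to make this final evaluation reliable.

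The main obstacle I would expect is precisely achieving the $L\cdot\mu$ dependence (rather than $L\cdot n$) in both the running time and the input precision, and matching the separation/discriminant terms in the complexity to $\logsep(f)+\logGdisc(f)$ as they appear in Proposition \ref{GammaDeltaSigma}. This is where the multiplicity-aware Newton analysis from \cite{MSW2} and the amortized bounds of Proposition \ref{generalunivariate} are essential, and also where the erratum noted in the footnote (the missing factor of $n$ in the original precision statement) has to be handled carefully.
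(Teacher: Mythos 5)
The paper does not actually prove this proposition: it is imported verbatim from \cite[Thm.~4]{MSW2}, with two footnotes explaining that the subset-refinement variant and the corrected input-precision bound follow from the \emph{proof} given in that reference; the only in-text material is an informal sketch (adaptive doubling of the approximation precision, clustering of the roots of $\tilde f$, certification of each cluster via Rouch\'e's theorem). Your outline for part (a) is in the same spirit and is consistent with that sketch, so there is nothing to object to there beyond the fact that, like the paper, you are ultimately deferring to \cite{MSW2}. For part (b), however, your accounting of the $L\cdot\mu$ term misrepresents the mechanism. The multiplicity enters as a precision \emph{penalty}, not a saving: near a root of multiplicity $\mu(z)$ the polynomial vanishes to order $\mu(z)$, so certifying a disk of radius $2^{-L}$ requires evaluating $f$ to roughly $L\cdot\mu(z)$ bits, which is exactly why the needed input precision in (\ref{complexityprec1}) carries the term $L\cdot\mu$ and why each of the $\tO(1)$ refinement phases costs $\tO(n\cdot L\cdot\mu)$ with $\mu=\max_{z\in\mathrm{V}^*}\mu(z)$. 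Your claim that ``only $\mu(z)$ of the factors contribute'' and that the per-root cost is therefore $\tO(L\cdot\mu(z))$ would, summed over $\mathrm{V}^*$, give a bound of the form $L\cdot\sum_z\mu(z)\le nL$, which is neither what the proposition states nor how the algorithm of \cite{MSW2} works (one cannot isolate the ``relevant factors'' of a polynomial given only by coefficient approximations; that is precisely why the square-free-part trick available in Proposition~\ref{sagraloff-isolation-integer} fails here). Since the proposition is cited rather than proved, this does not open a gap in the paper itself, but as a justification of the stated bounds your part (b) argument would not go through as written.
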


In the literature, the special case of an integer polynomial $f$ with coefficients of bitsize at most $\tau$ has attracted a lot of interest.
 The following result, which provides bounds on the  isolation of the roots as well on the problem of further refining the isolating disks, is an almost straight forward consequence of Proposition~\ref{sagraloff-isolation} (applied to the polynomial $f\cdot\LCF(f)^{-1}$).

\begin{proposition}
 \label{sagraloff-isolation-integer}\cite[Thm.~5]{MSW2} \footnote{See also
 \cite{CIsolate,MSW1,Pa,PT,SM16}} Let 
 $f \in \Z [X]$
  be a
  non-zero
  polynomial of magnitude 
bounded by
$(n,\tau)$.
 Using $\tO(n^2\tau+n^3)$ bit operations, one can compute
 \begin{itemize}
 \item[(a)] well-isolating disks ${\mathcal D}_{r(x)}(m(x))\subset\C$ for all complex roots $x$ of $f$ with dyadic centers $m(x)$ and dyadic radii $r(x)
 $ such that 
 the bitsizes of  all $m(x)$ and $r(x)$ sum up to $\tO (n \tau)$, and
 \item[(b)] the multiplicities $\mult(x,f)$ of each of the roots $x$.
  \item[(c)] For an arbitrary positive integer $L$, one can further refine all isolating disks to a size less than $2^{-L}$ using $\tO(n^2\tau+n^3+nL)$ bit operations.\footnote{Notice that, in contrast to the general case, where the coefficients of $f$ are not necessarily integers, the additional factor $\max_{x\in \mathrm{V}_{\C}(f)}\mult(x,f)$ is missing. This is due to the fact that, within the given complexity, we can first compute the square-free part $f^\star$ of $f$ and then work with $f^\star$ to refine the isolating disks.}
 \end{itemize}
\end{proposition}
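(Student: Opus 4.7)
The plan is to reduce to Proposition~\ref{sagraloff-isolation} (for parts (a), (b)) and Corollary~\ref{rootisolation:squarefree} (for part (c)) by an elementary rescaling, then substitute the quantitative bounds already established for integer polynomials of magnitude $(n,\tau)$. Pick an integer $k$ with $|\LCF(f)|/2^{k}\in[1/2,1]$ and set $g:=f/2^{k}$. Then $g$ has dyadic rational coefficients trivially derivable from those of $f$, so supplying an $L'$-bit approximation of $g$ costs $\tO(n(\tau+L'))$, and $g$ and $f$ have the same roots with identical multiplicities. The four quantities entering the bounds of Proposition~\ref{sagraloff-isolation} are bounded as follows: $\logmea(g),\logmeah(g)\in O(\tau+\log n)$ by Proposition~\ref{compareMahlerCoeff}; $\logsep(g)=\logsep(f)\in\tO(n\tau+n^{2})$ by Proposition~\ref{corodisc}(b); and since $g^{[i]}=f^{[i]}/2^{k}$, one gets $\logGdisc(g)\le\logGdisc(f)+nk$, while $\logGdisc(f)\in\tO(n\tau+n^{2})$ follows from Proposition~\ref{generalunivariate}(b) applied to the sequence of derivatives $(f^{[j]})_{j\ge 1}$, whose magnitudes are $(n,\tau+n)$ by Lemma~\ref{littleremark}.

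The number $m$ of distinct roots of $f$, required as input to Proposition~\ref{sagraloff-isolation}, is computed as $\deg(f/\gcd(f,f'))$ in $\tO(n^{2}\tau)$ bit operations via Proposition~\ref{gcd-comp}. Plugging the above bounds into (\ref{complexityisol}) yields, for parts (a) and (b), a running time of $\tO(n(n^{2}+n\tau+(n\tau+n^{2})+(n\tau+n^{2})))=\tO(n^{2}\tau+n^{3})$, with input oracle precision $L'\in\tO(n\tau+n^{2})$ by (\ref{complexityprec}); supplying this precision costs $\tO(n(\tau+L'))=\tO(n^{2}\tau+n^{3})$, which fits in the budget. The bound $\tO(n\tau)$ on the sum of bitsizes of centers and radii follows from the output-size statement in Proposition~\ref{sagraloff-isolation}(a), combined with $\logsep^{\star}(g)=\logsep^{\star}(f)\in\tO(n\tau)$ from Proposition~\ref{corodisc}(a).

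For part (c), a naive substitution into Proposition~\ref{sagraloff-isolation}(b) would leave a factor $\mu=\max_{z}\mu(z)$ in front of the $L$-dependent term, which we must remove. To do so, first compute the square-free part $f^{\star}=f/\gcd(f,f')$ in $\tO(n^{2}\tau)$ bit operations using Propositions~\ref{gcd-comp} and~\ref{exact_division_comp}; by Proposition~\ref{Mignotte}, $f^{\star}$ has magnitude $(n,O(n+\tau))$. Since $f$ and $f^{\star}$ have the same roots and $\deg f^{\star}\le n$, every well-isolating disk for $f$ produced in part (a) is a fortiori well-isolating for $f^{\star}$, so refinement may be carried out via Corollary~\ref{rootisolation:squarefree}(b) applied to an analogous rescaling of $f^{\star}$. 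Because $\Disc(f^{\star})$ is a nonzero integer, $|\log|\Disc(f^{\star})||=\log|\Disc(f^{\star})|\in\tO(n\tau+n^{2})$ by Hadamard's bound, so substitution into (\ref{complexityref_sqfr}) and (\ref{complexityprec2}) yields the claimed bit-operation count $\tO(n^{2}\tau+n^{3}+nL)$.

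The only real subtlety is the bookkeeping of the rescaling and the oracle interface: one must check that although $g$ and the rescaled $f^{\star}$ do not have integer coefficients, supplying their $L'$-bit approximations from the integer coefficients of $f$ (and from those of $f^{\star}$, which are themselves obtained in our target budget via GCD and exact division) remains essentially free and does not affect the overall complexity. Everything else is a mechanical substitution of the quantitative bounds from Section~\ref{univariateres} into the general statements of Proposition~\ref{sagraloff-isolation} and Corollary~\ref{rootisolation:squarefree}.
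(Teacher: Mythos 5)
Your proof is correct and follows exactly the route the paper itself takes: the paper states this proposition as a citation of \cite[Thm.~5]{MSW2}, remarking only that it is an almost straightforward consequence of Proposition~\ref{sagraloff-isolation} applied to the normalized polynomial $f\cdot\LCF(f)^{-1}$ and, for part (c), that one first passes to the square-free part $f^\star$ (see the footnote to the statement). Your rescaling by a power of two, the substitution of the bounds from Propositions~\ref{compareMahlerCoeff}, \ref{generalunivariate} and~\ref{corodisc}, and the square-free reduction via Corollary~\ref{rootisolation:squarefree} for part (c) are precisely the details the paper leaves implicit.
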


Finally we can also identify common roots  of a polynomials $f$ with polynomials of a family $g_1,\ldots,g_m$.

\begin{proposition}\label{comparingroots}
Let 
$f,g_1,\ldots,g_m\in \Z [X]$
 be non-zero 
 polynomials of magnitudes
bounded by
 $(d,\tau)$, $(d_1,\tau_1),\ldots,(d_m,\tau_m)$, respectively, and let $N,\Lambda$ be positive integers such that $d+d_1+\cdots+d_m<N$ and $\tau+\tau_1+\cdots+\tau_m<\Lambda$. Then, 
we can isolate all roots of $f$ and all polynomials $g_i$, and identify all common roots of each  pair $(f,g_i)$ using no more than 
$\tO(N^2\Lambda+N^3)$ bit operations.
Within the same complexity, we can also determine the signs ($0$, $1$ or $-1$) of the $g_i$ at the real zeroes of $f$.
\end{proposition}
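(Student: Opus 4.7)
The plan is to reduce to a single invocation of Proposition \ref{sagraloff-isolation-integer} applied to the product polynomial $F:=f\cdot g_1\cdots g_m$. Since the Mahler measure is multiplicative and each factor satisfies $\Mea(\cdot)\le\sqrt{N+1}\cdot 2^{\Lambda}$ by Proposition \ref{compareMahlerCoeff}, one has $\Mea(F)\le 2^{\Lambda+O(N)}$; consequently the coefficients of $F$ have bitsize in $O(\Lambda+N)$ and $F$ has magnitude $(N,\Lambda+O(N))$. Proposition \ref{sagraloff-isolation-integer} then provides well-isolating disks and multiplicities for all complex roots of $F$ in $\tO(N^2(\Lambda+N)+N^3)=\tO(N^2\Lambda+N^3)$ bit operations, and the root set of $F$ is precisely the union of the root sets of $f$ and of the $g_i$.

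To separate that information I would apply Proposition \ref{sagraloff-isolation-integer} individually to $f$ (cost $\tO(d^2\tau+d^3)$) and to each $g_i$ (cost $\tO(d_i^2\tau_i+d_i^3)$); using $\sum_i d_i\le N$ and $\sum_i d_i\tau_i\le N\Lambda$, the aggregate cost is bounded by $\tO(N^2\Lambda+N^3)$. Then I would match every isolating disk of $f$ or of $g_i$ to the unique disk of $F$ containing it. The matching precision needed is of order $\logsep(F)$, which by Proposition \ref{corodisc}(b) applied to $F$ is $\tO(N\Lambda+N^2)$; refining the disks of $F$ to this precision via Proposition \ref{sagraloff-isolation-integer}(c) still fits in $\tO(N^2\Lambda+N^3)$. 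This matching recovers, for every $z\in\mathrm{V}_{\mathbb{C}}(F)$, the individual multiplicities $\mu(z,f)$ and $\mu(z,g_i)$, so that $z$ is a common root of $f$ and $g_i$ precisely when both are positive.

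For the sign of each $g_i$ at every real root $z$ of $f$, the common-root cases already give sign $0$. For a non-common real root $z$, I would adaptively refine its isolating interval and evaluate $g_i$ via Proposition \ref{univariate-evaluation}; the precision needed at the pair $(z,i)$ is essentially $|\log|g_i(z)||$. Proposition \ref{generalunivariate}(b), applied to $f$ with the single polynomial $g_i$, yields
\[
\sum_{\substack{z\in\mathrm{V}_{\mathbb{C}}(f)\\ g_i(z)\ne 0}}\mu(z)\cdot\bigl|\log|g_i(z)|\bigr|\in\tO(d\tau_i+d_i\tau),
\]
and summing over $i$ bounds the total precision by $\tO(N\Lambda)$. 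Plugging this budget into Proposition \ref{sagraloff-isolation-integer}(c) for the adaptive refinement of the real isolating intervals of $f$, and into Proposition \ref{univariate-evaluation} for the evaluations, keeps the total cost of the sign step within $\tO(N^2\Lambda+N^3)$.

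The main obstacle is precisely this sign step: paying the worst-case precision $\tO(N\Lambda)$ at each of the up to $dm$ pairs $(z,g_i)$ would yield an overall cost of $\tO(N^3\Lambda)$, well beyond the target. The amortization provided by Proposition \ref{generalunivariate}, which replaces a product-of-worst-cases by a sum-of-worst-cases, is what makes the bound $\tO(N^2\Lambda+N^3)$ achievable.
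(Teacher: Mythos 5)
Your proposal is correct in substance and lands on the same complexity bound, but the root-identification part takes a genuinely different route from the paper. The paper never forms the product $F=f\cdot g_1\cdots g_m$: it computes the square-free part $f^\star$, then the gcds $h_i:=\gcd(f^\star,g_i)$, isolates the roots of $f^\star$ and of the $h_i$, refines the (few) disks of the $h_i$ below $\tfrac14\min_z\sep(z,f)$, and matches each against the unique intersecting disk of $f$. Your product-based variant instead uses $F$ as a common reference whose separation controls all pairwise distances between roots of distinct factors; it matches every disk of every factor against the disks of $F$ and thereby recovers the full multiplicity decomposition of each root of $F$, which is more information than the proposition asks for but costs nothing extra since $N\cdot\tO(N\Lambda+N^2)\subset\tO(N^2\Lambda+N^3)$. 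The gcd approach buys a smaller matching problem and avoids multiplying $m+1$ polynomials; the product approach avoids $m$ gcd computations and gives a cleaner uniform separation bound. The sign-determination step is essentially identical to the paper's: adaptive doubling of the evaluation precision, amortized by Proposition~\ref{generalunivariate}(b).

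Two points need tightening. First, in the matching step it is the disks of $f$ and of the $g_i$ that must be refined below $\tfrac14\min_z\sep(z,F)$, not (only) the disks of $F$: a large disk of $f$ can intersect several arbitrarily small disks of $F$, so shrinking the latter alone does not yield unique matches. The budget is unaffected, since refining all $O(N)$ disks to precision $\tO(N\Lambda+N^2)$ costs $\tO(N^2\Lambda+N^3)$ by Proposition~\ref{sagraloff-isolation-integer}(c). Second, Proposition~\ref{generalunivariate}(b) requires a nonvanishing $g_{i(z)}$ at \emph{every} root of the reference polynomial, so you cannot apply it to $f$ and the single polynomial $g_i$ when they share roots; you must first pass to $f_i:=f^\star/\gcd(f^\star,g_i)$ (as the paper does), whose magnitude is $(d,O(d+\tau))$ by Proposition~\ref{Mignotte}, giving the per-$i$ bound $\tO(d\tau_i+d_i(\tau+d))$ and, after summation, the same $\tO(N\Lambda+N^2)$ precision budget. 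Also, the evaluations at real algebraic points should invoke Proposition~\ref{univariate-evaluationbis} rather than Proposition~\ref{univariate-evaluation}, which is stated for rational arguments.
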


\begin{proof}
We 
first compute the square-free part $f^\star$
using 
$$\tO(d^2 \tau)\in \tO(N^2\Lambda)$$
 bit operations using 
 Corollary \ref{square_free}.
 The magnitude
 of  $f^\star$
 is bounded by $(d,O(d+\tau))$.
  Then, we may compute $h_i:=\gcd(f^\star,g_i)$ for all $i$ using 
 $$\tO(\sum\nolimits_{i=1}^m \max(d,d_i)\cdot(d\tau_i+d d_i+d_i \tau))\in \tO(N(d\Lambda+N\tau+dN)$$
bit operations due to Proposition~\ref{gcd-comp}. In the next step, we compute well-isolating disks for all complex roots of the polynomial $f^\star$ as well as for the complex roots of all polynomials $h_i$. We then refine the isolating disks for the roots of all polynomials $h_i$ to a size less than $\min_{x\in V_\mathbb{C}(f)}\sep(x,f)/4$. Since $\logsep(f)$ is bounded by
 $\tO(N\Lambda+N^2)$, this can be achieved using $\tO(N^2\Lambda+N^3)$ bit operations
 according to Proposition \ref{sagraloff-isolation-integer}.
 
 Notice that, after this refinement, each isolating disk ${\mathcal D}'$ for a root of $h_i$ intersects exactly one isolating disk ${\mathcal D}$ for a root of $f$, and thus ${\mathcal D}$ and ${\mathcal D}'$ isolate one common root. Hence, in order to identify common roots of $f$ and $g_i$, we just have to determine all intersections between the isolating disks for the roots of $h_i$ and those for the roots of $f$. For this, we first compute a lexicographic sorting of all centers of the isolating disks ${\mathcal D}$ for the roots of $f$, which uses $\tO(d^2 \tau)$ bit operations as we need $O(d\log d)$ many comparisons, each of precision $\tO(d\tau)$. Then, for a given isolating disk ${\mathcal D}'$ for a root of $h_i$, we can determine the unique disk ${\mathcal D}$ that intersects ${\mathcal D}'$ using 
 $\tO(N\Lambda+N^2)$ bit operations as the needed precision is bounded by
  $\tO(N\Lambda+N^2)$ 
  and only $O(\log d)$ many comparisons are needed. Hence, the total complexity for this step is also bounded by
   $\sum_{i=1}^m d_i\cdot\tO(N\Lambda+N^2)\in \tO(N^2\Lambda+N^3)$.
 
For the sign determination part, we simply compute sufficiently good $L$-bit approximations $\gamma_{i,x}$ of $g_i(x)$ for all $i$ and all real roots $x$ of $f$ with $g_i(x)\neq 0$. That is, we have to compute $L$-bit approximations $\gamma_{i,x}$ for $L=1,2,4,\ldots$ such that $|\gamma_{i,x}|>2^{-L}$, which then implies that $\operatorname{sign} g_i(x)=\operatorname{sign}\gamma_{i,x}$. Obviously, we succeed in doing so as soon as $L$ is larger than $|\log (|g_i(x)|)|$. Hence, for a specific real root $x$ of $f_i:=f^\star/\gcd(f^\star,g_i)$, the cost for this step is bounded by 
$\tO(d_i(|\log (|g_i(x)|)|+d_i\cdot\log(\max(1, |x|))+\tau_i))$ bit operations using 
 Proposition~\ref{univariate-evaluationbis}. 
The total cost is thus bounded by
\[
\sum_{i=1}^m d_i^2\cdot\sum_{x\in V_{\mathbb{R}}(f)}\log(\max(1,|x|))\quad+\quad N\cdot\sum_{i=1}^m d_i\tau_i \quad+\quad \sum_{i=1}^m d_i\sum_{x\in V_{\mathbb{R}}(f_i)}|\log( |g_i(x)|)|
\]
The first term is upper bounded by 
 $N^2\cdot\logmea(f)\in \tO(N^2\Lambda)$
, and the second term is upper bounded by $N\cdot\sum_{i=1}^m d_i\cdot\sum_{i=1}^m\tau_i\in O(N^2\Lambda)$. For the last term, we use Proposition~\ref{generalunivariate} b) to show that $\sum_{x\in V_{\mathbb{R}}(f_i)}|\log (|g_i(x)|)|\in \tO(N\Lambda+N^2)$ for all $i$, and thus also $\sum_{i=1}^m d_i\sum_{x\in V_{\mathbb{R}}(f_i)}|\log (|g_i(x)|)|\in \tO(N^2\Lambda+N^3)$.
\end{proof}

\section{Bivariate results}\label{bivariateresults}

Similar to our definition of the magnitude of a univariate polynomial, we introduce the following definitions for bivariate polynomials:

\begin{definition}
 A polynomial $F\in \Z[X,Y]$ is of \emph{magnitude}
bounded by
 $(n,\tau)$ if its
 total
  degree is bounded by $n$ and the 
 bitsize of each of its coefficients is bounded by 
 $\tau$.
\end{definition}

\subsection{Amortized bounds for bivariate systems}
\label{subsec:amort}
We consider an arbitrary polynomial $R\in\Z[X]$ of magnitude
bounded by
 $(N,\Lambda)$. Given
  $x\in \C$,
 we denote by $\mu(x)=\mult(x,R)$ its multiplicity as a root of $R$. 
  We further consider a bivariate polynomial
$$F(X,Y)=f_{n_y}(X)\cdot Y^{n_y}+\cdots+f_0(X) \in\Z[X,Y]$$ 
of magnitude
bounded by
 $(n,\tau)$, such that 
$$V_{\mathbb{C}}(R,F)=\{(x,y)\in \mathbb{C}\mid R(x)=F(x,y)=0\}$$
 is finite.

For a 
root $x$ of $R$, we denote by $n(x)$ the degree of $F(x,Y)$, which might be smaller than $n_y$ but at least $0$, since $V_{\mathbb{C}}(R,F)$ is finite. Notice that, for $n(x)\neq n_y$, we have $f_{n(x)}(x)\neq 0$ and $f_{n(x)+1}(x)=\cdots=f_{n_y}(x)=0$.
For a root 
$y$ of $F(x,Y)$, we denote in all Section \ref{subsec:amort} , with a slight abuse of notation,
$$\nu(y)=\mult(y,F(x,Y))$$
the multiplicity of $y$ as a root of $F(x,Y)$.

The following proposition provides amortized complexity bounds on the sum of lengths and  Mahler measures of the polynomials $F(x,Y)$.

\begin{proposition}\label{pro:bounds}
Let $R\in\Z[X]$ and $F\in\Z[X,Y]$ be 
non-zero
polynomials of magnitude
bounded by
 $(N,\Lambda)$ and  $(n,\tau)$ respectively such that $V_{\mathbb{C}}(R,F)$ is finite. Then, it holds that
\begin{equation}\label{pro:bounds1}
\sum_{x\in  \mathrm{V}_{\mathbb{C}}(R)}\mu(x)\cdot \loglen(F(x,Y)) \in \tO(N\tau+n\Lambda+n N)
\end{equation}
\begin{equation}\label{pro:bounds2}
\sum_{x\in  \mathrm{V}_{\mathbb{C}}(R)}\mu(x)\cdot\logmea(F(x,Y)) \in \tO(N\tau+n\Lambda+n N)
\end{equation}
\end{proposition}
\begin{proof}
For each root $x$ of $R$ in $\mathbb{C}$, we have 
$$2^{-n(x)}\cdot\Len(F(x,Y))\leq \Mea(F(x,Y))\leq \| F(x,Y)\|
$$ using Definition \ref{def} and according to (\ref{comparebis}). 
\\
Since $n(x)\leq n$ and  $\sum_{x\in  \mathrm{V}_{\mathbb{C}}(R)}\mu(x) = N$, it holds that $$\sum_{x\in  \mathrm{V}_{\mathbb{C}}(R)}n(x)\mu(x) \leq n N$$ and thus $$2^{-n N}\leq 2^{-\sum_{x\in  \mathrm{V}_{\mathbb{C}}(R)}n(x)\mu(x)}.$$
Let $\ell(x)$ be such that 
$$| f_{\ell(x)}(x)|=\max_{j=0,\ldots,n_y} | f_{j}(x)|.$$
We have $$ 
|f_{n(x)}(x)| \leq \Len(F(x,Y))\leq (n_y+1)\cdot | f_{\ell(x)}(x)|$$
$$ \| F(x,Y)\|\leq \sqrt{n_y+1} \cdot | f_{\ell(x)}(x)|,$$
hence 
denoting by 
\begin{eqnarray*}
A&=&\prod_{x\in  \mathrm{V}_{\mathbb{C}}(R)}
|f_{n(x)}(x)|^{\mu(x)}\\
B&=&\prod_{x\in  \mathrm{V}_{\mathbb{C}}(R)} | f_{\ell(x)}(x)|^{\mu(x)},
\end{eqnarray*}
$$A
\leq \prod_{x\in  \mathrm{V}_{\mathbb{C}}(R)} \Len(F(x,Y))^{\mu(x)}\leq (n_y+1)^N \cdot B
$$
and
$$2^{-nN}A
\leq \prod_{x\in  \mathrm{V}_{\mathbb{C}}(R)} \Mea(F(x,Y))^{\mu(x)}\leq \sqrt{n_y+1}^N \cdot B.
$$

The claims (\ref{pro:bounds1}) and (\ref{pro:bounds2}) follow by Proposition  \ref{generalunivariate}  applied to $R$ and the family $f_j$.
\end{proof}

Our next aim is to prove the following Proposition.

\begin{proposition}
\label{generalbivariate}
Let 
$R(X)\in  \Z[X]$ 
be a non-zero polynomial
of magnitude
bounded by
 $(N,\Lambda)$ and $F(X,Y)\in \Z[X,Y]$
be of magnitude 
bounded by
$(n_1,\tau_1)$, and suppose that  
$$V_{\mathbb{C}}(R,F)=\{(x,y)\in \mathbb{C}^2\mid R(x)=F(x,y)=0\}$$
is finite.
Let $G_1,\ldots,G_m$  in $\mathbb{C}[X,Y]$ be polynomials of 
magnitude bounded by  $(n_2,\tau_2)$.

\begin{itemize}
\item[(a)] Suppose that $A\subset V_{\mathbb{C}}(R,F)$ and that, for every $(x,y)\in A$, 
 $i(x,y)\in\{1,\ldots,m\}$ is such that $G_{i(x,y)}(x,y)\not=0$. Then, it holds that:
 \begin{equation}\label{boundgeneralmultivar-a}
 \sum_{(x,y) \in A}
  \mu(x)  
  \nu(y)
     \log (|G_{i(x,y)} (x, y)|) \in \tilde O((\tau_2n_1+\tau_1n_2)N+(\Lambda+N) n_1n_2).
    \end{equation}
\item[(b)]  Suppose that  $G_1,\ldots,G_m\in\Z[X,Y]$ and that, for every  $(x,y)\in V_{\mathbb{C}}(R,F)$,
there exists $i$ such that $G_i(x,y)\not=0$.
Denoting by $i(x,y)$ the smallest value of $i$ such 
 that $G_i(x,y)\not=0$, we have
 \begin{equation}\label{boundgeneralmultivar-b}
  \sum_{(x,y) \in V_{\mathbb{C}}(R,F)} \mu(x)  
  \nu(y)
    | \log (|G_{i(x,y)} (x, y)|) |\in \tilde O((\tau_2n_1+\tau_1n_2)N+(\Lambda+N) n_1n_2).
    \end{equation}
  where  $$V_{\mathbb{C}}(R,F)_{x}:=\{y\in \mathbb{C}\mid F(x,y)=0\}.$$
    \end{itemize}
\end{proposition}
\begin{proof}
\noindent a) 
First note that,
$$|G_{i(x,y)}(x,y)|\le
2^{\tau'_2}\max(1,\vert x \vert)^{n_2}
\max(1,\vert y \vert)^{n_2},$$
where $\tau'_2:=\lceil \log( (n_2+1)^2 2^{\tau_2})\rceil\in O(\tau_2+\log (n_2))$.
Further notice that 
\begin{equation}
\label{b-coef-Mea}
\prod_{(x,y)\in A} 
2^{\tau'_2 \mu(x)}\le \prod_{x\in  \mathrm{V}_{\mathbb{C}}(R)} 2^{\tau'_2 n_1 \mu(x)} \in 2^{ O((\tau_2+\log (n_2)) n_1N)}
\end{equation}
and
\begin{eqnarray}
\label{Meal-Sy}
|\LCF(R)^{n_1n_2}| \prod_{(x,y)\in A} \max(1,\vert x\vert)^{\mu(x)
\nu(y) 
n_2}& \le &
\Mea(R)^{n_1n_2}\in 2^{O((\Lambda+\log( N)) n_1 n_2)}.
\end{eqnarray}

Hence,
since
\begin{equation}
 \prod_{x\in  \mathrm{V}_{\mathbb{C}}(R)} 
|\LCF(F(x,Y))^{\mu(x)}|
\prod_{y\mid (x,y)\in A}
 \max(1,\vert y \vert)^{\nu(y)\mu(x)}\leq 
 \prod_{x\in  \mathrm{V}_{\mathbb{C}}(R)} \Mea(F(x,Y))^{\mu(x)}
\end{equation}
it follows that
\begin{equation}
\label{Meal-Sx}
\prod_{(x,y)\in A}
 \max(1,\vert y \vert)^{\mu(x)\nu(y) n_2}\in 2^{\tilde O((\tau_1 N+\Lambda n_1+n_1 N)n_2)}
\end{equation}
by 
Proposition  \ref{pro:bounds} and Proposition \ref{generalunivariate}.
We thus conclude that
\begin{equation}
\label{b-p}
 \prod_{(x,y)\in A} |G_{i(x,y)}(x,y)|^{\mu(x)\nu(y)}\in 2^{\tilde O((\tau_2 n_1+\tau_1 n_2)N+(\Lambda+N) n_1n_2)}
\end{equation}
and
\begin{equation}
\label{valeur-N}
\sum_{(x,y)\in A} \mu(x) \nu(y)  \log (|G_{i(x,y)} (x,y)|) \in \tilde O((\tau_2 n_1+\tau_1 n_2)N+(\Lambda+N) n_1n_2).
\end{equation}
  
\noindent 
b) In the first step, we aim to prove that 
$$\prod_{(x,y)\in V_{\mathbb{C}}(R,F)} |G_{i(x,y)}(x,y)|^{\mu(x)\nu(y)}\ge  \frac{1}{E''}$$
where $E''$ is a natural number of
bitsize $O(\Lambda n_1n_2+\tau_1 n_2 N)$.

Let as before
$$F(X,Y):=f_{n_y}(X)Y^{n_y}+\ldots+f_0(X),$$ with $n_y=\deg_Y(F)\le n_1$, and $R^\star$ the square-free part of
$R$ which is content-free (cf Definition \ref{def:contgcd}).

We define the polynomial sequence $(R^\star_{\ell} (X))_{\ell\in[1,n_y]}\in \Z[X],$
such that
\begin{equation}
\label{fixdegree}
 \deg (F (x, Y)) =\ell
 \Longleftrightarrow R^\star_{\ell} (x) = 0, 
 \end{equation}
 as follows.
 We first compute a family of polynomials $R^\star_{\ell}$ whose zeroes characterize
$\deg_Y(F(x,Y))=\ell$.
 \begin{equation}
 R^\star_{\le n_y} (X) :=R^\star(X),\label{factorisation-degree1}
  \end{equation}
 and
  \begin{equation}
 R^\star_{\le \ell-1} (X) := \gcd ( R^\star_{\le \ell}
 (X), f_{\ell}(X)),\quad  R^\star_{\ell-1} (X) := \frac{ R^\star_{\le \ell} (X)}{ R^\star_{\le \ell-1}
 (X)}.\label{factorisation-degree2}
  \end{equation}
for all $\ell \in \{n_y, \ldots, 1\}.$ 
Notice that
\begin{eqnarray}
R^\star&=&\prod_{\ell \in \{0,\ldots,n_y\}} R^\star_{\ell} (X)
\end{eqnarray}

 We further define the following content-free polynomials in $\Z[X]$.
 \[ R^\star_{\ell,>1} (X) :={\gcd(R^\star_\ell,R')}, R^\star_{\ell,1} (X):=\frac{R^\star_\ell}{R^\star_{\ell,>1}(X)} \]
 and, 
 for all $i \in \{1, \ldots, N \},$ $$R^\star_{\ell,>i} (X) := \gcd (R^\star_{\ell,>i-1}
 (X), R^{[i]}(X)), \quad R^\star_{\ell,i} (X) := \frac{R^\star_{\ell,>i-1} (X)}{R^\star_{\ell,>i}
 (X)}.$$
It is clear using Equation (\ref{fixdegree}) that
 $\deg (F (x, Y)) =\ell$ and $x$ is a root of multiplicity $\mu$ of $R$ if and only if
$R^\star_{\ell,\mu} (x) = 0.$

Notice that
\begin{eqnarray}
R^\star&=&\prod_{\ell,\mu} R^\star_{\ell,\mu}\\
R&=&\cont(R) \prod_{\ell,\mu} (R^\star_{\ell,\mu})^\mu
\end{eqnarray}

Let 
\begin{equation} \label{ref:trunc}
F_\ell(X,Y):=f_\ell(X)Y^\ell+\ldots+f_0(X),
\end{equation}
and
$$G=G_{1}+U G_{2}+\ldots+U^{m-1}G_m.$$ 
We further define
$$Z_{\ell,\mu}:=V_{\mathbb{C}}(R^\star_{\ell,\mu},F) \subset V_{\mathbb{C}}(R,F)$$
and 
$$A_{\ell,\mu}(U):=\res_X(\res_Y(F_\ell,G),R^\star_{\ell,\mu})\in \Z[U].$$
Let $$Q_\ell(U,X):=\res_Y(F_\ell,G)$$ and 
notice that
$$Q_\ell(U,x)=f_\ell(x)^{O(n_2)} \prod_{y\mid (x,y)\in Z_{\ell,\mu}} G(x,y,U)^{\nu(y)}.$$
Denoting by $\delta_\ell \le n_1n_2$ the degree of $Q_\ell (U,X)$ with respect to $X$ and $n_\ell$ the degree of $f_\ell$ with respect to $X$,
$$A_{\ell,\mu}(U)=\LCF(R^\star_{\ell,\mu})^{\delta_\ell
}\prod_{x\mid R^\star_{\ell,\mu}(x)=0}  f_\ell(x)^{O(n_2)}  \prod_{(x,y)\in Z_{\ell,\mu}} G(x,y,U)^{\nu(y)}
$$
The coefficient of the term of smallest degree in $U$ of $A_{\ell,\mu}(U)$ is a non-zero integer and equal to
$$\LCF(R^\star_{\ell,\mu})^{\delta_\ell-n_2 n_\ell}\res_X(f_\ell,R^\star_{\ell,\mu})^{n_2}\prod_{(x,y)\in Z_{\ell,\mu}} G_{i(x,y)}(x,y)^{\nu(y)}.$$
Since ${\delta_\ell-n_2 n_\ell}\in O(n_1n_2)$, 
$\cont(R) \prod_{\ell,\mu} \LCF(R^\star_{\ell,\mu})^\mu= \LCF(R)\in 2^{O(\Lambda)}$, and $$\prod_{\ell,\mu}\res_X(f_\ell,(R^\star_{\ell,\mu})^\mu)=\res_X(f_\ell,R)\in 2^{O(\tau_1 N+\Lambda n_1)},$$ 
we conclude that
\begin{equation}
\label{alphagamma}
\prod_{(x,y)\in V_{\mathbb{C}}(R,F)} |G_{i(x,y)}(x,y)|^{\mu(x)\nu(y)}\ge \frac{1}{E''}
\end{equation}
with $E'':=|\LCF(R)^{n_1n_2} \res_X(f_\ell,R)^{n_2}|\in 2^{O(\Lambda n_1n_2+\tau_1 n_2 N)}$.

Finally, 
we have
 \begin{equation}
 \label{b-n}
\sum_{(x,y)\in  \mathrm{V}_{\mathbb{C}}(R,F)} -\mu(x)\nu(y)\log( |G_{i(x,y)}(x,y)| ) 
\in O(\Lambda n_1n_2+\tau_1 n_2 N).
 \end{equation}

Defining  $A:=\{(x,y)\in V_{\mathbb{C}}(R,F)\mid |G_i(x,y)|\ge 1\},$
and using (\ref{b-p}), (\ref{b-n}) and  (\ref{valeur-N}),
we obtain
finally
\begin{equation}
\label{bit-Abar2}
\sum_{x\in  \mathrm{V}_{\mathbb{C}}(R,F)} \mu(x)\nu(y)| \log (|G_{i(x,y)}(x,x)| )|  \in \tilde O((\tau_2n_1+\tau_1n_2)N+(\Lambda+N) n_1n_2)
\end{equation} 
\end{proof}

 The following proposition provides amortized complexity bounds on the sum of the Mahler measures of the polynomials $F(x_i,Y)$, the separators of the roots $x_{i,j}$ as well as the absolute values of the first non-vanishing derivatives of $F(x_i,Y)$ at the roots $x_{i,j}$:

We denote 
\begin{equation}\label{Fk}
F^{[k]}=\frac{\partial_Y^{k}F}{k!}.
\end{equation}

\begin{proposition}\label{thm:bounds}
Let 
$R(X)\in  \Z[X]$
be a non-zero polynomial
of magnitude 
bounded by
$(N,\Lambda)$ and $F(X,Y)\in \Z[X,Y]$
be of magnitude 
bounded by
$(n,\tau)$, and suppose that  
$$V_{\mathbb{C}}(R,F)=\{(x,y)\in \mathbb{C}^2\mid R(x)=F(x,y)=0\}$$
is finite.
Then, it holds that
\begin{itemize}
\item[(a)] $\sum_{x\in  \mathrm{V}_{\mathbb{C}}(R)} \mu(x)\cdot \logGdisc(F(x,Y))\in
\tO(n(N\tau+n\Lambda+Nn)).$
\item[(b)] $\sum_{x\in  \mathrm{V}_{\mathbb{C}}(R)}\mu(x)\cdot\logsep(F(x,Y)) \in \tO(n(N\tau+n\Lambda+Nn)),$
\end{itemize}
\end{proposition}

\begin{proof}
(a) is an immediate consequence of Proposition \ref{generalbivariate}
applied to the family $F^{[k]}$.
(b) follows directly from (a), Proposition~\ref{GammaDeltaSigma} and Proposition \ref{pro:bounds}.
\end{proof}

\subsection{Fixing the degree and the 
number of distinct roots
in the fibers}
We first  give results on the multiplicities of the roots of $F(x,Y)$ at the $X$-critical points of $F$. 

Consider a bivariate polynomial
\begin{equation}
F(X,Y)=f_{n_y}(X)\cdot Y^{n_y}+\cdots+f_0(X) \in\mathbb{C}[X,Y].
\end{equation} 
Notice that
$$V_{\mathbb{C}}(F)=\{(x,y)\in \mathbb{C}^2\mid F(x,y)=0 \}$$
 contains no vertical line if and only if the polynomials $f_i(X)$, $i=0,\ldots,n_y$, do not share a common non-trivial factor.
 In this case, let $$\Crit_X(V_{\mathbb{C}} (F)) := \{(x, y) \in \mathbb{C}^2 \mid F (x, y) =\partial_Y F(x,y)= 0\}$$ 
 be the set of $X$-critical points of $F$.

    \begin{proposition}\label{multmult0}
 Let
  $F\in\mathbb{C}[X,Y]$
   be   such that $V_{\mathbb{C}}(F)$ contains no vertical line.
  Given $x\in \mathbb{C}$,
      \begin{equation}\label{mult5}
      \sum_{x\mid (x,y)
      \in \Crit_X (V_{\mathbb{C}} (F))}  (\mult(y, F_{n(x)}(x,Y))-1)
  \le \mult(x,\Disc_Y(F_{n(x)})).
  \end{equation} 
  \end{proposition}
    The proof of  Proposition \ref{multmult0}  is based on the following lemma.

  \begin{lemma}
  \label{mult1bis}Let $F$ and $G$ be two 
  non-zero
   bivariate polynomials. 
    Given $x\in \mathbb{C}$ such that  $F(x,Y)$ and $G(x,Y)$ are non identically zero,
      \begin{equation}\label{mult5bis}
      \deg(\gcd(F(x,Y),G(x,Y)))
  \le \mult(x,\res_Y(F,G)).
  \end{equation} 
  \end{lemma}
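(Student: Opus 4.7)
Set $f(Y):=F(z,Y)$, $g(Y):=G(z,Y)$, $h:=\gcd(f,g)\in\mathbb{C}[Y]$, and $k:=\deg h = \deg \gcd(F(z,Y),G(z,Y))$. The goal is to show $(X-z)^k \mid \res_Y(F,G)$. The natural vehicle is the Sylvester matrix $S(X):=\mathrm{Syl}_Y(F,G)$, an $(m_y+n_y)\times(m_y+n_y)$ matrix with entries in $\mathbb{C}[X]$ satisfying $\det S(X)=\res_Y(F,G)$, where $m_y=\deg_Y F$ and $n_y=\deg_Y G$. My plan is first to establish a lower bound on the nullity of the specialization $S(z)$ in terms of $k$, and then to convert that rank drop into a divisibility statement for $\det S(X)$ via the Smith normal form over the principal ideal domain $\mathbb{C}[X]$.

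For the nullity bound, I will exhibit $k$ linearly independent vectors in $\ker S(z)$. For each polynomial $w\in\mathbb{C}[Y]$ with $\deg w<k$, put $u:=(g/h)\,w$ and $v:=-(f/h)\,w$; then $uf+vg=0$, while $\deg u\le \deg g-1<n_y$ and $\deg v\le \deg f-1<m_y$, so the coefficient vector of $(u,v)$ lies in the domain of $S(z)$ and in its kernel. Choosing $w=1,Y,\ldots,Y^{k-1}$ yields $k$ linearly independent kernel vectors, so $\mathrm{rank}\,S(z)\le m_y+n_y-k$. (The edge case $f\equiv 0$ is handled by the same construction with $u=w$, $v=0$; and if both $f$ and $g$ vanish identically, every entry of $S$ is divisible by $(X-z)$ and the conclusion is trivial.)

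For the second step, I will invoke the Smith normal form: write $S(X)=U(X)\,D(X)\,V(X)$ with $U,V$ invertible over $\mathbb{C}[X]$ and $D=\mathrm{diag}(d_1,\ldots,d_{m_y+n_y})$ with $d_1\mid\cdots\mid d_{m_y+n_y}$ in $\mathbb{C}[X]$. Since $U(z)$ and $V(z)$ remain invertible over $\mathbb{C}$, the rank of $S(z)$ equals the number of indices $i$ with $d_i(z)\neq 0$. Combining with the nullity bound, at least $k$ of the invariant factors $d_i$ vanish at $z$, hence each is divisible by $(X-z)$, and therefore $(X-z)^k$ divides $d_1\cdots d_{m_y+n_y}$, which equals $\det S(X)=\res_Y(F,G)$ up to a nonzero constant. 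This gives $\mult(z,\res_Y(F,G))\ge k$, as required.

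The one place that requires a little care, and is the main conceptual obstacle, is the Smith normal form step: one must make sure that the rank drop of a polynomial matrix at a point truly forces the corresponding power of $(X-z)$ into the determinant, despite the fact that specializing $F$ and $G$ at $z$ may cause their $Y$-degrees to drop (making the univariate Sylvester matrix of $f,g$ genuinely smaller than $S(z)$). The argument above sidesteps this subtlety by working only with $S$ and its Smith form over $\mathbb{C}[X]$, and by producing the $k$ kernel vectors directly inside the fixed Sylvester structure of dimensions $(m_y,n_y)$, so that possible degree drops in $f$ or $g$ never obstruct the construction.
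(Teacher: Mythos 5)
Your proof is correct, and its first half coincides with the paper's: both arguments produce a nullity bound for the specialized Sylvester matrix $S(z)$ by identifying the kernel of $(u,v)\mapsto uf+vg$ with the multiples of $g/h$ and $-f/h$. Where you genuinely diverge is in the second half: the paper converts the rank drop of $S(z)$ into a multiplicity bound on $\det S(X)$ by an induction on $k$ using Jacobi's formula $\frac{d}{dX}\det M=\sum_{i,j}(-1)^{i+j}m_{i,j}'\det M_{i,j}$, whereas you invoke the Smith normal form of $S(X)$ over the PID $\mathbb{C}[X]$ and read off that at least $k$ invariant factors vanish at $z$. The Smith-form route is arguably cleaner and gives slightly more (it localizes the factors of $(X-z)$ in the invariant factors rather than just in the determinant), at the cost of citing a structure theorem; the paper's induction is more elementary and self-contained. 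A further point in your favour: you explicitly handle the fact that $\deg F(z,Y)$ or $\deg G(z,Y)$ may drop below the formal degrees $m_y,n_y$, by constructing the $k$ kernel vectors directly inside the fixed $(m_y+n_y)$-dimensional Sylvester structure; the paper's first auxiliary lemma is stated for the actual degrees of $f,g$ and leaves the (routine but nonempty) comparison with the specialized formal Sylvester matrix implicit. Your edge-case handling ($f\equiv 0$, or both $f$ and $g$ identically zero) is also correct.
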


 \begin{proof} 
The claim follows clearly from the two following 
statements.

 a) Let $f,g$ be two univariate polynomials of respective degrees 
  $p,q$.
Let $\phi$ be the mapping from $K_{< q} [X] \times K_{< p} [X]$ to $K_{<
  p+q} [X]$ sending $(U, V)$ to $Uf + Vg$. Then $\Ima(\phi)$ is the set of multiples of the greatest common divisor $h=\operatorname{gcd}(f,g)$ of $f$ and $g$, and the rank of $\phi$ is  $p+q-\deg(h)$. It is clear that $(m\cdot g/h,-m\cdot f/h)$ is in the kernel of $\phi$ for every polynomial $m$ of degree $<\deg(h)$. In the other direction, every $(U,V)$ in the kernel of $\phi$ is such that there exists $m$ of degree $<\deg(h)$ with $U=m\cdot g/h$ and $V=-m\cdot f/h$. This implies that the dimension of $\Ker(\phi)$ is equal to $\deg(h)$ and  the dimension of $\Ima(\phi)$ is thus equal to $p+q-\deg(h)$. But every element of $\Ima(\phi)$ is a multiple of $h$, and the vector space of multiples of $h$ of degree $<p+q$ is also of dimension $p+q-\deg(h)$. It follows that  $\Ima(\phi)$  coincides with the set of multiple of $h$, and the rank of $\phi$ is  $p+q-\deg(h)$.

b) Let $M(X)$ be an $n\times n$ matrix with coefficients in $K[X]$. If the rank of $M(x_0)$ is equal to $n-k$, then $x_0$ is a root of $\det(M(X))$ of multiplicity at least $k$. The proof   is by induction on $k$.
     
     If $k=0$ the statement is true.
     
     If $k>0$, then $M(x_0)$ is not invertible, and
     $\det(M(x_0))=0$.
  Denote by   $m_{i,j}(X)$ the $(i,j)$-th entry of $M(X)$,  and by $M_{i,j}(X)$ the $(n-1)\times (n-1)$-matrix obtained by removing thee $i$-th row and $j$-th column from $M(X)$.
     The rank of $M_{i,j}(x_0)$ is
     $r(x_0) \le n-k$, hence by induction hypothesis $x_0$ is a root of $\det(M_{i,j}(X))$ of multiplicity at least equal to $(n-1)-r(x_0) \ge k-1$.
According to Jacobi's formula, we have 
     $$\frac{d}{dX}\det(M(X))=\sum_{i,j} (-1)^{i+j} m'_{i,j}(X) \det(M_{i,j}(X)),$$
and thus the claim follows by induction since $x_0$ is a root of $\det(M(X))$ and a root of
     multiplicity at least $k-1$ of its derivative.
  \end{proof}

\begin{proof}[Proof of Proposition \ref{multmult0}]
 Use Lemma  \ref{mult1bis} with $F=F_{n(x)}$, $G=\partial_YF_{n(x)}(X,Y)$ noting that 
   \begin{equation}\label{mult6}
      \sum_{y \mid (x,y)
      \in \Crit_X(V_{\mathbb{C}} (F))}  (\mult(y, F_{n(x)}(x,Y))-1)
 =\deg(\gcd(F_{n(x)}(x,Y),\partial_Y F_{n(x)}(x,Y))).
  \end{equation}
  \end{proof}
  
      \begin{proposition}\label{multmult1}
      Let $F\in \Z[X,Y]$
      be a non-zero
       square-free polynomial of magnitude
bounded by
$(n,\tau)$ such that $V_{\mathbb{C}}(F)$ has no vertical line.
    There exists a polynomial $R_Y\in \Z[Y]$ of magnitude
bounded by
 $(O(n^2),O(n\tau+n^2))$ such that
   given $y\in \mathbb{C}$
  \begin{equation}\label{mult1}
\sum_{x \mid (x,y)
\in \Crit_X (V_{\mathbb{C}} (F))}  (\mult(y
, F_{n(x)}(x,Y))-1)\le \mult(y,
R_Y).
  \end{equation}
In particular, the zeroes of $R_Y$ contain the projection of $\Crit_X(V_{\mathbb{C}} (F))$ on the $Y$-axis.
  \end{proposition}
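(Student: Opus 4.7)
The plan is to define
\[
R_Y(Y) := \res_X\bigl(F(X,Y),\, \partial_Y F(X,Y)\bigr) \in \Z[Y]
\]
and show this single candidate satisfies all three requirements. That $R_Y \not\equiv 0$ follows because the hypotheses force $\gcd(F, \partial_Y F) = 1$ in $\mathbb{C}[X,Y]$: any common irreducible factor $G$ would, by square-freeness of $F$, have to divide $\partial_Y G$, which by $Y$-degree considerations forces $G \in \mathbb{C}[X]$, contradicting the assumption that $V_\mathbb{C}(F)$ contains no vertical line. The magnitude bound $(O(n^2), O(n\tau + n^2))$ is immediate from Proposition~\ref{subres-comp} applied to $F$ and $\partial_Y F$ (with $\partial_Y F$ of bitsize $\tau + O(\log n)$ by Lemma~\ref{littleremark}); and the containment $\pi_Y(\Crit) \subseteq V(R_Y)$ is clear since any $(z, z') \in \Crit$ makes $F(X, z')$ and $\partial_Y F(X, z')$ share the root $z$, forcing $R_Y(z') = 0$.

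The heart of the proof is the multiplicity inequality~(\ref{mult1}). For each $(z, z') \in \Crit$, I would first establish the local lower bound
\[
I_{(z,z')}(F, \partial_Y F) \;:=\; \dim_\mathbb{C} \mathcal{O}_{(z,z')}/(F, \partial_Y F) \;\ge\; k - 1, \qquad k := \mult\bigl(z', F(z, Y)\bigr).
\]
This follows from the surjection $\mathcal{O}_{(z,z')}/(F, \partial_Y F) \twoheadrightarrow \mathbb{C}[[Y-z']]/(F(z,Y), \partial_Y F(z,Y))$ combined with a short Weierstrass computation: writing $F(z, Y) = (Y-z')^k u(Y)$ with $u$ a power-series unit, one obtains $\partial_Y F(z, Y) = (Y-z')^{k-1}\bigl(k u(Y) + (Y-z') u'(Y)\bigr)$, and since the bracketed factor is a unit at $z'$, the ideal $(F(z, Y), \partial_Y F(z, Y))$ coincides with $\bigl((Y-z')^{k-1}\bigr)$, producing a quotient of $\mathbb{C}$-dimension exactly $k-1$.

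To close the argument, I would appeal to the standard inequality
\[
\mult_Y(z', R_Y) \;\ge\; \sum_{(z, z') \in \Crit} I_{(z,z')}(F, \partial_Y F),
\]
which can be derived by factoring $F(X, Y) = c(Y)\prod_i (X - x_i(Y))$ over the algebraic closure of $\mathbb{C}(Y)$ (the $x_i$'s being distinct by square-freeness of $F$ in $\mathbb{C}(Y)[X]$), rewriting
$R_Y(Y) = c(Y)^{\deg_X \partial_Y F} \prod_i \partial_Y F(x_i(Y), Y)$,
and reading off the order at $z'$ via the Puiseux expansions of the $x_i$'s around that point; summing the local inequality over $(z,z') \in \Crit$ then yields~(\ref{mult1}). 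The main technical delicacy arises when the leading $X$-coefficient $c(Y)$ of $F$ vanishes at $z'$, which contributes extraneous multiplicity to $R_Y$; however, such extra contributions only increase $\mult_Y(z', R_Y)$ and therefore preserve the $\ge$-direction we need.
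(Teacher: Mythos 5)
Your proof is correct, but it takes a genuinely different route from the paper. The paper first treats the special case where only one critical point lies over $z'$, using the B\'ezout-type identity $R_Y=UF+V\partial_YF$ and differentiating it $\mu-2$ times; it then reduces the general case to this one by the infinitesimal shear $Y\leftarrow Y+\eps X$ over the field of algebraic Puiseux series and passing to the limit $\eps\to 0$. As a consequence the paper's $R_Y$ is the specialization at $\eps=0$ of the sheared resultant, which (as the remark following the proof points out) need not coincide with $\res_X(F,\partial_YF)$ when $\deg_XF<\deg_YF$; your argument shows that the plain resultant $\res_X(F,\partial_YF)$ nevertheless works, via the local bound $I_{(z,z')}(F,\partial_YF)\ge \mult(z',F(z,-))-1$ and the classical inequality $\mult(z',\res_X(F,G))\ge\sum_z I_{(z,z')}(F,G)$. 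Your approach avoids the shear and the Puiseux-series formalism over $\R\langle\eps\rangle$ entirely and produces a more canonical polynomial; what it costs is importing the resultant--intersection-multiplicity inequality, whose proof (the route you sketch) still requires Puiseux expansions, now in $Y-z'$. One point deserves more care than you give it: when $c(z')=0$ for $c=\LCF_X(F)$, the branches $x_i(Y)$ escaping to infinity contribute \emph{negative} order to $\prod_i\partial_YF(x_i(Y),Y)$, so it is not literally true that the extra contributions ``only increase'' the multiplicity; rather, a Newton-polygon argument shows the total pole order of the escaping branches is at most $\mult(z',c)$, so these losses are compensated by the factor $c(Y)^{\deg_X\partial_YF}$, and the inequality survives. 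With that lemma spelled out, your proof is complete.
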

  
\begin{proof}
a)  We suppose first that there is only one $x$ such that $(x,y)\in \Crit (V_{\mathbb{C}} (F))$, $\mult(y
, F_{n(x)}(x,Y))> 1$ and denote $\mu=\mult(y
, F_{n(x)}(x,Y))$.
Defining
\begin{equation}\label{UV}
R_Y(Y):=\res_X(F,\partial_YF)(Y),
\end{equation}
there exist $U(X,Y), V(X,Y)$ such that
\begin{equation}\label{UV1}
R_Y(Y)=U(X,Y)F(X,Y)+V(X,Y)\partial_YF(X,Y).
\end{equation}
Derivating (\ref{UV}) $1,\ldots,\mu-2$ times with respect to $Y$ and using that
$$F(x,y)=\partial_YF(x,y)=\ldots=\partial_Y^{(\mu-1)}F(x,y)=0$$
we obtain
\begin{equation}\label{multimult}
R_Y(y)= \partial_Y R_Y(y)=\ldots= \partial_Y^{(\mu-2)}R_Y(y)=0.
\end{equation}

b) We reduce the general case to the preceding situation by the change of variable
$Y\leftarrow Y+\eps X$ where $\eps$ is a new variable. We use for the proof the field $\mathbb{R}\langle \eps \rangle$ of algebraic Puiseux series \cite{BPRbook2}, which is a real closed field containing $\mathbb{R}(\eps)$, totally ordered with the order $0_+$ making $\eps$ positive and smaller than any positive element of $\mathbb{R}$. We denote
$\mathbb{C}\langle \eps \rangle ^2=\mathbb{R}\langle \eps \rangle ^2[i]$.

Notice that each $X$-critical point
$(x,y)$ of $F$ in $\mathbb{C}^2$ yields a $X$-critical point $(x,y-\eps x)$ of $\tilde{F}(X,Y):=F(X,Y+\eps X)$ in $\mathbb{C}\langle \eps \rangle ^2$ , and 
that 
the multiplicity of   $y-\eps x$ as a root of $\tilde{F}(x,Y)$ 
coincides with the
the multiplicity $\mu(x,y)$ of  $y$ as a root of $F(x,Y)$ 
as
$\partial_Y^{(i)}\tilde{F}(x,y)=\partial_Y^{(i)}F(x,y-\eps x)$. Moreover there are no two distinct critical points of $\tilde{F}(x,y)$ sharing
the same $y$-coordinate. Hence, it holds by a) that 
$y-\eps x$ is a root of $\res_X(\tilde F,\partial_Y \tilde F)(Y)$ of multiplicity at least 
$\mu(x,y)$.
Then
$$\res_X(\tilde F,\partial_Y \tilde F)(Y)=A(\eps)\tilde R(Y,\eps) $$
with $A(\eps)\in \mathbb{C}(\eps)$,
 $\tilde R(Y,\eps)$ monic in $Y$, 
$$\tilde R(Y,\eps)=\prod_{(x,y)\in \Crit (V_{\mathbb{C}} (F))}(Y-y+\eps x)^{\mu(x,y)}B(Y,\eps),$$
with 
  \begin{equation}\label{mult1fin}
\mult(y-\eps x
, \tilde F_{n(x)}(x,Y))-1\le \mu(x,y),
  \end{equation}
  and 
  $B(Y,\eps)\in \mathbb{C}[X,\eps]$ such that   $B(Y,0)\in \C[X]$ is a non-zero polynomial.
  Hence, denoting by $$\nu(y)=\sum_{x \mid (x,y)\in \Crit (V_{\mathbb{C}} (F))}\mu(x,y),$$
  $$R_Y:=\tilde R(Y,0)=\prod_{
 y\in \pi_Y(\Crit (V_{\mathbb{C}} (F))) }
 (Y-y)^{\nu(y)}B(Y,0),$$
  we have 
    \begin{equation}\label{mult1next}
\sum_{x \mid (x,y)\in \Crit (V_{\mathbb{C}} (F))}  (\mult(y
, F_{n(x)}(x,Y))-1)\le \nu(y)\le \mult(y,
R_Y).
  \end{equation} 
  Finally, notice that $R_Y$ is of magnitude
bounded by
 $(O(n^2),O(n\tau+n^2))$.
\end{proof}

\begin{remark}
When $\deg_X(F)\ge \deg_Y(F)$, it turns out that $$R_Y(Y)=\res_X(F,\partial_YF)(Y)$$ because the Sylvester-matrix of $F,\partial_YF$ and $\tilde F,\partial_Y \tilde F$ have the same dimension, since $\deg_X(F)=\deg_X(\tilde F)$.
However, when $\deg_X(F)<\deg_Y(F)$, it 
can
happen that $R_Y(Y)\not=\res_X(F,\partial_YF)(Y)$.

This is the case for example if
$$F(X,Y)=(X-Y^2+1)(X-Y^2-1)(XY^2-1)$$ since we have $$R_Y(Y)=2048(Y^4 + Y^2 - 1)^2(Y^4 - Y^2 - 1)^2Y^2,$$  
$$\res_X(F,\partial_YF)(Y)=-32 (Y^4 + Y^2 - 1)^2(Y^4 - Y^2 - 1)^2Y^3.$$
\end{remark}.

We now give details on how to determine  the degree of $F(x,Y)$ and its number of distinct complex roots
for a root $x$ of $R$. Our aim is to prove the following proposition:

\begin{proposition}\label{prop:computinggcddeg}
Let $R(X)\in  \Z[X]$
a non-zero polynomial of magnitude
bounded by
 $(N,\Lambda)$ and $F(X,Y)\in \Z[X,Y]$
be of magnitude
bounded by
 $(n,\tau)$, and suppose that  
$$V_{\mathbb{C}}(R,F)=\{(x,y)\in \mathbb{C}^2\mid R(x)=F(x,y)=0\}$$
is finite.
  Computing $$n(x):=\deg(F(x,Y)),k(x):=\deg
  (\gcd (F (x, Y), \partial_Y (F(x, Y))$$ for every root $x \in  \mathbb{C}$ of $R$   
  has bit complexity
   $$\tO (n\max(N,n^2)(N\tau+n\Lambda+N n)+n^5 \tau + n^6).$$
\end{proposition}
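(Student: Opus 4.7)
The plan is to split the task into two independent phases: first, compute $n(z)$ for every root $z$ of $R$, and second, compute $k(z)$ using the appropriate truncation $F_{n(z)}$ of $F$.

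For the first phase, I would construct the chain $(R^\star_\ell)_{0 \leq \ell \leq n_y}$ defined by equations (\ref{factorisation-degree1}) and (\ref{factorisation-degree2}). By Lemma \ref{lemmafixdegree}, a root $z$ of $R$ satisfies $n(z)=\ell$ exactly when $R^\star_\ell(z)=0$, so this chain encodes all the values $n(z)$ simultaneously. Since $R^\star$ has magnitude $(N,O(N+\Lambda))$ by Proposition \ref{Mignotte}, and the coefficient polynomials $f_\ell$ have magnitude $(n,\tau)$, the chain is built from $O(n)$ univariate gcd computations, each running in $\tO(\max(N,n)\cdot(N\tau+n(N+\Lambda)))$ bit operations by Proposition \ref{gcd-comp}. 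The total cost of this first phase is $\tO(n\max(N,n)(N\tau+n\Lambda+nN))$, which fits comfortably within the announced bound.

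For the second phase, I would use that on every root $z$ with $n(z)=\ell$ one has $F(z,Y)=F_\ell(z,Y)$, and hence $k(z)$ equals the smallest index $j$ for which the $j$-th principal subresultant coefficient $s_j^{(\ell)}(X):=\Sres_j(F_\ell,\partial_Y F_\ell)(X)\in\Z[X]$, taken with respect to $Y$, is nonzero at $z$. For each $\ell$, Proposition \ref{subres-comp} applied to $F_\ell$ and $\partial_Y F_\ell$, both of magnitude $(n,O(\tau))$, computes the entire family $(s_j^{(\ell)})_{j<\ell}$ in $\tO(n^4\tau)$ bit operations, each entry having magnitude $(O(n^2),O(n\tau))$. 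Summing over $\ell\le n_y$ this contributes the $\tO(n^5\tau)$ term of the announced bound.

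It then remains to identify, for every root $z$ of each $R^\star_\ell$, the smallest $j$ with $s_j^{(\ell)}(z)\neq 0$. I would proceed iteratively by setting $h_0^{(\ell)}:=R^\star_\ell$ and $h_{j+1}^{(\ell)}:=\gcd(h_j^{(\ell)},s_j^{(\ell)})$ for $j=0,\ldots,\ell-1$, so that the roots of $h_j^{(\ell)}/h_{j+1}^{(\ell)}$ are exactly those $z$ with $n(z)=\ell$ and $k(z)=j$. The main obstacle is to keep the total complexity within the announced bound: since each $h_j^{(\ell)}$ divides $R^\star_\ell$, its bitsize remains $O(N+\Lambda)$ by Proposition \ref{Mignotte}, and Proposition \ref{gcd-comp} then bounds each gcd by $\tO(\max(d_\ell,n^2)(d_\ell\cdot n\tau+n^2(N+\Lambda)))$ where $d_\ell:=\deg R^\star_\ell$. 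Summing over $j\le\ell$ and $\ell\le n_y$, and exploiting the amortisations $\sum_\ell d_\ell\le N$ and $\max(d_\ell,n^2)\le d_\ell+n^2$, the accumulated cost of all these iterative gcds fits into $\tO(n\max(N,n^2)(N\tau+n\Lambda+nN))$. Adding the three phases yields the claimed total $\tO(n\max(N,n^2)(N\tau+n\Lambda+Nn)+n^5\tau+n^6)$.
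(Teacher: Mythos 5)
Your construction is essentially the paper's: the chain $(R^\star_\ell)$ for the degrees (Lemma \ref{lemmacompdeg}), the subresultant/subdiscriminant families for each truncation $F_\ell$ (Lemma \ref{computesubres}), and the iterated gcd chain $h^{(\ell)}_{j+1}=\gcd(h^{(\ell)}_j,s^{(\ell)}_j)$, which is exactly the paper's $R^\star_{\ell,\ge k}$ from (\ref{factorisation-degree-gcd1}). Phases one and two are fine. The gap is in the complexity analysis of phase three, and it is not a technicality: your amortization does not suffice for the top fiber $\ell=n_y$.

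Concretely, for $\ell=n_y$ you bound every $h^{(n_y)}_j$ by magnitude $(d_{n_y},O(N+\Lambda))$ with $d_{n_y}\le N$, so each of the up to $n_y$ gcds in that chain costs $\tO\bigl(\max(N,n^2)(Nn\tau+n^2(N+\Lambda))\bigr)$ by Proposition \ref{gcd-comp}, and summing over $j\le n_y$ gives $\tO\bigl(n^2\max(N,n^2)(N\tau+n\Lambda+Nn)\bigr)$ --- a factor $n$ above the target. The amortizations you invoke ($\sum_\ell d_\ell\le N$ and $\max(d_\ell,n^2)\le d_\ell+n^2$) control the sum over $\ell$, not the sum over $j$ within the fiber $\ell=n_y$, and that fiber is the generic one ($d_{n_y}=N$ whenever no leading coefficient vanishes at a root of $R$). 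In the typical setting $N=n^2$, $\Lambda=n\tau$ your bound becomes $\tO(n^6\tau+n^7)$ instead of $\tO(n^5\tau+n^6)$. The missing ingredient is the paper's use of Proposition \ref{multmult0}: each root of $R^\star_{n_y,\ge k}$ with $k\ge 1$ is a root of $\Disc_Y(F)$ of multiplicity at least $k$, so $(R^\star_{n_y,\ge k})^k$ divides $\Disc_Y(F)$; hence $\deg R^\star_{n_y,\ge k}\le n(2n-1)/k$, the degrees for $k\ge1$ sum to $\tO(n^2)$, and (by multiplicativity of the Mahler measure plus Proposition \ref{compareMahlerCoeff0}) the bitsizes sum to $\tO(n\tau+n^2)$. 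Only the very first gcd in the chain involves a polynomial of magnitude $(N,O(N+\Lambda))$ and costs $\tO(\max(N,n^2)(Nn\tau+n^2\Lambda+Nn^2))$; all subsequent ones together cost $\tO(n^5\tau+n^6)$. Without this divisor-of-the-discriminant argument your phase three does not meet the stated bound.

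A secondary omission: the proposition asks for $n(z)$ and $k(z)$ attached to \emph{each root} $z$ of $R$, so after producing the factors $h^{(\ell)}_j/h^{(\ell)}_{j+1}$ you still need to match the (isolated) roots of $R$ to these factors; the paper does this via Proposition \ref{comparingroots}, which stays within budget but must be invoked.
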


Note that $n(x)-k(x)$ is the number of distinct complex roots of $F(x,Y)$.

We 
start by estimating
 the cost of computing the polynomials $R^\star_{\ell}$ 
defined in Equation (\ref{factorisation-degree1}) and Equation (\ref{factorisation-degree2}).

  \begin{lemma}\label{lemmacompdeg}
  The computation of all the polynomials $$( R^\star_{\ell} (X))_{\ell\in[1,n_y]}$$ uses 
$\tilde O(\max(n,N)( N\tau+n \Lambda+Nn)+n^3\tau+n^4)$ bit operations.
  \end{lemma}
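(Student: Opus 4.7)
My plan is to compute the sequence directly from its recursive definition. Start with $R^\star_{\le n_y}:=R^\star$ and, for $\ell$ descending from $n_y$ down to $1$, compute $R^\star_{\le \ell-1}:=\gcd(R^\star_{\le \ell},f_\ell)$ followed by $R^\star_{\ell-1}:=R^\star_{\le \ell}/R^\star_{\le \ell-1}$. First I would compute the square-free part $R^\star=R/\gcd(R,R')$ using Proposition~\ref{gcd-comp} for the gcd and Proposition~\ref{exact_division_comp} for the division; since $R$ and $R'$ have magnitude $(N,\tO(\Lambda))$, this step costs $\tO(N^2\Lambda)$ bit operations, and Proposition~\ref{Mignotte} then guarantees that every divisor of $R^\star$ of degree $d$ has bitsize $\tO(d+\Lambda)$. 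Consequently, every intermediate polynomial $R^\star_{\le \ell}$ and every $R^\star_\ell$ produced has magnitude bounded by $(N,\tO(N+\Lambda))$.

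For a single iteration $\ell$, Proposition~\ref{gcd-comp} applied to $R^\star_{\le \ell}$ (of magnitude $(d_\ell,\tO(d_\ell+\Lambda))$) and $f_\ell$ (of magnitude $(\deg f_\ell,\tau)$) bounds the cost of the gcd by $\tO(\max(N,n)(N\tau+n(N+\Lambda)))$, and the subsequent exact division is cheaper by Proposition~\ref{exact_division_comp}. A crude summation over the $n_y\le n$ iterations therefore yields a bound that is a factor $n$ larger than the one we wish to prove.

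The hard part is precisely this amortization. The structural facts I would exploit are: (i) the degrees $d_\ell:=\deg R^\star_{\le \ell}$ form a non-increasing sequence bounded by $N$; (ii) the factors $R^\star_\ell$ are pairwise coprime divisors of $R^\star$, so $\sum_\ell\deg R^\star_\ell\le N$; and (iii) since $F$ has total degree $\le n$, one has $\sum_\ell\deg f_\ell\in O(n^2)$. An iteration is \emph{trivial} when $R^\star_{\le \ell}$ already divides $f_\ell$, i.e.\ $R^\star_{\le \ell-1}=R^\star_{\le \ell}$ and $R^\star_{\ell-1}=1$; such iterations can be detected by a single Euclidean reduction of $f_\ell$ modulo $R^\star_{\le \ell}$, and their total cost over all $\ell$ contributes the overhead term $\tO(n^3\tau+n^4)$ using (iii). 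A non-trivial iteration strictly decreases $d_\ell$, so at most $\min(n,N)$ of them occur; weighting the per-iteration gcd cost of Proposition~\ref{gcd-comp} by the actual degree drop $\deg R^\star_{\ell-1}$ and telescoping via (ii) absorbs the extraneous factor of $n$, yielding the announced bound $\tO(\max(n,N)(N\tau+n\Lambda+Nn)+n^3\tau+n^4)$.
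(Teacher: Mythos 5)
There is a genuine gap, and it sits exactly where you locate "the hard part". The mechanism that makes the bound work is not an amortization over degree drops at all: it is the observation that $R^\star_{\le n_y-1}=\gcd(R^\star,f_{n_y})$ \emph{divides $f_{n_y}$}, so by Proposition~\ref{Mignotte} every subsequent $R^\star_{\le \ell}$ (and $R^\star_\ell$), being a divisor of $f_{n_y}$, has magnitude $(n,O(n+\tau))$ --- in particular its bitsize is independent of $N$ and $\Lambda$. With that, each of the at most $n_y\le n$ remaining gcd's costs $\tO(n(n\tau+n(n+\tau)))=\tO(n^2\tau+n^3)$ by Proposition~\ref{gcd-comp}, and a plain summation already gives $\tO(n^3\tau+n^4)$; only the very first gcd (against the full-size $R^\star$ of magnitude $(N,\Lambda+N)$) contributes the term $\tO(\max(n,N)(N\tau+n\Lambda+Nn))$. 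Your magnitude bound $(N,\tO(N+\Lambda))$, obtained only from divisibility of $R^\star$, is too weak: even granting $d_\ell\le n$ for $\ell<n_y$, it leaves a bitsize of $\tO(n+\Lambda)$, so each gcd costs $\tO(n^2\tau+n^3+n^2\Lambda)$ and the sum acquires a term $n^3\Lambda$, which is not dominated by $\max(n,N)\,n\Lambda$ when $n<N<n^2$.

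Moreover, the amortization you propose cannot rescue this, because the cost in Proposition~\ref{gcd-comp} is governed by the degrees and bitsizes of the \emph{inputs}, not by the degree of the gcd or by the drop $d_\ell-d_{\ell-1}$: a ``non-trivial'' iteration whose degree drops by $1$ costs as much as one whose degree drops by $n$, so weighting by $\deg R^\star_{\ell-1}$ and telescoping via $\sum_\ell\deg R^\star_\ell\le N$ does not absorb the extra factor. The claim that trivial iterations are detected by Euclidean reductions at total cost $\tO(n^3\tau+n^4)$ is likewise unsupported under your bitsize bounds ($\Lambda$ again enters the remainder sizes). Finally, a smaller point: you charge $\tO(N^2\Lambda)$ for computing $R^\star$, which is not dominated by the stated bound; the lemma (and the paper's proof) takes $R^\star$ as already available, its computation being accounted for elsewhere.
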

  \begin{proof} 
Since $R^\star$ is of degree at most $N$ and bitsize bounded by $\Lambda+N$, and $f_{n_y}(X)$ is of degree
at most $n$ and bitsize  $O(\tau)$, the computation of their gcd needs $\tilde O(\max(n,N)(N\tau+n\Lambda+Nn))$ bit operations according to Proposition \ref{gcd-comp}.
Since, for all $\ell\in [0,n_y-1]$, $\deg( R^\star_{\le \ell})\le n$ and the bitsize of the coefficients of the $ R^\star_{\le \ell}$ is at most $n+\tau$,
the complexity of computing all $( R^\star_{\le \ell})_{\ell\in[0,n_y-1]}$ is in $O(n^3\tau+n^4)$.

It remains to compute the  $R^\star_{\ell}$ themselves by performing the exact division of $ R^\star_{\le \ell}$ by $R^\star_{\le \ell-1}$. This takes $O(N\Lambda+N^2)$ binary operations for $\ell=n_y$ and 
$O(n\tau+n^2)$ binary operations for each $\ell<n_y$.
\end{proof}

The proof of Proposition \ref{prop:computinggcddeg} uses a family of polynomials $\Delta^\star_{\ell,k}$
defined as follows.
We further denote 
$\sDisc_{\ell,k} (X)$ the  $k$-th subdiscriminant of $F_\ell$ considered as a polynomial in $Y$.
 We also define the content-free elements of $\Z[X]$ by:
 \[  \Delta^\star_{\ell,\geq 0} (X) :=  R^\star_{\ell} ,\]
 and  for all
 $k \in \{ 0, \ldots, \ell - 1 \},$ 
  \begin{equation}
\Delta^\star_{\ell,\geq k+1} (X) := \gcd ( \Delta^\star_{\ell,\geq k}
 (X), \sDisc_{\ell,k} (X)),\quad
  \Delta^\star_{\ell,k} (X) := \frac{ \Delta^\star_{\ell,\geq k} (X)}{ \Delta^\star_{\ell,\geq k+1}
 (X)}.\label{factorisation-degree-gcd1}
  \end{equation}

Given $\ell \in[1,n_y]$ and $x \in \mathbb{C}$ such that $\deg_Y(F (x, Y)) = \ell$ (i.e. $R^\star_{\ell}(x)=0$), it is clear that, using Notation \ref{ref:trunc},
  \label{compmult}
 \[ \deg_Y(\gcd (F_{\ell} (x, Y), \partial_Y F_{\ell} (x, Y))) \geq  k
 \Longleftrightarrow  \Delta^\star_{\ell,\geq k} (x) = 0. \]
 \begin{equation}
 \deg_Y(\gcd (F_{\ell}(x, Y), \partial_Y F_{\ell} (x, Y))) = k
 \Longleftrightarrow \Delta^\star_{\ell,k} (x) = 0. \label{compmult2}
 \end{equation}

 \begin{lemma}\label{computesubres}
  The computation of all the polynomials $$(\sDisc_{\ell,k} (X))_{\ell\in[1,n_y],k \in [0, \ell - 1
  ]}$$ uses $\tO (n^5 \tau)$ bit operations. 
  \end{lemma}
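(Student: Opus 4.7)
The plan is to reduce the computation of the subdiscriminants of $F_\ell$ (with respect to $Y$) to the subresultant computation of Proposition~\ref{subres-comp}, applied iteratively for each $\ell\in[1,n_y]$. The key observation is that $n_y\le n$ since the total degree of $F$ is bounded by $n$, so there are only $O(n)$ values of $\ell$ to handle.

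For a fixed $\ell\in[1,n_y]$, the polynomial $F_\ell=\sum_{i=0}^\ell f_i(X)Y^i$ is a sub-sum of monomials of $F$ and therefore inherits the magnitude $(n,\tau)$ from $F$. Consequently, $\partial_Y F_\ell$ is of magnitude $(n,O(\tau+\log n))$, i.e. of magnitude $(n,\tilde O(\tau))$. Applying Proposition~\ref{subres-comp} to the pair $(F_\ell,\partial_Y F_\ell)$ viewed as polynomials in $Y$ with coefficients in $\Z[X]$, we simultaneously obtain all subresultant coefficients $\sr_k(F_\ell,\partial_Y F_\ell)\in\Z[X]$ for $k\in[0,\ell-1]$ in $\tO(n^4\tau)$ bit operations; each such coefficient is a polynomial in $X$ of magnitude $(O(n^2),\tilde O(n\tau))$.

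From Definition~\ref{def:resultant}, the subdiscriminant $\sDisc_{\ell,k}(X)$ is characterized by the identity $f_\ell(X)\cdot \sDisc_{\ell,k}(X)=\sr_k(F_\ell,\partial_Y F_\ell)(X)$, so it is obtained from the corresponding subresultant by an exact division by the univariate polynomial $f_\ell(X)\in\Z[X]$ (of magnitude $(n,\tau)$). Using Proposition~\ref{exact_division_comp}, each such division costs $\tO(n^3\tau)$ bit operations (as the dividend has magnitude $(O(n^2),\tilde O(n\tau))$), and there are at most $\ell\le n$ divisions to perform for a given $\ell$; this division step is thus absorbed by the cost of the subresultant computation for that $\ell$.

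Summing over $\ell\in[1,n_y]\subseteq[1,n]$, the total bit complexity is $O(n)\cdot \tO(n^4\tau)=\tO(n^5\tau)$. There is no real obstacle in the proof: the entire argument amounts to carefully counting that Proposition~\ref{subres-comp} is invoked $n_y\le n$ times and that recovering the subdiscriminants from the subresultants is a lower-order step.
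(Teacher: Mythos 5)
Your proof is correct and follows essentially the same route as the paper, whose entire argument is the one-line observation that Proposition~\ref{subres-comp} is invoked $n_y\le n$ times at a cost of $\tO(n^4\tau)$ each. The only difference is that you additionally spell out the exact division by $f_\ell$ to pass from subresultant coefficients to subdiscriminants, a lower-order step the paper elides by identifying the two.
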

  
  \begin{proof} The claim follows clearly from
 \cite[Prop.~8.46]{BPRbook2},\cite[\S 11.2]{GG} 
  since there are $n_y\le n$ lists of sudiscriminants (i.e. subresultant coefficients) to compute.
\end{proof}

   \begin{lemma}\label{pro:degrees}
The computation of all the polynomials $(\Delta^\star_{\ell,k} (X))_{\ell\in[1,n_y],k \in [0, \ell - 1]}$ uses a number of bit operations bounded by 
      $$\tO (n\max(N,n^2)(N\tau+n\Lambda+N n)+n^5 \tau + n^6).$$
  \end{lemma}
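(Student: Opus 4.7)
The plan is to organize the computation into three phases and argue that the dominant cost comes from the subdiscriminant step. First, I would invoke Lemma~\ref{lemmacompdeg} to produce all the polynomials $R^\star_\ell$, $\ell \in [1, n_y]$, within the announced budget. Second, I would invoke Lemma~\ref{computesubres} to produce all the subdiscriminants $\sDisc_{\ell,k}$, which contributes the $\tO(n^5 \tau)$ term. Third, for each $\ell$, I would process the chain from $k=0$ upwards by iterating the two operations prescribed by Equation~(\ref{factorisation-degree-gcd1}): the gcd $R^\star_{\ell,\ge k+1} := \gcd(R^\star_{\ell,\ge k}, \sDisc_{\ell,k})$ (via Proposition~\ref{gcd-comp}) followed by the exact division $R^\star_{\ell,k} := R^\star_{\ell,\ge k} / R^\star_{\ell,\ge k+1}$ (via Proposition~\ref{exact_division_comp}).

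The main work is the cost analysis of the third phase, and the right bound rests on two structural observations. First, every $R^\star_{\ell,\ge k}$ divides $R^\star$, which has magnitude $(N, O(\Lambda + N))$; hence by Mignotte's bound (Proposition~\ref{Mignotte}) each such polynomial has coefficient bitsize $O(\Lambda + N)$, independently of $(\ell,k)$. Second, the degrees amortize: since $R^\star_{\ell,\ge k} = \prod_{k'\ge k} R^\star_{\ell,k'}$, we have
\begin{equation*}
\sum_{k=0}^{\ell-1} \deg(R^\star_{\ell,\ge k}) = \sum_{k'} (k'+1)\deg(R^\star_{\ell,k'}) \le \ell \cdot \deg(R^\star_\ell),
\end{equation*}
and summing over $\ell$ gives $\sum_{\ell,k} \deg(R^\star_{\ell,\ge k}) \le n\,\deg(R^\star) \le nN$. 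Meanwhile, the $\sDisc_{\ell,k}$ are of magnitude $(O(n^2), O(n\tau))$ by Proposition~\ref{subres-comp}.

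I would then feed these facts into Proposition~\ref{gcd-comp}: the gcd between a factor of $R^\star$ of degree $d_{\ell,k}$ and a subdiscriminant of magnitude $(O(n^2), O(n\tau))$ costs $\tO(\max(d_{\ell,k}, n^2)(d_{\ell,k}\, n\tau + n^2(\Lambda + N)))$, and the exact division is dominated by the same quantity. Summing over the $O(n^2)$ pairs $(\ell,k)$ and invoking the amortization $\sum_{\ell,k} d_{\ell,k} \le nN$ together with $d_{\ell,k} \le N$, one bounds each contribution by $\tO(n \max(N,n^2)(N\tau + n\Lambda + Nn))$, which combined with the costs of the first two phases yields the target estimate.

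The delicate point, and what I expect to be the main obstacle, is organizing the per-pair bounds so that the factor of $O(n^2)$ from the number of pairs is absorbed by the amortization: a naive application of Proposition~\ref{gcd-comp} to each pair independently and then summing loses a factor in the $N\tau$ and $\Lambda$ terms. The amortized inequality on $\sum_{\ell,k} d_{\ell,k}$, rather than a uniform bound $d_{\ell,k} \le N$, is what brings the total back within $\tO(n\max(N,n^2)(N\tau + n\Lambda + Nn))$ and matches the announced complexity.
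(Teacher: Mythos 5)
Your overall architecture (compute the $R^\star_\ell$, compute the $\sDisc_{\ell,k}$, then iterate gcd and exact division along each chain) matches the paper's, but your amortization in the third phase is too coarse and does not deliver the claimed bound. You control the $R^\star_{\ell,\ge k}$ by their divisibility into $R^\star$, getting bitsize $O(\Lambda+N)$ for each of the $O(n^2)$ polynomials and $\sum_{\ell,k}\deg(R^\star_{\ell,\ge k})\le nN$. Feeding this into Proposition~\ref{gcd-comp} leaves a term $\sum_{\ell,k}\max(d_{\ell,k},n^2)\cdot n^2(\Lambda+N)\ge n^2\cdot n^4(\Lambda+N)=n^6(\Lambda+N)$, and a term $\sum_{\ell,k} d_{\ell,k}^2\, n\tau$ which with $d_{\ell,k}\le N$ and $\sum d_{\ell,k}\le nN$ only gives $n^2N^2\tau$. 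Neither fits inside $\tO(n\max(N,n^2)(N\tau+n\Lambda+Nn)+n^5\tau+n^6)$: for instance $n^6\Lambda$ exceeds the target whenever $N<n^4$, and $n^2N^2\tau$ exceeds $nN^2\tau$ by a factor $n$. The $N$- and $\Lambda$-dependence simply must not be carried across all $O(n^2)$ pairs.

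The missing idea is that only the single pair $(\ell,k)=(n_y,0)$ genuinely involves a polynomial of magnitude governed by $(N,\Lambda)$; every other $R^\star_{\ell,\ge k}$ must be amortized against data of magnitude $(\operatorname{poly}(n),\tO(n\tau))$. Concretely, the paper uses Proposition~\ref{multmult0} to show that $(R^\star_{n_y,\ge k})^{k}$ divides $\Disc_Y(F)$, whence $\sum_{k\ge1}\varphi_{n_y,k}\in\tO(n^2)$ and, by multiplicativity of the Mahler measure together with Proposition~\ref{compareMahlerCoeff0}, $\sum_{k\ge1}\tau_{n_y,k}\in\tO(n\tau+n^2)$ --- bounds independent of $N$ and $\Lambda$. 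For $\ell<n_y$, the $R^\star_{\ell,\ge k}$ divide $R^\star_\ell$, which divides a coefficient $f_\ell$ of $F$ of magnitude $(n,\tau)$, giving the same kind of $(n,\tau)$-only amortization. Only the $(n_y,0)$ gcd is charged $\tO(\max(N,n^2)(Nn\tau+n^2\Lambda+Nn^2))$, which is exactly the source of the first summand in the lemma's bound, while all remaining pairs together contribute $\tO(n^5\tau+n^6)$. Without the divisibility into the discriminant (and into the $f_\ell$), your argument proves a valid but strictly weaker estimate.
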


   \begin{proof} 
   We compute first  all the polynomials $$(\sDisc_{\ell,k} (X))_{\ell\in[1,n_y],k \in [0, \ell - 1
  ]}$$ using Lemma \ref{computesubres}.
   
   Let $\delta_{\ell,k}$ be the degree of $\Delta^\star_{\ell,\geq k}$, and let $\tau_{\ell,k}$ be the maximal bitsize of the coefficients of $\Delta^\star_{\ell,\geq k}$.
   
   The roots of $\Delta^\star_{n_y,0}$  are exactly the roots $x$ of $R$ with
   $$\deg_Y(F(x,Y))=n_y\text{ and } \deg_Y(\gcd (F(x, Y), \partial_Y F (x, Y))) = 0.$$
  The computation of $\Delta^\star_{n_y,0}$ needs $\tO (\max(N,n^2)(Nn\tau+n^2\Lambda+N n^2))$ bit operations according to Proposition \ref{gcd-comp} and  Proposition \ref{exact_division_comp}.
   
  From Proposition~\ref{multmult0}, we conclude that each root of $\Delta^\star_{n_y,\geq k}$ for $k>0$ is also a root of $\Disc_Y(F)$ of multiplicity at
  least $k$. Since $\Delta^{\star}_{n_y,\geq k}$ divides $R^\star$, it is square-free and $(\Delta^{\star}_{n_y,\geq k})^{k}$ divides $\Disc_Y(F)$, and thus
  $$\delta_{n_y,k} \leqslant \frac{n (2 n - 1)}{k},\quad\text{and } \sum_{k = 1}^{n_y}
  \delta_{n_y,k} \in \tO (n^2) .$$
  Moreover,
(\ref{comparelog30}) yields that $\tau_{n_y,k}
  \leqslant \logmea (\Delta^\star_{n_y,\geq k}) + \delta_{n_y,k}$. Since 
   $\Delta^{\star}_{n_y,\geq k}$
   divides
  $\Disc_Y(F)$ and since the Mahler measure is multiplicative (see \ref{multiplicative}), we have $$\sum_{k = 1}^{n_y} \logmea (\Delta^\star_{n_y,\geq k}) \leqslant \logmea (\Disc_Y(F)) \in \tO (n
  \tau),$$ and thus 
  $$\sum_{k = 1}^{n_y} \tau_{n_y, k} \in \tO ( n \tau+n^2).$$
  
   On the other hand, for $\ell \in[1,n_y-1]$,  $\Delta^\star_{\ell,\geq k}$ is a divisor of $R^\star_\ell$, so that
   $$\sum_{k = 0}^\ell \delta_{\ell,k} \le (\ell+1) \cdot \deg(R^\star_{\ell}),\sum_{\ell=1}^{n_y-1}\deg(R^\star_{\ell})\le n_y\le n,$$ 
 and
    $$\sum_{\ell=1}^{n_y-1} \sum_{k = 0}^\ell
  \delta_{\ell,k} \in \tO (n^2).$$
  As above, we use 
  (\ref{comparelog30}) to show that
  $\tau_{\ell,k}
  \leqslant \logmea (\Delta^\star_{\ell,\geq k}) + \delta_{\ell,k}$. Since $\Delta^\star_{\ell,\geq k}$ is a divisor of $R^\star_\ell$, which is a divisor of
  $f_{\ell}$, and $$\sum_{k =0}^\ell \logmea (\Delta^\star_{\ell,\geq k}) \leqslant \logmea (f_\ell) \in \tO (\tau),$$ we conclude that
  $$\sum_{\ell=1}^{n_y-1}\sum_{k = 0}^\ell \tau_{\ell,k}\in \tO (n^2 + n \tau).$$
  
    Finally
\begin{equation}\label{phi}
\sum_{k = 1}^{n_y}
  \delta_{n_y,k}+\sum_{\ell=1}^{n_y-1} \sum_{k = 0}^\ell
  \delta_{\ell,k} \in \tO (n^2)
  \end{equation}
  \begin{equation}\label{tau}
 \sum_{k = 1}^{n_y-1}
  \tau_{n_y,k}+\sum_{\ell=1}^{n_y-1} \sum_{k= 0}^\ell
\tau_{\ell,k} \in \tO ( n \tau+n^2) .
  \end{equation}
  
  The computation of $\Delta^\star_{\ell,\geq k+1} (X)=\gcd (\Delta^\star_{\ell,\geq k} (X), \sDisc_{\ell, k} (X))$ for $(\ell,k)\not=(n_y,0)$ uses
  $$ \tO (\max (\delta_{\ell,k},n^2)(n^2 \tau_{\ell,k }+\delta_{\ell,k} n \tau))\in \tO (n^2(n^2 \tau_{\ell,k }+\delta_{\ell,k} n \tau))$$ bit operations
according to Proposition \ref{gcd-comp}.
  Finally, the computation of all
  the $\Delta^\star_{\ell,\geq k}$ needs 
  $$\tO (\max(N,n^2)(Nn\tau+n^2\Lambda+N n^2)+n^5 \tau + n^6)$$ bit operations using (\ref{phi}) and (\ref{tau}).
  
  It remains to compute  the  $\Delta^\star_{\ell,k}$ themselves by performing the exact division of $ \Delta^\star_{\ell,\geq k}$ by $\Delta^\star_{\ell,\geq k+1}$. This takes $O(N\Lambda+N^2)$ binary operations for $(\ell,k)=(n_y,0)$,
$O(n^2\tau+n^3)$ binary operations for each $(n_y,k),$ with $k \not=0$,
and $O(n\tau+n^2)$ binary operations for each $(\ell,k),$ with $\ell <n_y$.
 
\end{proof}

\begin{proof}[Proof of Proposition \ref{prop:computinggcddeg}] 
In order to determine $n(x)$ for every $x\in V_{\mathbb{C}}(f)$, we use
Equation (\ref{fixdegree}),
Lemma \ref{lemmacompdeg} and Proposition  \ref{comparingroots}.

Similarly in order to determine $k(x)$ for every $x\in V_{\mathbb{C}}(f)$, we use 
Equation (\ref{compmult2}), Lemma \ref{pro:degrees} and   Proposition  \ref{comparingroots}.
\end{proof}

\subsection{Bivariate root isolation}
 
We now bound the complexity of computing the roots of all polynomials $F(x,Y)$, where $x$ runs over all roots of $R$, using
Proposition  \ref{sagraloff-isolation}.

\begin{proposition}\label{thm:costisolation}
Let 
$R\in\Z[X]$ be
a non-zero polynomial
 of magnitude 
bounded by
$(N,\Lambda)$, let $F\in\Z[X,Y]$ be of magnitude 
bounded by
$(n,\tau)$, and suppose that  
$$V_{\mathbb{C}}(R,F)=\{(x,y)\in \mathbb{C}^2\mid R(x)=F(x,y)=0\}$$
is finite. 
Then, it holds:
\begin{itemize}
\item[(a)] Using $$ \tO(N^2\Lambda+N^3+n\cdot\max(n^2,N)\cdot(N\tau+n\Lambda+Nn)+n^5 \tau +n^6)$$
bit operations, we can compute 
\begin{itemize}
\item[(a.1)] well-isolating disks ${\mathcal D}_{x,y}\subset \C$ for all complex roots $y$ of all polynomials $F(x,Y)$, where the sum of the bitsizes of the radii and centers of all disks ${\mathcal D}_{x,y}$ is bounded by $\tO(n(N\tau+n\Lambda+Nn))$,
\item[(a.2)] the corresponding multiplicities 
$\mu(x,y)$
 for each of the complex roots $y$ of all polynomials $F(x,Y)$, and
\item[(a.3)] dyadic approximation $\tilde{\sigma}_{x,y}$ of the separations $\sep(y,F(x,Y))$ such that $$\frac{1}{2}\cdot\sep(y,F(x,Y))<\tilde{\sigma}_{x,y}<2\cdot \sep(y,F(x,Y))$$ for all roots $x$ (resp. $y$) of $R$ (resp. $F(x,Y)$).
\end{itemize}
\item[(b)] Let $V\subset\{(x,y)\in\mathbb{C}^2:R(x)=0\text{ and }F(x,y)=0\}$, and $L$ be a positive integer. Then, we can further refine all isolating disks ${\mathcal D}_{x,y}$, with $(x,y)\in V$, to a size less than $2^{-L}$, in a number of bit operations bounded by 
$$ \tO(N^2\Lambda+N^3+n\cdot\max(n^2,N)\cdot(N\tau+n\Lambda+Nn)+L\cdot(N\cdot\mu+n^2\cdot\sum_{x \in \pi_X(V)}\mu_{x})),$$
where $\mu_{x}:=\max_{(x,y) \in V}
\mu(x,y)$, and $\mu=\max_{x \in \pi_X(V)}\mu_{x}$.
\end{itemize}
\end{proposition} 

\begin{proof}
First compute  $$n(x):=\deg F(x,Y),k(x):=\deg\gcd(F(x,Y),\partial_Y F(x,Y))$$  
for all roots $x$ of $R$.
According to Proposition~\ref{prop:computinggcddeg}, this computation needs  $$\tO (n\max(N,n^2)(N\tau+n\Lambda+N n)+n^5 \tau + n^6)$$ bit operations. Further notice that $F(x,Y)$ has $m(x)=n(x)-k(x)$ distinct complex roots.

Let $x$ be a fixed complex root of $R$, $\tau_{x}\in\Z$ such that $2^{-\tau_x}<\LCF(F(x,Y))\le 2^{-\tau_x+2}$, and $F_x:=2^{\tau_x}\cdot F(x,Y)$. Notice that $F_x$ has the same roots as $F(x,Y)$, and the leading coefficient of $F_x$ has absolute value in between $1$ and $4$. Then according to Proposition~\ref{sagraloff-isolation}, we can compute well-isolating disks ${\mathcal D}_{x,y}$ for the roots of $F_x$ (and thus also for the roots of $F(x,Y)$) as well as the multiplicities 
$\mu(x,y)$
 in a number of bit operations that is bounded by 
\begin{align}\label{cost:isolation}
\tO(n(n^2+n\cdot\logmea(F_x)
+\logGdisc(F_x))).
\end{align}
using Proposition~\ref{GammaDeltaSigma};
For this, we need an approximation of $F_x$ to an absolute precision that is bounded by
\begin{align}\label{error:input}
\rho_x
\in\tO(n\logmea(F_x)+\logGdisc(F_x)).
\end{align}
Using Proposition \ref{pro:bounds}, Proposition~\ref{thm:bounds} now yields 
the bound 
\[
\sum_{x \in V_{\mathbb{C}}(R)} \rho_x\in\tO(n\cdot (N\tau+n\Lambda+Nn))
\]
for the sum of the bound (\ref{error:input}) for the needed input precision over all $x\in V_{\mathbb{C}}(R)$.

It remains to show that we can compute sufficiently good approximations of the polynomials $F_x$ in a number of bit operations that is bounded by $\tO(n^2\cdot(N\tau+n\Lambda+Nn))$. In order to compute a $\rho_x$-bit approximation of $F _x$, we first compute a $\tau_x$ with $2^{-\tau_x-2}<\LCF(F(x,Y))\le 2^{-\tau_x}$ as well as a $(\rho_x+\tau_x)$-bit approximation of $F(x,Y)$ and then shift the coefficients of the latter approximation of $F(x,Y)$ by $\tau_x$ bits. We first estimate the cost for the computation of the $\tau_x$'s. According to Proposition~\ref{univariate-evaluationbis}, we can compute an approximation $\tilde{c}_x$ of $c_x:=|\LCF(F(x,Y))|$ with $|c_x-\tilde{c}_x|<2^{-L}$ using $\tO(n(L+n\log(\max(1,|x|)))+\tau)$ bit operations, and as input we need an $\tO(L+n\log\max(1,|x|)+\tau)$-bit approximation  of $x$. 
Hence, when choosing $L=2,4,8,\ldots$, we succeed in computing $\tau_x$ for an $L_x$ of size $$L_x\in O(|\log (|\LCF(F(x,Y))|)|+\tau+n+n\log(\max(1,|x|))), $$ and the cost for the evaluation is bounded by 
$\tO(nL_x)$
bit operations. When summing up the latter bound over all $x$ and using 
Proposition~\ref{generalunivariate} (b) (with $g_i=f_{n_y-i}$, the sequence of coefficients of $F\in\Z[X][Y]$), we obtain
\begin{align*}
\sum_{x \in V_{\mathbb{C}}(R)}\tO(nL_x)&\in n\cdot \tO(\sum_{x \in V_{\mathbb{C}}(R)} (|\log( |\LCF(F(x,Y))|)|+\tau+n+n\log(\max(1,|x|))))\\
&\in \tO(n^2(N\tau+n\Lambda+Nn))
\end{align*}
Notice that the above computation further implies that $\sum_{x\in V_{\mathbb{C}(R)}}\tau_x\in\tO(n(N\tau+n\Lambda+Nn))$.

For estimating the cost of computing sufficiently good approximations of $F(x,Y)$, we can again use Proposition~\ref{univariate-evaluationbis}. We conclude that the cost for computing a $(\rho_x+\tau_x)$-bit approximation of $F(x,Y)$ is bounded by $\tO(n^2(\tau+n+n\log(\max(1,|x|))+\rho_x+\tau_x))$ bit operations and that we need a $\tO(\tau+n+n\log(\max(1,|x|))+\rho_x+\tau_x)$-bit approximation of $x$. Summing up the cost over all $x$ then yields
\begin{align*}
&\tO(n^2N(\tau+n)+n^3\sum_{x \in V_{\mathbb{C}}(R)} \log(\max(1,|x|))+n^2\sum_{x \in V_{\mathbb{C}}(R)} (\rho_x+\tau_x))
\end{align*}
which is in
\begin{align*}\tO(n^3\cdot(N\tau+n\Lambda+Nn)).
\end{align*}
Finally, we have to bound the cost for computing sufficiently good approximations of the roots $x$ of $R$. Notice that each term $\tau+n+n\log(\max(1,|x|))+\rho_x+\tau_x$ is bounded by $\tO(n\cdot(N\tau+n\Lambda+Nn))$ for each root $x$ of $R$; in fact, the latter bound even applies to the sum of all these terms. Hence, it suffices to compute approximations of all roots of $R$ to an absolute precision of size $\tO(n\cdot (N\tau+n\Lambda+Nn))$. According to Proposition~\ref{sagraloff-isolation-integer}, the cost for the computation of well-isolating disks of corresponding size is bounded by $\tO(N^2\Lambda+N^3+Nn(N\tau+n\Lambda+Nn))$. The bound on the sum of the bitsizes of the radii and the centers of the disks ${\mathcal D}_{x,y}$ follows directly from Proposition~\ref{pro:bounds} and Proposition~\ref{thm:bounds}.
This concludes the proof of Parts a) and b).
For Part (c), notice that $$\frac{\min_{y'\neq y:F(x,y')=0}|m_{x,y}-m_{x,y'}|}{\sep(y,F(x,Y))}\in (1-1/32,1+1/32),$$ where $m_{x,y}$ denotes the center of $\mathcal{D}_{x,y}$. Hence, approximations $\tilde{\sigma}_{x,y}$ with the required properties can directly be obtained from the distances between the centers $m_{x,y}$. Since the sum of the bitsizes of all centers $m_{x,y}$ is bounded by 
$\tO(\sum_{x \in V_{\mathbb{C}}(R)} \logmea(F_x)+\logsep(F_x^\star))\in\tO(n(N\tau+n\logmea+Nn))$, the claim follows.

It remains to prove the last claim on the cost for refining the disks $\mathcal{D}_{x,y}$, with $(x,y)\in V$, to a size less than $2^{-L}$. For this, we use Proposition~\ref{sagraloff-isolation-integer} (c), which shows that, for a fixed $x$, we can refine all disks $\mathcal{D}_{x,y}$ using 
$$\tO(n(L\cdot \mu_{x}+n^2\cdot\logmea(F_x)+n\logGdisc(F_x)+n^3))$$
bit operations. Now, summing the latter bound over all $x$ yields $$\tO(n^2(N\tau+n\Lambda+Nn)+nL\cdot\sum_{x \in V_{\mathbb{C}}(R)}\mu_{x}).$$ For the input precision $\rho_x$ to which we need to approximate the polynomial $F_x$, we obtain the bound  
$$\rho_x
\in\tO(L\cdot \mu+n\cdot\logmea(F_x)+\logGdisc(F_x)+n^2).$$
 Again, using the same argument as above, it follows that sufficiently good approximations of the polynomials $F_x$ can be computed using 
 $$
 \tO(n^3\cdot(N\tau+n\Lambda+Nn)+L(N\mu+ n^2\cdot\sum_{x \in V_{\mathbb{C}}(R)} \mu_{x})) 
 $$
 bit operations.
\end{proof}

Theorem \ref{firstmain} follows from  
 Proposition \ref{thm:costisolation}.

\begin{proposition}\label{thm:comparinginfibers}
Let 
$R\in\Z[X]$
a non-zero polynomial
 of magnitude 
bounded by
$(N,\Lambda)$, $F,G\in\Z[X,Y]$ be of magnitude
bounded by
 $(n,\tau)$, and $H:=F\cdot G$. Suppose moreover that
$$V_{\mathbb{C}}(R,H)=\{(x,y)\in \mathbb{C}^2\mid R(x)=H(x,y)=0\}$$
is finite.
Using a number of bit operations bounded by 
\[
 \tO(N^2\Lambda+N^3+n^5\tau+n^6+n\cdot\max(n^2,N)\cdot (N\tau+n\Lambda+Nn))
\] 
we can can carry out the following computations for all complex roots $x$ of $R$:
\begin{itemize}
\item[(a)] computing well-isolating disks $\mathcal{D}_{x,y}$ for all complex roots $y$ of the polynomial $H(x,Y)$ together with the corresponding multiplicities $\nu(x,y)=\mult(y,H(x,Y))$.
\item[(b)] determining for each root $y$ of $H(x,Y)$ whether $y$ is a root of $F(x,Y)$, $G(x,Y)$, or both. If $x$ as well as $y$ are real, we can further determine the sign of $F(x,y)$ and $G(x,y)$.
\end{itemize}
\end{proposition}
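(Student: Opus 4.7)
The plan is to reduce both parts to repeated applications of Proposition~\ref{thm:costisolation} (which is the content of Theorem~\ref{firstmain}). For part (a), I would observe that $H=F\cdot G$ has magnitude $(2n, 2\tau+O(\log n))$, which is still $(O(n),O(\tau))$ up to logarithmic factors absorbed by the $\tO$ notation. Applying Proposition~\ref{thm:costisolation}(a) to the pair $(R,H)$ therefore produces, within the claimed bit complexity, well-isolating disks $\mathcal{D}_{z,z'}$ together with the multiplicities $\mu(z,z')$ for every complex root $z$ of $R$ and every complex root $z'$ of $H(z,-)$.

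For part (b), my plan is to apply Proposition~\ref{thm:costisolation}(a) separately to the pairs $(R,F)$ and $(R,G)$, yielding well-isolating disks $\mathcal{D}^{F}_{z,w}$ (resp.\ $\mathcal{D}^{G}_{z,w}$) for every complex root $w$ of $F(z,-)$ (resp.\ $G(z,-)$), together with the corresponding multiplicities. Since $V_{\mathbb{C}}(H(z,-))=V_{\mathbb{C}}(F(z,-))\cup V_{\mathbb{C}}(G(z,-))$ and $\mult(z',H(z,-))=\mult(z',F(z,-))+\mult(z',G(z,-))$, it suffices to match each $\mathcal{D}^{F}_{z,w}$ and each $\mathcal{D}^{G}_{z,w}$ with the unique disk $\mathcal{D}_{z,z'}$ it intersects, after refining all three families to a size smaller than a constant fraction of the minimum separation $\sep(z',H(z,-))$. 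Proposition~\ref{thm:costisolation}(b) applied to each of the three pairs, combined with the amortized bound $\sum_{z}\mu(z)\logsep(H(z,-))\in\tO(n(N\tau+n\Lambda+Nn))$ provided by Proposition~\ref{thm:bounds}(b), keeps the refinement cost within the advertised complexity. A root $z'$ of $H(z,-)$ is then identified as a root of $F(z,-)$, of $G(z,-)$, or of both, according to which of the refined families contribute disks that match $\mathcal{D}_{z,z'}$.

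For the sign step, suppose $z$ and $z'$ are real and let $G^{\sharp}\in\{F,G\}$ be the factor of which $z'$ is \emph{not} a root (if $z'$ is a root of both, nothing needs to be computed). I would use Proposition~\ref{bivariate-evaluation} in an adaptive doubling scheme: compute an $L$-bit approximation $\tilde{\gamma}$ of $G^{\sharp}(z,z')$ for $L=1,2,4,\ldots$ and stop as soon as $|\tilde{\gamma}|>2^{-L+1}$, at which point $\sign\tilde{\gamma}=\sign G^{\sharp}(z,z')$. The process terminates at some
\[
L_{z,z'}\in\tO\bigl(|\log|G^{\sharp}(z,z')||+n(\log\max(1,|z|)+\log\max(1,|z'|))+\tau\bigr),
\]
and the needed dyadic approximations of $z,z'$ are obtained by further refining the already computed isolating disks. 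The main obstacle is to verify that the total cost of the sign evaluations stays within the stated complexity. This is where Proposition~\ref{generalbivariate}(b), applied to $R$ and the family $\{F,G\}$, becomes essential: it amortizes $\sum_{(z,z')}\mu(z)|\log|G^{\sharp}(z,z')||$ by $\tO((n\tau)N+(\Lambda+N)n^{2})$, and combined with the bound on $\sum_{z}\mu(z)\logmeah(F(z,-))$ from Proposition~\ref{pro:bounds} and the evaluation cost $\tO(n^{2}L_{z,z'})$ from Proposition~\ref{bivariate-evaluation}, the whole sign-determination step fits inside the target bound $\tO(n\max(n^{2},N)(N\tau+n\Lambda+Nn))$.
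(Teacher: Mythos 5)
Your overall architecture matches the paper's for part (a) and for the matching idea in part (b), but the refinement step that makes the matching work is exactly where your argument does not go through as written. You propose to refine the disks ``to a size smaller than a constant fraction of the minimum separation $\sep(z',H(z,-))$'' and then invoke Proposition~\ref{thm:costisolation}(b) together with the amortized bound of Proposition~\ref{thm:bounds}. The problem is twofold. First, you cannot target that size directly, because knowing which root $z'$ of $H(z,-)$ a given disk $\mathcal{D}^F_{z,w}$ corresponds to is precisely what the matching is supposed to determine; the relevant separation is a hidden parameter. Second, if you instead refine \emph{all} disks uniformly to $2^{-L}$ with $L$ the worst case over all $(z,z')$, then $L$ can itself be as large as $\tO(n(N\tau+n\Lambda+Nn))$ (the amortized bound controls only the \emph{sum} of the $|\log\sep|$, not each term), and the term $L\cdot(N\mu+n^2\sum_z\mu_z)$ in Proposition~\ref{thm:costisolation}(b) then exceeds the budget by polynomial factors. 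What is actually needed --- and what constitutes the bulk of the paper's proof --- is an adaptive round-based scheme: refine to size $2^{-2^\ell}$ for $\ell=1,2,\ldots$, retain only the still-unmatched pairs in a shrinking set $V^\ell$, and bound $\sum_\ell 2^\ell(N\mu^{[\ell]}+n^2\sum_z\mu_z^{[\ell]})$ by relating membership in $V^\ell$ to the condition $2^\ell\le O(|\log\sep(z'',H(z,-))|)$ and then summing the multiplicity-weighted log-separations via Proposition~\ref{thm:bounds}. You describe exactly this doubling strategy for the sign step but omit it for the matching step, where it is indispensable; asserting that the amortized bound ``keeps the refinement cost within the advertised complexity'' skips the core of the argument (cf.\ Remark~\ref{thankstotest}).

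Two smaller points. The paper matches only the $F$-disks against the $H$-disks and then decides whether $z'$ is also a root of $G(z,-)$ from the identity $\mult(z',F(z,-))+\mult(z',G(z,-))=\mu(z,z')$, which halves the work relative to your matching of both families (a harmless difference). For the signs, the paper avoids numerical evaluation altogether: knowing the degree and the sign of the leading coefficient of $F(z,-)$, it reads off the sign at $-\infty$ and propagates it across the real roots of $H(z,-)$ using the parity of the multiplicities --- data already in hand. Your evaluation-based alternative is plausible but needs extra care: Proposition~\ref{generalbivariate}(b) requires that some member of the family be nonzero at \emph{every} point of the variety, which fails at common roots of $F(z,-)$ and $G(z,-)$, so you would have to first strip those out (as Proposition~\ref{comparingroots} does in the univariate setting) before the amortization applies.
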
 

\begin{proof}
Part (a) already follows from Lemma~\ref{pro:degrees} and Proposition~\ref{thm:costisolation} as $H$ has magnitude
bounded by
 $(O(n),O(\log (n)+\tau))$. We 
may further assume that, for all complex roots $x$ of $R$, we have already computed
\begin{itemize}
\item well-isolating disks $\mathcal{D}^F_{x,y'}$ 
and $\mathcal{D}^G_{x,y''}$ for all complex roots $y'$ and $y''$ of the polynomials $F(x,Y)$ and $G(x,Y)$, respectively, 
\item the corresponding multiplicities $\mult(y',F(x,Y))$ and $\mult(y'',G(x,Y))$,
\item the degrees of the polynomials $F(x,Y)$ and $G(x,Y)$, and
\item the signs of the leading coefficients of the polynomials $F(x,Y)$ and $G(x,Y)$ in case that $x$ is a real root of $R$.
\end{itemize}

For (b), we now refine each disk $\mathcal{D}^F_{x,y'}$ such that it intersects with exactly one of the disks $\mathcal{D}_{x,y}$. If this is the case, then it holds that $y=y'$. In addition, $y$ is also a root of $G(x,Y)$ if and only if $\nu(x,y)>\mult(y',F(x,Y))$ as $\mult(y,F(x,Y))+\mult(y,G(x,Y))=\nu(x,y)$. Hence, for each root $y$ of $H(x,Y)$, we also know its multiplicity as a root of $F(x,Y)$ and $G(x,Y)$. When restricting to the real roots of $R$, this implies that we can directly deduce the sign of $F(x,y)$ (resp. $G(x,y)$) at each real root $y$ of $H(x,Y)$ that is not a root of $F(x,Y)$ (resp. $G(x,Y)$). Namely, from the sign of the leading coefficient of $F(x,Y)$ (resp. $G(x,Y)$) and its degree, we know its sign at $\pm\infty$, and the polynomial changes signs exactly at those roots of $H$ that are roots of $F(x,Y)$ (resp. $G(x,Y)$) of odd multiplicity.  

It remains to bound the cost for the refinement of the disks $\mathcal{D}^F_{x,y'}$. We proceed in rounds enumerated by $\ell=1,2,3,\ldots$: Initially, we set $Z^1:=V_{\mathbb{C}}(R,F)$. In the $\ell$-th round, we refine each of the isolating disks $\mathcal{D}^F_{x,y'}$ for all $(x,y')\in Z^\ell$ to a size less than $2^{-2^{\ell}}$ and check whether it intersects exactly one of the isolating disks $\mathcal{D}_{x,y}$ for the roots of $H(x,Y)$. If this is the case, we know that $y=y'$. 

 After having treated all points in $Z^{\ell}$, we set $Z^{\ell+1}$ to be the set of all $(x,y')$ in $Z^\ell_x$ for which the isolating disk $\mathcal{D}^F_{x,y'}$ intersects more than one of isolating disks for $H(x,Y)$. That is, $Z^{\ell}$ is the set of all $(x,y')\in Z^1$ for which we have not determined a corresponding root $y$ of $H(x,Y)$ with $y=y'$ after the $\ell$-th round. We then proceed with the $(\ell+1)$-st round. We stop as soon as  $Z^{\ell}$ becomes empty, in which case, each root of each $F(x,Y)=0$ is matched to a corresponding root of $H(x,Y)$.
Notice that, for each $(x,y')$, we must succeed in round $\ell_{x,y'}$, for some $\ell_{x,y'}$ with
$$2^{\ell_{x,y'}}\le O(|\log(\sep(y',H(x,Y)))|).$$ From the amortized bounds on the separation of the roots  (Proposition~\ref{thm:bounds}), we have that $|\log(\sep(y',H(x,Y)))|\in \tO(n(N\tau+n\Lambda+Nn))$, thus
we are done after $\ell_{\max}$ rounds for  
\begin{equation}\label{kappa0}
\ell_{\max}=\max_{x,y'} \ell_{x,y"}+1\in O(\log (n(N\tau+n\Lambda+Nn))).
\end{equation}
According to Proposition~\ref{thm:costisolation}, the cost for refining the disks $\mathcal{D}^F_{x,y'}$ for all $(x,y')\in Z^\ell$ to a size less than $2^{-2^{\ell}}$ is bounded by 
\begin{align*}
\sum_{\ell=1}^{\ell_{\max}} \tO(N^2\Lambda+N^3+n\max(n^2,N)(N\tau+n\Lambda+Nn)+2^{\ell}(N\mu^{[\ell]}+n^2\sum_{x\in\C:R(x)=0}\mu_x^{[\ell]}))
\end{align*}
bit operations,
where 
\[
\mu_{x}^{[\ell]}:=
\begin{cases}
\max_{(x,y')\in Z^{\ell}} \mult(y',F(x,Y))&\text{if  there exists }(x,y')\in Z^{\ell}\\
0 &\text{otherwise.}
\end{cases}
\]
 and $\mu^{[\ell]}:=\max_{x:R(x)=0}\mu^{[\ell]}_x.$
 Since $\ell_{\max}$ is bounded by $O(\log (n(N\tau+n\Lambda+Nn)))$, we are left to bound the sum
 \begin{align}\label{sumalongfiber}
 \sum_{\ell=1}^{\ell_{\max}} 2^{\ell}\cdot (N\cdot\mu^{[\ell]}+ n^2\cdot \sum_{x\in\C:R(x)=0}\mu_x^{[\ell]}).
 \end{align}
If, for a fixed root $x$ of $R$, there exists  a $(x,y')\in Z^{\ell}$, then let $(x,y'_{\ell,x})\in Z^\ell$ be a point in $Z^\ell$ with $\mult(y'_{\ell},F(x_\ell,Y))=\mu^{[\ell]}$. In other words, $y'_{\ell,x}$  maximizes the multiplicity within the fiber for all roots $y'$ of $F(x,Y)$ with $(x,y')\in Z^{\ell}$. In addition, let $(x_\ell, y'_\ell)$ be a point in $Z^\ell$ that maximizes the multiplicity over all fibers. Thus, the sum in (\ref{sumalongfiber}) can be rewritten as
\[
N\cdot\sum_{\ell=1}^{\ell_{\max}} \mult(y'_\ell,F(x_\ell,Y))\cdot 2^{\ell}+n^2\cdot\sum_{\ell=1}^{\ell_{\max}} \sum_{x\in\C:R(x)=0}\mult(y'_{\ell,x},F(x,Y))\cdot 2^\ell.
\]
Notice that a pair $(x,y')\in Z^{\ell_x}$ can appear at most $\ell_{\max}$ many times in each of the above sums. In addition, it holds that $(x,y')\notin Z^{\ell}$ if $\ell>\ell_{x,y'}$ for some $2^{\ell_{x,y'}}\in O(|\log(\sep(y',H(x,Y)))|)$. Hence, the above sum is upper bounded by
\begin{align}\label{sumalongfiber2}
O(\ell_{\max}\cdot (n^2+N)\cdot \sum_{(x,y')\in Z^1}\mult(y',F(x,Y))\cdot |\log(\sep(y',H(x,Y)))|)
\end{align}
Since $\mult(y',F(x,Y))\le \mult(y',H(x,Y))$ and $\ell_{\max}\in O(\log (n(N\tau+n\Lambda+Nn)))$, we thus conclude from Proposition~\ref{thm:bounds} that (\ref{sumalongfiber2}) is upper bounded by
\[
\tO(n\cdot(n^2+N)\cdot (N\tau+n\Lambda+Nn)).
\]
Finally, the cost of checking whether an isolating disk $\mathcal{D}^F_{x,y'}$ intersects exactly one of the isolating disks $\mathcal{D}_{x,y}$ for the roots of $H(x,Y)$ is bounded by 
$O(n\cdot(\logsep(H(x,Y)^\star)+\logmea(H(x,Y)))$ as there are $n$ comparisons between disks with radii and centers of bitsize 
$O(\logsep(H(x,Y)^\star)+\logmea(H(x,Y))$. The total cost for all comparisons is thus bounded by 
\[
O(\kappa\cdot n^2\cdot \sum_{x:R(x)=0} [\logsep(H(x,Y)^\star)+\logmea(H(x,Y))]=\tilde{O}(n^3(N\tau+n\Gamma+Nn)),
\]
where the latter inequality follows from Proposition~\ref{pro:bounds} and Proposition~\ref{thm:bounds}.
Hence, our claim follows.
\end{proof}

We remark that the above considerations immediately yield a
new
 algorithm for solving a bivariate polynomial system that achieves the current record complexity bound for this problem.

\begin{corollary}\label{cor:bivariatesystems}
Let $F,G\in\Z[X,Y]$ be coprime polynomials of magnitude 
bounded by
$(n,\tau)$. Then, we can compute  isolating regions for all complex solutions of the system $F=G=0$ using $\tO(n^5\tau+n^6)$ bit operations.
\end{corollary}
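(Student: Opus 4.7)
The plan is to reduce to Proposition~\ref{thm:comparinginfibers} by using the $Y$-resultant of $F$ and $G$ as the eliminating univariate polynomial. Set $R(X) := \res_Y(F, G) \in \Z[X]$; since $F$ and $G$ are coprime, $R \not\equiv 0$. By Proposition~\ref{subres-comp}, $R$ has magnitude $(N, \Lambda)$ with $N \in O(n^2)$ and $\Lambda \in \tO(n\tau)$, and it can be computed in $\tO(n^4 \tau)$ bit operations. The $X$-coordinate of any complex common zero of $F$ and $G$ is a root of $R$.

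With this $R$, together with $F$, $G$, and $H := F \cdot G$, the hypotheses of Proposition~\ref{thm:comparinginfibers} are satisfied: $H$ has magnitude $(O(n), O(\tau + \log n))$, and $V_{\mathbb{C}}(R, H)$ is finite because a common vertical factor of $F$ and $G$ would contradict coprimality, so for every root $z$ of $R$ at least one of $F(z, -)$ and $G(z, -)$ is a nonzero univariate polynomial. The proposition then produces, for every root $z$ of $R$ and every root $z'$ of $H(z, -)$, a well-isolating disk $\mathcal{D}_{z, z'}$ together with a flag stating whether $z'$ is a root of $F(z, -)$, of $G(z, -)$, or of both. The $z'$ classified as common roots are exactly the $Y$-coordinates of the solutions of $F = G = 0$ lying over $z$. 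Coupling each with a well-isolating disk for $z$, obtained by running Proposition~\ref{sagraloff-isolation-integer} on $R$, yields isolating boxes in $\mathbb{C}^2$ for all solutions of the system.

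The complexity follows by substituting $N \in O(n^2)$ and $\Lambda \in \tO(n\tau)$ into the bound of Proposition~\ref{thm:comparinginfibers}. The three dominating terms become
\[
N^2 \Lambda \in \tO(n^5 \tau), \quad N^3 \in \tO(n^6), \quad n \cdot \max(n^2, N) \cdot (N\tau + n\Lambda + Nn) \in \tO(n^5\tau + n^6),
\]
so the total lies in $\tO(n^5\tau + n^6)$, comfortably absorbing the $\tO(n^4 \tau)$ cost of computing $R$ itself.

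The only technical point worth flagging is that $R$ has magnitude $(O(n^2), \tO(n\tau))$ rather than $(O(n), O(\tau))$, which is why Proposition~\ref{thm:comparinginfibers} had to be stated in a form that allows the eliminating polynomial to have degree and bitsize substantially larger than those of the bivariate polynomials. I expect no obstacle beyond this bookkeeping, since the genuinely algorithmic work — root isolation of $R$, isolation within each fiber of $H(z, -)$, and matching the roots of $F(z,-)$ against those of $H(z,-)$ to detect common roots — is already carried out by the invoked results.
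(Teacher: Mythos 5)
Your proposal follows essentially the same route as the paper's own proof: compute the resultant eliminating $Y$, invoke Proposition~\ref{thm:comparinginfibers} with $H=F\cdot G$, and substitute $N\in O(n^2)$, $\Lambda\in\tO(n\tau)$ into its bound; the complexity bookkeeping is correct. The one place where your argument is logically short is the verification that $V_{\mathbb{C}}(R,H)$ is finite. Coprimality of $F$ and $G$ only excludes a \emph{common} vertical factor, so it yields that at least one of $F(z,-)$, $G(z,-)$ is a nonzero polynomial — which is exactly what you conclude — whereas finiteness of $V_{\mathbb{C}}(R,H)$ requires \emph{both} to be nonzero, since $H(z,-)=F(z,-)\cdot G(z,-)$. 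If $F$ alone contains a vertical line $X=z$ that $G$ does not share (e.g.\ $F=XY$, $G=Y+1$), then $z$ is a root of $R=\res_Y(F,G)$ and $H(z,-)\equiv 0$, so the hypothesis of Proposition~\ref{thm:comparinginfibers} fails. To be fair, the paper's own one-line proof does not address this case at all. The standard repair is to first split off the contents of $F$ and $G$ with respect to $Y$ (their vertical-line components, computable within the stated budget as in Proposition~\ref{removeverticallines}), treat the finitely many solutions lying on those lines by isolating the corresponding fibers of the other polynomial, and apply Proposition~\ref{thm:comparinginfibers} to the content-free parts. With that amendment your proof is complete and coincides with the paper's.
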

\begin{proof}
Let $R=\res_X(F,G)$ be the resultant polynomial of $F$ and $G$, which can be computed using $\tO(n^4\tau+n^5)$ bit operations. Any common solution $(x_0,y_0)\in\mathbb{C}$ of $F=G=0$ yields a root $x_0$ of $R$, and then $y_0$ is a common root of $F(x_0,Y)$ and $G(x_0,Y)$. Vice versa, any common root $y_0$ of $F(x_0,Y)$ and $G(x_0,Y)$ yields a solution of $F=G=0$. According to Proposition~\ref{thm:comparinginfibers}, we can compute all common roots of $F(x,Y)$ and $G(x,Y)$ for all complex roots $x$ of $R$ using $\tO(n^5\tau+n^6)$ bit operations. Hence, the claim follows. 
\end{proof}

\begin{remark}\label{thankstotest}

In the proof of Proposition~\ref{thm:comparinginfibers}, we needed to recursively refine the isolating disks for the roots of $F(x,Y)$ until a certain test applies. That is, we needed to check whether a disk intersects exactly one of the isolating  disks of $H(x,Y)$. While this test itself is rather simple (and cheap), its success is directly related to a hidden parameter, which is in this case the separation of some specific (but unknown) root of the polynomial $H(x,Y)$. 

In the worst case, the cost for refining isolating disks for the polynomial $F(x,Y)$ until the test applies is rather large (i.e. it is comparable to our bound for the overall computation), and thus it would be very costly to refine all isolating disks to such a small size. Fortunately, this is not necessary as, for most roots of $F(x,Y)$, the success of the test is related to a root of $H(x,Y)$ with larger separation. In order to exploit this fact, we need to design adaptive algorithms in each of these cases and to use our amortized bounds on the separation of the roots (Proposition \ref{corodisc} and Proposition~\ref{thm:bounds}).

Three instances of such a situation appear in this paper, the first in Proposition \ref{thm:comparinginfibers} and the two others in the proof of Theorem \ref{sing-fibers} parts c.1 and c.2.
\end{remark}

\section{Computation of the topology}\label{sec:top}
\subsection{Preliminary definitions and results
}\label{sec:basic}

Let $P \in \Z [X, Y]$ be a square-free polynomial of magnitude
bounded by
 $(d,\tau)$. In addition, let
$$V_{\mathbb{R}} (P) = \{(x, y) \in \mathbb{R}^2 \mid P (x, y) = 0\}$$ be the real
algebraic curve defined by $P$, and 
$$V_{\mathbb{C}}(P) = \{(x, y) \in \mathbb{C}^2 \mid P (x, y) = 0\}$$
be the corresponding complex algebraic curve.

We first decompose
\begin{equation}\label{tildeP}
P(X,Y)=p(X)\cdot Q(X,Y)
\end{equation}
 with $p(X)\in \Z [X]$ and  $Q(X,Y)\in  \Z [X, Y]$ such that $Q(x,Y)$
never identically vanishes for  any $x \in \mathbb{C}.$ In more geometric terms, we separate the vertical lines contained in $V_{\mathbb{C}} (P)$ from the remaining part $V_{\mathbb{C}} (Q)$ of the curve, which does not contain any vertical lines.

\begin{proposition}\label{removelines}
We can compute $p(X)$ and $Q(X,Y)$ using $\tO(d^4+d^3\tau)$ bit operations. 
The polynomial $p(X)$ and $Q(X,Y)$
 have magnitude bounded by $(d,\tau+d+\log (d+1))$.
\end{proposition}

\begin{proof}
Let  $P(X,Y):=p_{\deg_Y(P)}(X)Y^{\deg_Y(P)}+\ldots+p_0(X),$ with $\deg_Y(P)\le d$. Let $p(X)$ the gcd of all the coefficients $p_i(X)$. We first compute $p(X)$ then write $P(X,Y)= p(X)Q(X,Y)$.
The claimed bounds on the cost for computing $p(X)$ and $Q(X,Y)$ 
 as well as on the magnitude of these polynomials follow immediately from Propositions~\ref{Mignotte} and~\ref{gcd-comp}.
\end{proof}

From now on, we study the zero set of $Q(X,Y)$,  which contains no vertical lines.
We suppose moreover that $\deg_X(Q(X,Y))>0$ because otherwise $V_{\mathbb{R}} (Q)$ is a finite number of horizontal lines and its topology, very easy to describe, is treated as a special case in the first part of sub-section \ref{topology}.
We also describe in sub-section \ref{topology} how to add back vertical lines contained in  $V_{\mathbb{R}} (P)$ to obtain the topology of $V_{\mathbb{R}} (P)$ from the topology of $V_{\mathbb{R}} (Q)$.

A point $(\alpha, \beta) \in V_{\mathbb{R}} (Q)$ 
 is called:
\begin{itemize}
 \item an \emph{$X$-critical} point if $\partial_Y  Q(\alpha, \beta) = 0$,
 \item a \emph{$Y$-critical} point if $\partial_X Q (\alpha, \beta) = 0$,
 \item a \emph{singular} point if $\partial_X Q (\alpha, \beta) = \partial_Y Q(\alpha, \beta) = 0$,
 \item a \emph{regular} point if $\partial_Y Q (\alpha, \beta)\neq 0$ and
 $\partial_X Q (\alpha, \beta) \neq 0$.
 \end{itemize}
 Notice that a singular point of $V_{\mathbb{R}} (Q)$ is also an $X$-critical and a $Y$-critical point.

We define
 \begin{eqnarray}
 D_X(X)& := 
 & \res_Y(Q,\partial Y Q)=q_{d'}(X)\Disc_Y (Q) (X)
 , \label{eqdisc}
 \end{eqnarray}
 where $q_{d'}(X)$ is the leading coefficient of $Q$ with respect to $Y$. 
 
  In geometric terms, the zeroes of $D_X$ are the projections on the $X$-axis of the 
 $X$-critical points
 or the values $\alpha$ such that $\deg(Q(\alpha,Y))<\deg_Y(Q)=d'$.
 
  The special case where $D_X(X)$ has no real root is considered separately in the first part of sub-section \ref{topology}.

Let
 \begin{equation}
\alpha_1<\ldots<\alpha_{N}
\end{equation}
be  the real roots of $
D_X(X)$.

  For every $i=1,\ldots,N$, we denote by  $m_i$ the number of roots of $Q(\alpha_i,Y)$ and by 
  $\beta_{i,1},\ldots,\beta_{i,m_i}$, with
\begin{equation}
\beta_{i,1}<\ldots<\beta_{i,m_i},
\end{equation}
 the real roots of $Q(\alpha_i,Y)$.
 
 We denote by
  $m'_0$ (resp.  $m'_i$,$i=1,\ldots,N$ ) the number of roots of 
 $Q(x,Y)$ when $x\in(-\infty,\alpha_1)$ (resp $x\in(\alpha_i,\alpha_{i+1})$, $x\in(x_N,+\infty)$).
 It is clear that the numbers  $m'_i$,$i=1,\ldots,N-1$ coincide with the number of roots of $Q(\alpha'_i,Y)$ where $\alpha'_i$ is the smallest root of $D'_X$ bigger than $\alpha_i$.

 In order to evaluate $m'_0$ and $m'_N$, we use the following classical Cauchy bound.
 Let $f(X)=a_nX^n+\cdots + a_qX^q$, be a univariate polynomial of degree $n$  with coefficients in $\mathbb{C}$, such that $a_q \neq 0$, $a_{q-1}=\ldots,a_0=0$.
Denoting
\begin{equation}
\label{cauchybounds}
\cau(f):=\sum_{q \leq i\leq n} \left| \frac{a_i}{a_n} \right|,
\end{equation}
the absolute value of any 
root of $f$ in $\mathbb{C}$ is 
smaller than $\cau(f)$ (cf. for example \cite[Lemma 10.2]{BPRbook2}). The numbers  $m'_0$ and  $m'_N$ coincide with the number of roots of $Q(\cau(D_X),Y)$ and $Q(-\cau(D_X),Y)$.

   For each fixed $i$, we define the set of \emph{indices of $X$-critical points above $i$} as
\[
\textsc{CritInd}_i:=\{j\in\{1,\ldots,m_i\}:(\alpha_i,\beta_{i,j})\in\Crit_X(V_{\mathbb{R}}(Q))\}.
\]
with
$\Crit_X(V_{\mathbb{R}}(Q)) := \{(x, y) \in \mathbb{R}^2 \mid Q(x, y) =\partial_Y Q(x,y)= 0\}$ 
and
the set of \emph{indices of singular  points above $i$} as
\[
\textsc{SingInd}_i:=\{j\in\{1,\ldots,m_i\}:(\alpha_i,\beta_{i,j})\in\Sing(V_{\mathbb{R}}(Q))\}.
\]
with
$\Sing(V_{\mathbb{R}}(Q)) := \{(x, y) \in \mathbb{R}^2 \mid Q(x, y) =\partial_Y Q(x,y)=\partial_X Q(x,y)= 0\}.$

Our aim is to determine the number  $\textsc{Left}_{i,j}$ of the 
curve segments
ending at 
$(\alpha_i,\beta_{i,j})$ to the left of $\alpha_i$ as well as the number
 $\textsc{Right}_{i,j}$ of the 
curve segments
 ending at 
$(\alpha_i,\beta_{i,j})$ to the right of $\alpha_i$.

\begin{figure}
\centering
 \includegraphics[width=9cm,height=7cm]{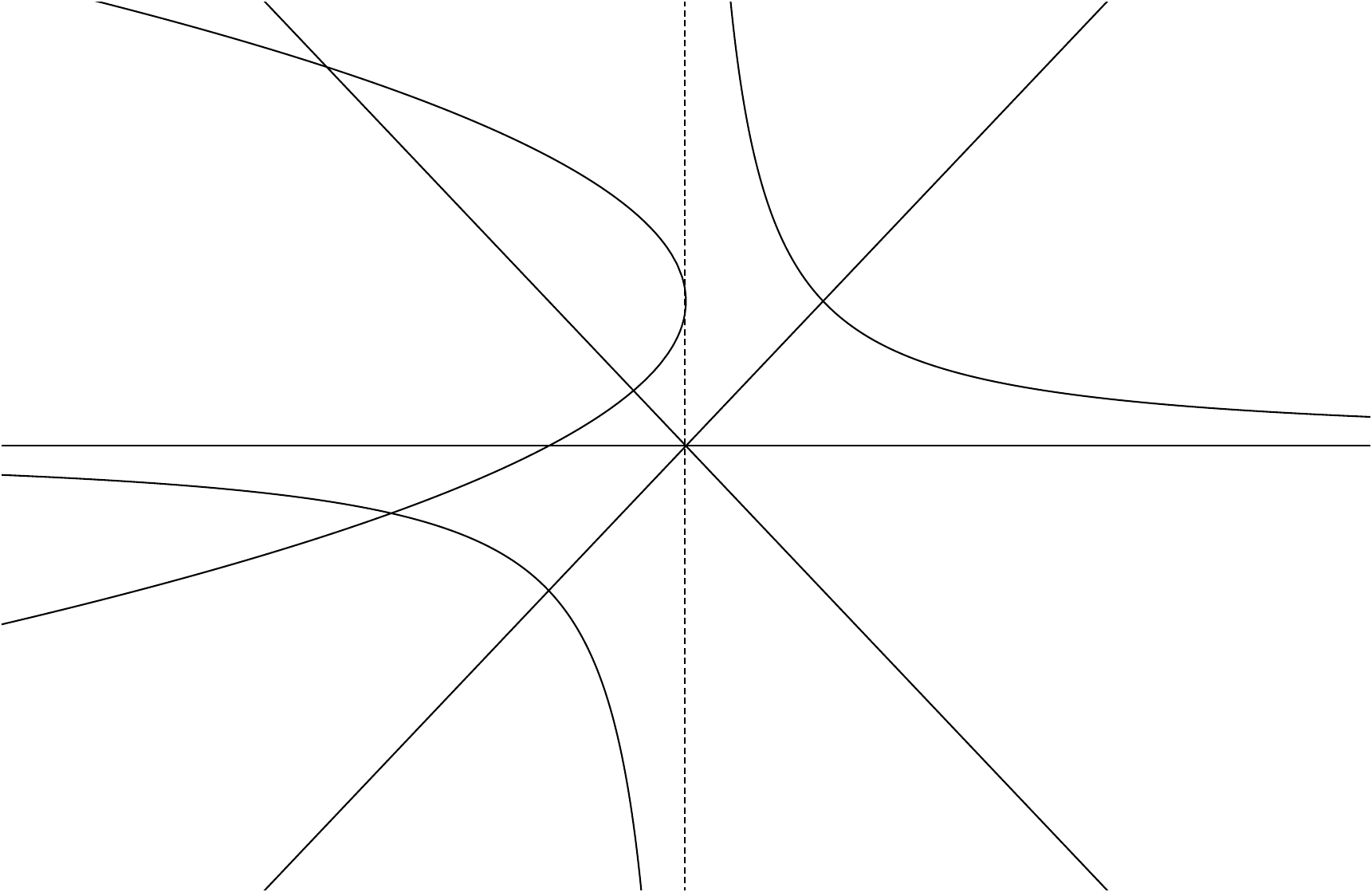} 
\caption{\label{various cases} Simple asymptotes, $X$-critical points and singular points}
\end{figure}

These numbers   are easy to determine if $j\notin \textsc{SingInd}_i$.
If $(\alpha_i,\beta_{i,j})$ is regular, $\textsc{Left}_{i,j}=\textsc{Right}_{i,j}=1$.
If $(\alpha_i,\beta_{i,j})$ is $X$-critical but not singular, denoting by $\mu_{i,j}$  the multiplicity of $\beta_{i,j}$ as a root of $Q(\alpha_i,Y)$ 
\begin{itemize}
\item if $\mu_{i,j}$  is odd, 
 \begin{equation}
  \label{equa:odd}\textsc{Left}_{i,j}=\textsc{Right}_{i,j}=1 
 \end{equation}
 \item if $\mu_{i,j}$  is even and $\partial_Y Q^{[\mu_{i,j}]}(\alpha_i,\beta_{i,j})>0$,  
\begin{equation}
\label{equa:crit+}\textsc{Left}_{i,j}=0,\textsc{Right}_{i,j}=2 
\end{equation}
\item if $\mu_{i,j}$  is even and $\partial_Y  Q^{[\mu_{i,j}]}(\alpha_i,\beta_{i,j})<0$, 
 \begin{equation}
 \label{equa:crit-}\textsc{Left}_{i,j}=2,\textsc{Right}_{i,j}=0 
 \end{equation}
\end{itemize}

We now explain how to deal with the vertical asymptotes in the simple case.  We denote by $d'=\deg_Y(Q)\le d$ and by $q_{0}(X),\ldots, q_{d'}(X)$ the coefficients of $Q$ considered as a polynomials in $Y$. The vertical asymptotes occur at values of $\alpha$ where  $\deg(Q(\alpha,Y))<\deg_Y(Q)$, i.e. at zeroes of $q_{d'}$. Our aim is to count the number of asymptotes $\textsc{Left}_{i,0}$ and  $\textsc{Right}_{i,0}$ tending to $-\infty$ just before $\alpha_i$ and  just after $\alpha_i$ (resp.  $\textsc{Left}_{i,m_i+1}$  and $\textsc{Right}_{i,m_i+1}$ tending to $+\infty$ just before $\alpha_i$ and  just after $\alpha_i$).
If $\deg(Q(\alpha,Y))=d'$ there are no asymptotes and
$$ \textsc{Left}_{i,0}=\textsc{Right}_{i,0}=\textsc{Left}_{i,m_i+1}=\textsc{Right}_{i,m_i+1}=0.$$
The asymptote is \emph{simple} when $\deg(Q(\alpha,Y))=d'-1$, i.e. when $q_{d'}(\alpha)=0$,  $q_{d'-1}(\alpha)\not=0$, and we have
\begin{itemize}
\item if $q_{d'-1}(\alpha)<0$, 
 \begin{equation}   \label{equa:asympt-}
 \textsc{Left}_{i,0}=1,\textsc{Right}_{i,0}=0,\textsc{Left}_{i,m_i+1}=0,\textsc{Right}_{i,m_i+1}=1;
 \end{equation}
\item if $q_{d'-1}(\alpha)>0$
 \begin{equation} \label{equa:asympt+}
 \textsc{Left}_{i,0}=0,\textsc{Right}_{i,0}=1,\textsc{Left}_{i,m_i+1}=1,\textsc{Right}_{i,m_i+1}=0.
  \end{equation}
\end{itemize}

Using the notation above, we can prove the following Theorem.
\begin{theorem}[
Cylindrical Algebraic Decomposition]\label{sing-fibers0}
 Using $\tO (d^5 \tau + d^6)$ bit operations, we can compute:
 \begin{itemize}
 \item the number $N$,
 \item for each $i=1,\ldots, N$, the numbers $m_i$,
 \item for each $i=0,\ldots, N$, the numbers $m'_i$,
 \item for each $i=1,\ldots, N$, the set $\textsc{CritInd}_i$,
   \item for each $i=1,\ldots, N$, the set $\textsc{SingInd}_i$,
 \item for each $j\in \textsc{CritInd}_i\setminus \textsc{SingInd}_i$,  the  multiplicity $\mu_{i,j}$ of $\beta_{i,j}$ as a root of $Q(\alpha_i,Y)$
  and the sign $\sign(\partial_Y^{\mu_{i,j}} Q(\alpha_i,\beta_{i,j}))$,
  \item for each $i=1,\ldots, N$, the degree $\deg(Q(\alpha_i,Y))$, 
  \item for each $i=1,\ldots, N$ such that $\deg(Q(\alpha_i,Y))=d'-1$, $\sign(q_{d'-1}(\alpha_i))$.
  \end{itemize}
\end{theorem}
\begin{proof}
First note, using Lemma \ref{magni}, that $D_X$ is a polynomial of magnitude
bounded by
 $(N,\Lambda)\in (O(d^2),\tO(d\tau+d^2))$.  Hence, by Proposition \ref{sagraloff-isolation-integer} and Proposition \ref{comparingroots}, 
we can compute well-isolating intervals for the real roots of $D_X$ in a number of bit operations bounded by $\tO(d^5\tau+d^6)$.
 According to 
Proposition~\ref{prop:computinggcddeg} (with $R:=D_X$ and $F:=Q(X,Y)$), we may further compute $\deg(Q(\alpha_i,Y))$ and the number of distinct roots of $Q(\alpha_i,Y)$ for all $i$ from $1$ to $N$  using $\tO(d^5\tau+d^6)$ bit operations. From Proposition~\ref{thm:costisolation}, we conclude that  using also $\tO(d^5\tau+d^6)$ bit operations we can further compute 
well-isolating intervals for all real roots of the 
polynomials $Q(\alpha_i,Y)$ as well as the corresponding multiplicities $\mu_{i,j}$ of $\beta_{i,j}$ for all $i$ from $1$ to $N$. Each root $\beta_{i,j}$ of multiplicity larger than one then 
corresponds to an $X$-critical point $(\alpha_i,\beta_{i,j})$, which defines $\textsc{CritInd}_i$ for $i=1,\ldots,N$.
In order to define $\textsc{SingInd}_i$ for $i=1,\ldots,N$ we compute the sign of $\partial_X Q(\alpha_i,\beta_{i,j})$ for all $j\in \textsc{CritInd}_i$ using Proposition \ref{thm:comparinginfibers}.
We compute for each $j\in \textsc{CritInd}_i\setminus \textsc{SingInd}_i$  the sign $\sign(\partial_Y^{\mu_{i,j}} Q(\alpha_i,\beta_{i,j}))$ using again Proposition \ref{thm:comparinginfibers}.
We finally compute for each $i=1,\ldots, N$ such that $\deg(Q(\alpha_i,Y))=d'-1$, $\sign(q_{d'-1}(\alpha_i))$ using Proposition  \ref{comparingroots}.
 \end{proof}

As a consequence if it is the case that
  \begin{itemize}
   \item for each $i=1,\ldots, N$, the set  $\textsc{SingInd}_i$ has $0$ or $1$ element
  \item for each $i=1,\ldots, N$, $\deg(Q(\alpha_i,Y))=d'$  or $\deg(Q(\alpha_i,Y))=d'-1$
  \end{itemize}
  we have all the information needed to compute $\textsc{Left}_{i,j}$ and $\textsc{Right}_{i,j}$ for $j\notin \textsc{SingInd}_i$ as well as $\textsc{Left}_{i,0}$ and $\textsc{Right}_{i,m_i+1}$ 
 using  Equations ( \ref{equa:odd}), (\ref{equa:crit+}),(\ref{equa:crit-}),  (\ref{equa:asympt-}),(\ref{equa:asympt+}).
 Finally, for each $i=1,\ldots, N$, $j\in \textsc{SingInd}_i$
\begin{itemize}
\item $\textsc{Left}_{i,j}=m'_{i-1}-\sum_{k=0,\ldots,m_i+1,k\not=j} \textsc{Left}_{i,k}$,
\item $\textsc{Right}_{i,j}=m'_i-\sum_{k=0,\ldots,m_i+1,k\not=j} \textsc{Right}_{i,k}.$
\end{itemize}

The more complicated remaining cases when there are several singular points above one $\alpha_i$ or several vertical asymptotes at $\alpha_i$ are discussed in the following sections.

\subsection{Adjacency boxes and number of left and right curve segments at a singular point}\label{connectingalgo}

We now deal with the determination of the number  $\textsc{Left}$  of the  curve segments
ending at  $(\alpha,\beta)$ to the left of $\alpha$ as well as the number $\textsc{Right}$ of the 
curve segments ending at $(\alpha,\beta)$ to the right of $\alpha$ at a singular point $(\alpha,\beta)$  of $V_{\mathbb{R}} (Q)$.
 
\begin{definition}\label{def:adjac}
A rectangle
$\bbox(\alpha,\beta):=[\alpha^-,\alpha^+]\times [\gamma^-,\gamma^+]$
is an adjacency box associated to a 
singular 
point $(\alpha,\beta)$  of $V_{\mathbb{R}} (Q)$
if the following holds
\begin{itemize}
\item $\alpha^-< \alpha < \alpha^+$, $\gamma^-< \beta < \gamma^+$,
\item there is no $X$-critical point of $V_{\mathbb{R}} (Q)$ inside $([\alpha^-,\alpha)\cup (\alpha,\alpha^+])\times \mathbb{R}$,
\item except points of the horizontal line $Y=\beta$ when it is included  $V_{\mathbb{R}} (Q)$, there is  no $Y$-critical point of $V_{\mathbb{R}} (Q)$ inside $([\alpha^-,\alpha)\cup (\alpha,\alpha^+])\times \mathbb{R}$,
\item there is no $Y$-critical point or $X$-critical point  of $V_{\mathbb{R}} (Q)$ inside $\mathbb{R} \times([\gamma^-,\beta)\cup (\beta,\gamma^+])$.
\end{itemize}
It follows clearly from the definition that $(\alpha,\beta)$ is the only singular point of 
$\bbox(\alpha,\beta)$.
\end{definition}

The aim of this subsection is to explain how counting the intersection points of $V_{\mathbb{R}} (Q)$ with some specific parts of the boundary of $\bbox(\alpha,\beta)$, 
and taking into account some sign conditions on the slope of the curve, 
 makes it possible to compute the number  $\textsc{Left}$ and
 $\textsc{Right}$.

We denote by 
\begin{itemize}
\item $\LBox(\alpha,\beta):=
[\alpha^-,\alpha] \times [\gamma^-,  \gamma^+]\setminus \{(\alpha,\beta)\}$  the left-half  of $\bbox(\alpha,\beta)$
\item $\RBox(\alpha,\beta):=
[\alpha,\alpha^+] \times [\gamma^-,  \gamma^+]\setminus \{(\alpha,\beta)\}$
 the right-half of $\bbox(\alpha,\beta)$.
 \end{itemize}

 We denote by $\mathrm{Slope}(X,Y)$ the rational fraction
 $$\mathrm{Slope}(X,Y)=-\frac{\partial_X Q(X,Y)}{\partial_Y Q(X,Y)}.$$ 
 The slope of the
tangent to a level line of $Q$  at a regular point $(x,y)$ is
given by 
 $$\mathrm{Slope}(x,y)=-\frac{\partial_X Q(x,y)}{\partial_Y Q(x,y)}.$$ 
 Note that at any point $(x,y)$ of $\bbox(\alpha,\beta) \setminus \{(\alpha,\beta)\}$ intersected with $V_{\mathbb{R}} (Q)$, $\mathrm{Slope}(x,y)$ is well defined and not $0$, except if $y=\beta$ and $Y=\beta$ is a horizontal line contained in  $V_{\mathbb{R}} (Q)$.

A boundary point $(x, y)$ of $\LBox(\alpha,\beta)$ (resp.  $\RBox(\alpha,\beta)$) in $V_{\mathbb{R}} (Q)$
is exactly of one of the following types

- type 0: there is no curve segment of $V_{\mathbb{R}} (Q)$
containing $(x,y)$ inside $\LBox(\alpha,\beta)$  (resp.  $\RBox(\alpha,\beta)$);  this is the case only at some corners, i.e. if
$(x,y)=(\alpha_-,\gamma_+)$ with $\mathrm{Slope}(\alpha_-,\gamma_+)>0$ or $(x,y)=(\alpha_-,\gamma_-)$  with $\mathrm{Slope}(\alpha_-,\gamma_-)<0$ (resp. $(x,y)=(\alpha_+,\gamma_+)$  with $\mathrm{Slope}(\alpha_+,\gamma_+)<0$ or $(x,y)=(\alpha_+,\gamma_-)$) with $\mathrm{Slope}(\alpha_+,\gamma_-)>0$).

 - type 1: there is a 
 curve segment of $V_{\mathbb{R}} (Q)$ containing $(x,y)$ which
  ends at another boundary point denoted by $\Match(x,y)$ of $\LBox(\alpha,\beta)\cap V_{\mathbb{R}} (Q)$  (resp.  $\RBox(\alpha,\beta)\cap V_{\mathbb{R}} (Q)$);

 - type 2: there is a 
 curve segment of $V_{\mathbb{R}} (Q)$ containing $(x,y)$ which
 ends at $\Match(x,y)=(\alpha, \beta)$.

\begin{remark}\label{propcurvesegment}
The following properties are clear from Definition \ref{def:adjac}.
The matching point $\Match(x,y)$  has the same slope sign as $(x,y)$ for
type 1. If $(x,y)\not= (x',y')$, $\Match(x,y)\not=\Match(x',y')$ except if
$\Match(x,y)=\Match(x',y')=(\alpha,\beta)$.
A curve segment 
associated to a point of type 1 
does not meet any other 
curve segment, 
and 
a curve segment of type 2
 meets only other 
 curve segments 
 of type 2, at
$(\alpha, \beta)$.
\end{remark}

We denote by 
\begin{itemize}
 \item $\LBox^+(\alpha,\beta):=
[\alpha^-,\alpha] \times (\beta, \gamma^+]$  the upper left-half  of $\bbox(\alpha,\beta)$
\item $\RBox^+(\alpha,\beta):=
[\alpha,\alpha^+] \times (\beta,  \gamma^+]$
the upper right-half  of $\bbox(\alpha,\beta)$,
 \item $\LBox^-(\alpha,\beta):=
[\alpha^-,\alpha] \times [\gamma^-,\beta)$  the lower left-half  of $\bbox(\alpha,\beta)$
\item $\RBox^-(\alpha,\beta):=
[\alpha,\alpha^+] \times [\gamma^-,  \beta)$
 the lower right-half of $\bbox(\alpha,\beta)$.
 \end{itemize}

 We introduce some definitions to describe the intersection of 
 $V_{\mathbb{R}} (Q)$ with the boundaries of $\LBox(\alpha,\beta)$, 
 $\RBox(\alpha,\beta)$, $\LBox^+(\alpha,\beta)$, $\RBox^+(\alpha,\beta)$, 
 $\LBox^-(\alpha,\beta)$ and $\LBox^-(\alpha,\beta)$.

\begin{notation}
 Denote by 
 \begin{itemize}
 \item $V_{\alpha^-}=V_{\mathbb{R}} (Q)\cap (\{\alpha^-\} \times (\gamma^-,
 \gamma^+))$  and $V_{\alpha^+}=V_{\mathbb{R}} (Q)\cap (\{\alpha^+\} \times (\gamma^-, \gamma^+))$, ordered by increasing value of $y$,
 \item  $V_{\alpha^-}^{=\beta}=V_{\mathbb{R}} (Q)\cap \{(\alpha^-,\beta)\}$, $V_{\alpha^-}^{<\beta}= V_{\mathbb{R}} (Q)\cap ( \{\alpha^-\} \times (\gamma^-,
 \beta))$,  $V_{\alpha^-}^{>\beta}= V_{\mathbb{R}} (Q)\cap (\{\alpha^-\} \times (\beta,\gamma^+))$, ordered by increasing value of $y$,
  \item  $V_{\alpha^+}^{=\beta}=V_{\mathbb{R}} (Q)\cap  \{(\alpha^+,\beta)\}$, $V_{\alpha^+}^{<\beta}=V_{\mathbb{R}} (Q)\cap (\{\alpha^+\} \times (\gamma^-,
 \beta))$,  $V_{\alpha^+}^{>\beta}= V_{\mathbb{R}} (Q)\cap (\{\alpha^+\} \times (\beta,\gamma^+))$, ordered by increasing value of $y$,
\item $V_{\alpha}^{<\beta}=V_{\mathbb{R}} (Q)\cap (\{\alpha\} \times (\gamma^-,
 \beta))$, $V_{\alpha}^{>\beta}=V_{\mathbb{R}} (Q)\cap (\{\alpha\} \times (\gamma^-,
 \beta))$, ordered by increasing value of $y$,
  \end{itemize}
and 
 \begin{itemize}
 \item  $H_{\gamma^-}^{= \alpha^-}=V_{\mathbb{R}} (Q)\cap \{(\alpha^-,\gamma^-)\}$, $H_{\gamma^-}^{ < \alpha}=V_{\mathbb{R}} (Q)\cap ((\alpha^-,\alpha) \times \{\gamma^-\})$,   $H_{\gamma^-}^{= \alpha}=V_{\mathbb{R}} (Q)\cap \{(\alpha,\gamma^-)\}$, $H_{\gamma^-}^{ >\alpha}=V_{\mathbb{R}} (Q)\cap ((\alpha,\alpha^+) \times \{\gamma^-\})$, $H_{\gamma^-}^{= \alpha^+}=V_{\mathbb{R}} (Q)\cap \{(\alpha^+,\gamma^-)\}$,
 \item $H_{\gamma^+}^{= \alpha^-}=V_{\mathbb{R}} (Q)\cap \{(\alpha^-,\gamma^+)\}$, $H_{\gamma^+}^{ < \alpha}=V_{\mathbb{R}} (Q)\cap ((\alpha^-,\alpha) \times \{\gamma^+\})$,   $H_{\gamma^+}^{= \alpha}=V_{\mathbb{R}} (Q)\cap \{(\alpha,\gamma^+)\}$, $H_{\gamma^+}^{ >\alpha}=V_{\mathbb{R}} (Q)\cap ((\alpha,\alpha^+) \times \{\gamma^+\})$, $H_{\gamma^+}^{= \alpha^+}=V_{\mathbb{R}} (Q)\cap \{(\alpha^+,\gamma^+)\}$.
  \end{itemize}
\end{notation}

In summary:
 \begin{itemize}
 \item the boundary points of $\LBox^+(\alpha,\beta)$ in  $V_{\mathbb{R}} (Q)$
are $V_{\alpha^-}^{>\beta} \cup H_{\gamma^+}^{=\alpha^-}\cup H_{\gamma^+}^{<\alpha} \cup H_{\gamma^+}^{=\alpha} \cup V_{\alpha}^{>\beta}$,
\item  the boundary points of $\RBox^+(\alpha,\beta)$ in  $V_{\mathbb{R}} (Q)$
are   $V_{\alpha^+}^{>\beta} \cup H_{\gamma^+}^{=\alpha^+} \cup H_{\gamma^+}^{>\alpha} \cup H_{\gamma^+}^{=\alpha} \cup V_{\alpha}^{>\beta}$,
 \item the boundary points of $\LBox^-(\alpha,\beta)$ in  $V_{\mathbb{R}} (Q)$
are  $V_{\alpha^-}^{<\beta} \cup H_{\gamma^-}^{=\alpha^-} \cup H_{\gamma^-}^{<\alpha}+H_{\gamma^-}^{=\alpha} \cup V_{\alpha}^{<\beta}$,
\item the boundary points of $\RBox^-(\alpha,\beta)$ in  $V_{\mathbb{R}} (Q)$
are  $V_{\alpha^+}^{<\beta} \cup H_{\gamma^+}^{=\alpha^+}\cup H_{\gamma^+}^{>\alpha} \cup H_{\gamma^-}^{=\alpha}\cup V_{\alpha}^{<\beta}$.
 \end{itemize}

\begin{figure}
\centering
 \includegraphics[width=12cm,height=9cm]{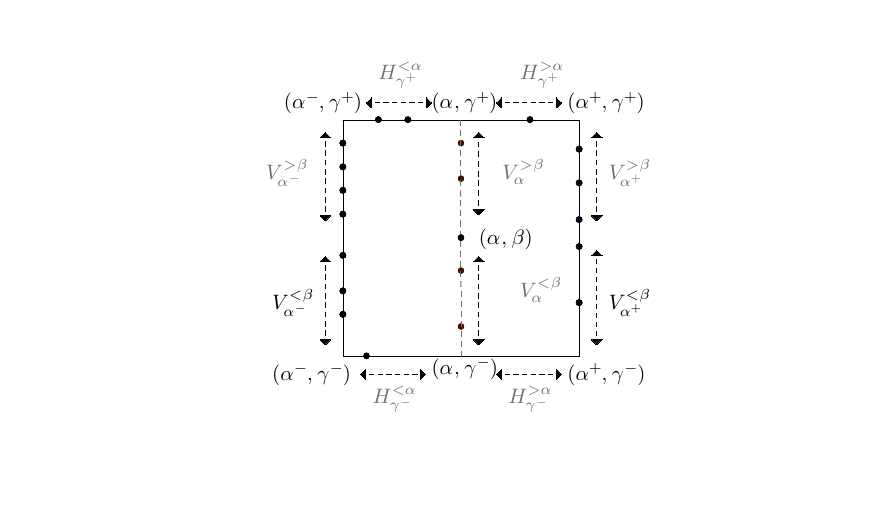} 
\caption{\label{description_lists}Illustration of the notations}
\end{figure}

\begin{exemple}
In the case of Figure \ref{description_lists},  if  the signs of the slopes are all positive on the 
 boundary points,
the matching process is intuitive and illustrated by Figure \ref{matching}.
\begin{figure}
\centering
\includegraphics[width=9cm,height=7cm]{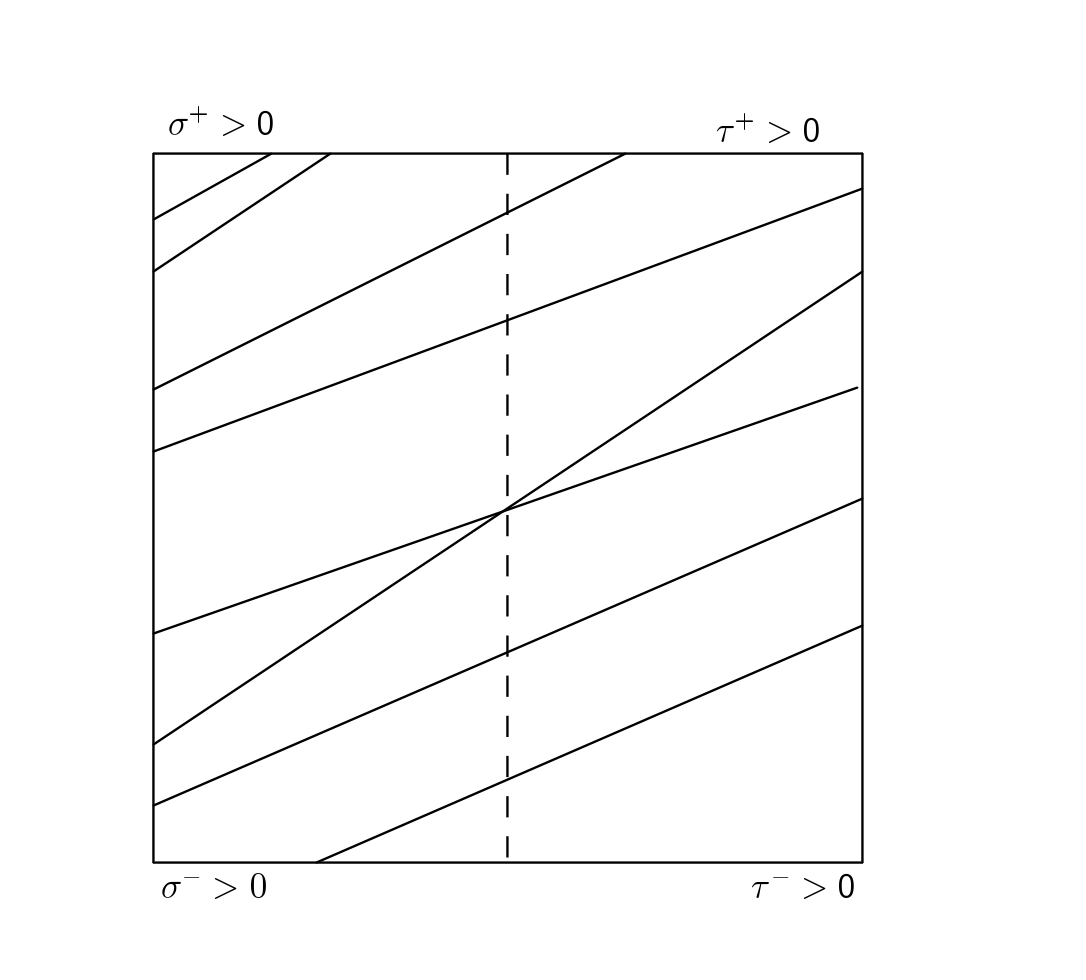}
\caption{
\label{matching}
Example of matching}
\end{figure}
   \end{exemple}

The aim of the next pages is to transform the intuition used in the example into explicit definitions and an algorithm covering all cases.

\begin{proposition}
 \label{forcorrectness}
It holds
 \begin{itemize}
 \item [a)]  
 All the boundary points of  $\LBox^+(\alpha,\beta)$ in  $V_{\mathbb{R}} (Q)$  have the same slope sign.
 \item [b)]
All the boundary points of  $\RBox^+(\alpha,\beta)$ in  $V_{\mathbb{R}} (Q)$  have the same slope sign.
  \item [c)] 
 All the boundary points of  $\LBox^-(\alpha,\beta)$ in  $V_{\mathbb{R}} (Q)$ have the same slope sign.
 \item [d)] 
 All the boundary points of  $\RBox^-(\alpha,\beta)$ in  $V_{\mathbb{R}} (Q)$  have the same slope sign.
  \end{itemize}
\end{proposition}
 
\begin{proof} 
We prove only a), the proofs for b), c) and d) being similar.

Suppose that $H_{\gamma^+}^{=\alpha^-}+H_{\gamma^+}^{<\alpha}+H_{\gamma^+}^{=\alpha}$ has at least two elements, and that $\left( x_1, \gamma^+ \right)$ and $\left( x_2, \gamma^+ \right)$ $x_1<x_2$ are two 
 points of $H_{\gamma^+}^{=\alpha^-}+H_{\gamma^+}^{<\alpha}+H_{\gamma^+}^{=\alpha}$ with different slope signs and denote by $C_1$ and $C_2$ the connected components of $V_{\mathbb{R}}(Q)$ inside $[\alpha^-,\alpha)\times \R$ such that $(x_1,\gamma^+)\in \bar C_1$ and 
 $(x_2,\gamma^+)\in \bar C_2$.
 Since
 $\bbox(\alpha,\beta)$ is an adjacency box (cf Definition \ref{def:adjac}), $C_1$ and $C_2$ are the graphs of two monotonous semi-algebraic continuous functions $\varphi_1$ and $\varphi_2$ defined on $[\alpha^-,\alpha)$. At $\alpha$, $\varphi_1$ and $\varphi_2$ have limits of opposite signs, so that the sign of the limit of $\varphi_1-\varphi_2$ is well defined at $\alpha$.
 The signs of $\varphi_1(x_1)-\varphi_2(x_1)$ at $x_1$ and the sign of the limit of $\varphi_1-\varphi_2$ at $x_2$ are opposite, so $\varphi_1(x)=\varphi_2(x)$ for a value $x\in (x_1,\alpha)$, which is impossible because such a point would be a singular point of $V_{\mathbb{R}}(Q)$  inside $[\alpha^-,\alpha)\times \R$.
 
 Suppose that $V_{\alpha}^{>\beta}$ (resp. $V_{\alpha^-}^{>\beta}$) has at least two elements, and that $\left(\alpha, y_1\right)$ and $\left(\alpha,y_2\right)$ 
 (resp.   $\left(\alpha^-, y_1\right)$ and $\left(\alpha^-,y_2\right)$ ), $y_1<y_2$, are two
 points of $V_{\alpha}^{>\beta}$ (resp. $V_{\alpha}^{>\beta}$)  with different slope signs and denote by $C'_1$ and $C'_2$ the connected components of $V_{\mathbb{R}}(Q)$ inside $\R\times (\beta,\gamma^+]$ containing them.
 Since 
  $\bbox(\alpha,\beta)$ is an adjacency box (cf Definition \ref{def:adjac}),
   $C'_1$ and $C'_2$ are the graphs of two monotonous semi-algebraic continuous functions $\psi_1$ and $\psi_2$ defined on 
 $(\beta,\gamma^+]$.
 The signs of $\psi_1-\psi_2$ at $y_1$ and $y_2$ are opposite, so $\psi_1(y)=\psi_2(y)$ for a value $y\in (y_1,y_2)$, which is impossible because such a point would be a singular point of $V_{\mathbb{R}}(Q)$  inside $\R\times (\beta,\gamma^+]$.

Suppose  that $H_{\gamma^+}^{=\alpha^-}+H_{\gamma^+}^{<\alpha}+H_{\gamma^+}^{=\alpha}\not=\emptyset$ and $V_{\alpha}^{>\beta}\not=\emptyset$, and let  $(\alpha,y)$ be the element of $\bar V_{\alpha}^{>\beta}$ with highest value of $y$.

(i) Suppose  that the slope sign of $(\alpha,y)$ is negative 
and the slope sign of the elements of $H_{\gamma^+}^{=\alpha^-}+H_{\gamma^+}^{<\alpha}+H_{\gamma^+}^{=\alpha}$ is positive. Let $(x,\gamma^+)$ be the element of $H_{\gamma^+}^{<\alpha}+H_{\gamma^+}^{=\alpha}$ with highest value of $x$. Denote by $C'_1$ and $C'_2$ the connected components of $V_{\mathbb{R}}(Q)$ inside $\R\times (\beta,\gamma^+]$ such that $(\alpha,y)\in \bar C_1$ and $(x,\gamma^+)\in \bar C_2$.  
Since  
 $\bbox(\alpha,\beta)$ is an adjacency box (cf Definition \ref{def:adjac}), $C'_1$ (resp. $C'_2$) is the graph of an increasing (resp. decreasing) semi-algebraic continuous functions $\psi_1$ (resp. $\psi_2$) defined on 
 $(\beta,\gamma^+]$.  The matching point of $(\alpha,y)$ is not $(x,\gamma^+)$ because these two points have opposite slope signs so that the limit of $\varphi_1$ at $\gamma^+$ is strictly less than  $x$.
 The signs of  the limit $\psi_1-\psi_2$ at $\beta$  (resp.  $\gamma^+$) is positive  (resp. negative) and at $y_2$  is negative  (resp. positive) , so $\psi_1(y)=\psi_2(y)$ for a value $y\in (\beta,\gamma^+)$, which is impossible because such a point would be a singular point of $V_{\mathbb{R}}(Q)$  inside $\R\times (\beta,\gamma^+)$.

(ii) Suppose  now that the slope sign  of
 $(\alpha,y)$ is positive and the slope sign of the elements of $H_{\gamma^+}^{=\alpha^-}+H_{\gamma^+}^{<\alpha}+H_{\gamma^+}^{=\alpha}$ is negative.
 
If $Q(\alpha,\gamma^+)=0$, this is impossible by (i), using the symmetry with respect to the line $X=\alpha$.

Otherwise there is an element  $(x,\gamma^+)$ in $H_{\gamma^+}^{=\alpha^-}+H_{\gamma^+}^{<\alpha}$. Denote by $C_1$ (resp. $C_2$) the connected components of $V_{\mathbb{R}}(Q)$ inside $[\alpha^-,\alpha)\times \R$ such that $(x,\gamma^+)\in\bar C_1$ (resp.
 $(\alpha,y)\in \bar C_2$). 
 Since
  $\bbox(\alpha,\beta)$ is an adjacency box (cf Definition \ref{def:adjac}),
   $C_1$ (resp. $C_2$) is the graph of an increasing (resp. decreasing) semi-algebraic continuous functions $\varphi_1$ (resp. $\varphi_2$) defined on $[\alpha^-,\alpha)$. The matching point of $(x,\gamma^+)$ is not $(\alpha,y)$ because these two points have opposite slope signs so that the limit of $\varphi_2$ at $\alpha$ is strictly less than  $y$. So the limit of $\varphi_1-\varphi_2$ at $\alpha^-$ (resp. $\alpha$) is negative (resp. positive). It follows that $\varphi_1(x)=\varphi_2(x)$ for a value $x\in (\alpha^-,\alpha)$, which is impossible because such a point would be a singular point of $V_{\mathbb{R}}(Q)$  inside $[\alpha^-,\alpha)\times \R$.
  
   Suppose finally that $H_{\gamma^+}^{=\alpha^-}+H_{\gamma^+}^{<\alpha}+H_{\gamma^+}^{=\alpha}\not=\emptyset$ and  $V_{\alpha^-}^{>\beta}\not=\emptyset$, and let  $(\alpha,y)$ be $V_{\alpha^-}^{>\beta}\not=\emptyset$ with highest value of $y$.

(i) Suppose  that the slope sign of $(\alpha,y)$ is negative 
and the slope sign of the elements of $H_{\gamma^+}^{=\alpha^-}+H_{\gamma^+}^{<\alpha}+H_{\gamma^+}^{=\alpha}$ is positive. Let $(x,\gamma^+)$ be the element of $H_{\gamma^+}^{<\alpha}+H_{\gamma^+}^{=\alpha}$ with smallest value of $x$. Denote by $C'_1$ and $C'_2$ the connected components of $V_{\mathbb{R}}(Q)$ inside $\R\times (\beta,\gamma^+]$ such that $(\alpha,y)\in \bar C_1$ and $(x,\gamma^+)\in \bar C_2$.  
Since  
 $\bbox(\alpha,\beta)$ is an adjacency box (cf Definition \ref{def:adjac}), $C'_1$ (resp.) $C'_2$ is the graph of an increasing (resp. decreasing) semi-algebraic continuous functions $\psi_1$ (resp. $\psi_2$) defined on 
 $(\beta,\gamma^+]$ .  The matching point of $(\alpha,y)$ is not $(x,\gamma^+)$ because these two points have opposite slope signs so that the limit of $\varphi_1$ at $\gamma^+$ is strictly less than  $x$.
 The signs of  the limit $\psi_1-\psi_2$ at $\beta$  (resp.  $\gamma^+$) is positive  (resp. negative) and at $y_2$  is negative  (resp. positive) , so $\psi_1(y)=\psi_2(y)$ for a value $y\in (\beta,\gamma^+)$, which is impossible because such a point would be a singular point of $V_{\mathbb{R}}(Q)$  inside $\R\times (\beta,\gamma^+)$.

(ii) Suppose  now that the slope sign  of
 $(\alpha,y)$ is positive and the slope sign of the elements of $H_{\gamma^+}^{=\alpha^-}+H_{\gamma^+}^{<\alpha}+H_{\gamma^+}^{=\alpha}$ is negative.
 
If $Q(\alpha^-,\gamma^+)=0$ , this is impossible by (i), using the symmetry with respect to the line $X=\alpha$.

Otherwise there is an element  $(x,\gamma^+)$ in $H_{\gamma^+}^{=\alpha^-}+H_{\gamma^+}^{<\alpha}$. Denote by $C_1$ (resp. $C_2$) the connected components of $V_{\mathbb{R}}(Q)$ inside $[\alpha^-,\alpha)\times \R$ such that $(x,\gamma^+)\in\bar C_1$ (resp.
 $(\alpha,y)\in \bar C_2$). 
 Since
  $\bbox(\alpha,\beta)$ is an adjacency box (cf Definition \ref{def:adjac}),
   $C_1$ (resp. $C_2$) is the graph of an increasing (resp. decreasing) semi-algebraic continuous functions $\varphi_1$ (resp. $\varphi_2$) defined on $[\alpha^-,\alpha)$. The matching point of $(x,\gamma^+)$ is not $(\alpha,y)$ because these two points have opposite slope signs so that the limit of $\varphi_2$ at $\alpha$ is strictly less than  $y$. So the limit of $\varphi_1-\varphi_2$ at $\alpha^-$ (resp. $\alpha$) is negative (resp. positive). It follows that $\varphi_1(x)=\varphi_2(x)$ for a value $x\in (\alpha^-,\alpha)$, which is impossible because such a point would be a singular point of $V_{\mathbb{R}}(Q)$  inside $[\alpha^-,\alpha)\times \R$.
\end{proof}

\begin{notation}
We denote by $\sigma^+$ (resp. $\sigma^-$) the slope signs of the boundary points of 
$\LBox^+(\alpha,\beta)$ (resp. $\LBox^-(\alpha,\beta)$) in  $V_{\mathbb{R}} (Q)$ 
and by 
$\tau^+$ (resp. $\tau^-$) 
of the boundary points of 
$\RBox^+(\alpha,\beta)$ (resp. $\RBox^-(\alpha,\beta)$) in  $V_{\mathbb{R}} (Q)$ 

By convention when
there are no boundary points of 
$\LBox^+(\alpha,\beta)$ (resp. $\RBox^-(\alpha,\beta)$)  in  $V_{\mathbb{R}} (Q)$ 
 we define $\sigma^+$ (resp. $\tau^-$) to be  $>0$ 
 and  when
 there are no boundary points of 
$\LBox^-(\alpha,\beta)$, (resp. $\RBox^+(\alpha,\beta)$)) in  $V_{\mathbb{R}} (Q)$ 
 we define $\sigma^-$, (resp. $\tau^+$) to be  $<0$.
\end{notation}

\begin{algorithm}[Number of 
curve segments
 arriving at a singular point]
\label{algoconnect} 
\noindent {\bf 1. Number of 
curve  segments
 arriving to the left} \\
{\bf Input:}  $\#V_{\alpha^-}$, $\#V_{\alpha^-}^{=\beta}$,$\#V_{\alpha^-}^{>\beta}$, $\#V_{\alpha^-}^{<\beta}$,$\#H_{\gamma^-}^{= \alpha^-}$, $\#H_{\gamma^-}^{ < \alpha}$, $\#H_{\gamma^-}^{= \alpha}$, $\#H_{\gamma^+}^{= \alpha^-}$, $\#H_{\gamma^+}^{ < \alpha}$, $\#H_{\gamma^+}^{= \alpha}$, 
 $\#V_{\alpha}^{>\beta}$, $\#V_{\alpha}^{<\beta}$,  $\sigma^-$ and $\sigma^+$ \\
{\bf Output:} 
the number $\textsc{Left}$ of the
curve  segments
 ending at 
$(\alpha,\beta)$ to the left of $\alpha$
\begin{itemize}
\item If $\sigma^+>0$ and $\sigma^-<0$, compute
\begin{equation}\label{totheleft1}
\textsc{Left}=\#V_{\alpha^-}^{=\beta}.
\end{equation}
\item If $\sigma^+<0$ and $\sigma^->0$, compute
\begin{equation}\label{totheleft2}
\textsc{Left}=\#V_{\alpha^-}+\#H_{\gamma^+}^{ = \alpha}+\#H_{\gamma^+}^{< \alpha}+\#H_{\gamma^-}^{ = \alpha^-}+\#H_{\gamma^-}^{< \alpha}-(\#V_{\alpha}^{>\beta }+\#V_{\alpha}^{<\beta}).
\end{equation}
\item If $\sigma^+>0$ and $\sigma^->0$ and $Q(\alpha^-,\beta)=0$, compute
\begin{equation}\label{totheleft3}
\textsc{Left}=1+\#V_{\alpha^-}^{<\beta}-(\#H_{\gamma^-}^{ < \alpha}+\#H_{\gamma^-}^{= \alpha}+\#V_{\alpha}^{<\beta}).
\end{equation}
\item If $\sigma^+>0$ and $\sigma^->0$ and $Q(\alpha^-,\beta)\not=0$, compute
\begin{equation}\label{totheleft4}
\textsc{Left}=\#V_{\alpha^-}+\#H_{\gamma^-}^{ = \alpha^-}+\#H_{\gamma^-}^{<\alpha}-(\#H_{\gamma^+}^{ < \alpha}+\#H_{\gamma^+}^{= \alpha}+\#V_\alpha^{>\beta }+\#V_\alpha^{<\beta}).
\end{equation}
\item If $\sigma^+<0$ and $\sigma^-<0$ and $Q(\alpha^-,\beta)=0$, compute
\begin{equation}\label{totheleft5}
\textsc{Left}=1+\#V_{\alpha^-}^{>\beta}-(\#H_{\gamma^+}^{ < \alpha}+\#H_{\gamma^+}^{= \alpha}+\#V_{\alpha}^{>\beta})
\end{equation}
\item If $\sigma^+<0$ and $\sigma^-<0$ and $Q(\alpha^-,\beta)\not=0$, compute
\begin{equation}\label{totheleft6}
\textsc{Left}=\#V_{\alpha^-}+\#H_{\gamma^+}^{= \alpha^- }+\#H_{\gamma^+}^{< \alpha}-(\#H_{\gamma^-}^{ < \alpha}+\#H_{\gamma^-}^{= \alpha}+\#V_{\alpha}^{>\beta }+\#V_{\alpha}^{<\beta})
\end{equation}
\end{itemize}
\noindent {\bf 2.  Number of 
curve segments
 arriving to the right} \\
{\bf Input:} $\#V_{\alpha^+}$, $\#V_{\alpha^+}^{=\beta}$,$\#V_{\alpha^+}^{>\beta}$, $\#V_{\alpha^+}^{<\beta}$, $\#H_{\gamma^-}^{= \alpha^+}$, $\#H_{\gamma^-}^{ > \alpha}$, $\#H_{\gamma^-}^{= \alpha}$, $\#H_{\gamma^+}^{= \alpha^+}$, $\#H_{\gamma^+}^{ > \alpha}$, $\#H_{\gamma^+}^{= \alpha}$,$\#V_{\alpha}^{>\beta}$, $\#V_{\alpha}^{<\beta}$,  
$\tau^-$ and $\tau^+$ 
\\
{\bf Output:}
 the number $\textsc{Right}$ of the 
curve segments 
 ending at 
$(\alpha,\beta)$  to the right of $\alpha$.
\begin{itemize}
\item If $\tau^+<0$ and $\tau^->0$, compute
\begin{equation}\label{totheright1}
\textsc{Right}=\#V_{\alpha^+}^{=\beta}
\end{equation}
\item If $\tau^+>0$ and $\tau^-<0$, compute
\begin{equation}\label{totheright2}
\textsc{Right}=\#V_{\alpha^+}+\#H_{\gamma^+}^{ = \alpha}+\#H_{\gamma^+}^{> \alpha}+\#H_{\gamma^-}^{ = \alpha^+}+\#H_{\gamma^-}^{> \alpha}-(\#V_{\alpha}^{>\beta }+\#V_{\alpha}^{<\beta})
\end{equation}
\item If $\tau^+<0$ and $\tau^-<0$ and $Q(\alpha^+,\beta)=0$, compute
\begin{equation}\label{totheright3}
\textsc{Right}=1+\#V_{\alpha^+}^{<\beta}-(\#H_{\gamma^-}^{ > \alpha}+\#H_{\gamma^-}^{= \alpha}+\#V_{\alpha}^{<\beta})
\end{equation}
\item If $\tau^+<0$ and $\tau^-<0$ and $Q(\alpha^+,\beta)\not=0$, compute
\begin{equation}\label{totheright4}
\textsc{Right}=\#V_{\alpha^+}+\#H_{\gamma^-}^{ = \alpha^+}+\#H_{\gamma^-}^{>\alpha}-(\#H_{\gamma^+}^{ > \alpha}+\#H_{\gamma^+}^{= \alpha}+\#V_\alpha^{>\beta }+\#V_\alpha^{<\beta})
\end{equation}
\item If $\tau^+>0$ and $\tau^->0$ and $Q(\alpha^+,\beta)=0$, compute
\begin{equation}\label{totheright5}
\textsc{Right}=1+\#V_{\alpha^+}^{>\beta}-(\#H_{\gamma^+}^{ > \alpha}+\#H_{\gamma^+}^{= \alpha}+\#V_{\alpha}^{>\beta})
\end{equation}
\item If $\tau^+>0$ and $\tau^->0$ and $Q(\alpha^+,\beta)\not=0$, compute
\begin{equation}\label{totheright6}
\textsc{Right}=\#V_{\alpha^+}+\#H_{\gamma^+}^{= \alpha^+ }+\#H_{\gamma^+}^{>\alpha}-(\#H_{\gamma^-}^{ > \alpha}+\#H_{\gamma^-}^{= \alpha}+\#V_{\alpha}^{>\beta }+\#V_{\alpha}^{<\beta})
\end{equation}
\end{itemize}
\end{algorithm}

\begin{proof}[Proof of correctness of Algorithm \ref{algoconnect}]
We denote by $\textsc{Left}^+$ 
 the number of curve segments arriving at $(\alpha,\beta)$ inside  $\LBox^+(\alpha,\beta)$ .

If $\sigma^+>0$ and ($\sigma^->0$ or $Q(\alpha^-,\beta)=0$), there is no matching point of a boundary point of  $\LBox^+(\alpha,\beta)$ contained in   $\LBox^-(\alpha,\beta)$ and no curve segment starting from  $V_{\alpha^-}^{>\beta}$ is of type 2, so that $$\textsc{Left}^+=0.$$  This is clear by
Remark \ref{propcurvesegment} and by the fact that there are no singular point in $\LBox(\alpha,\beta)$.

If $\sigma^+<0$  and $\sigma^-<0$ and $Q(\alpha^-,\beta)=0$, there is no matching point of a boundary point of  $\LBox^+(\alpha,\beta)$ contained in  $\LBox^-(\alpha,\beta)$ and 
 $$\textsc{Left}^+=\#V_{\alpha^-}^{>\beta}-(\#H_{\gamma^+}^{ < \alpha}+\#H_{\gamma^+}^{= \alpha}+\#V_{\alpha}^{>\beta}).$$  This is clear by the fact that there are no singular point in $\LBox(\alpha,\beta)$.
 
 If $\sigma^+<0$  and $\sigma^-<0$ with $Q(\alpha^-,\beta)\not=0$, there maybe matching points of a boundary points of  $\LBox^+(\alpha,\beta)$ contained in  $\LBox^-(\alpha,\beta)$ and 
 $$\textsc{Left}=\#V_{\alpha^-}+\#H_{\gamma^+}^{= \alpha^- }+\#H_{\gamma^+}^{< \alpha}-(\#H_{\gamma^-}^{ < \alpha}+\#H_{\gamma^-}^{= \alpha})-(\#V_{\alpha}^{>\beta }+\#V_{\alpha}^{<\beta}).$$
  This is clear by
Remark \ref{propcurvesegment} and by the fact that there are no singular point in $\LBox(\alpha,\beta)$.
 
It is easy to prove the statement, using symmetries.
\end{proof}

\begin{exemple}
In the case of Figure \ref{description_lists},  for the left side we have $Q(\alpha^-,\beta)\not=0$,
 $\#V_{\alpha^-}=7$, $\#H_{\gamma^+}^{= \alpha^-}=0$, $\#H_{\gamma^+}^{ < \alpha}=2$, $\#H_{\gamma^+}^{= \alpha}=0$, $\#H_{\gamma^-}^{= \alpha^-}=0$, $\#H_{\gamma^-}^{ < \alpha}=1$, $\#H_{\gamma^-}^{= \alpha}=0$, $\#V_{\alpha}^{>\beta}=2$, $\#V_{\alpha}^{<\beta}=2$. 
  For the right side we have $Q(\alpha^+,\beta)\not=0$,
$\#V_{\alpha^+}=5$, $\#H_{\gamma^+}^{= \alpha^+}=0 $, $\#H_{\gamma^+}^{ > \alpha}=1$, $\#H_{\gamma^+}^{= \alpha}=0$, $\#H_{\gamma^-}^{= \alpha^+}=0$, $\#H_{\gamma^-}^{ > \alpha}=0$, $\#H_{\gamma^-}^{= \alpha}=0$, $\#V_{\alpha}^{>\beta}=2$, $\#V_{\alpha}^{<\beta}=0$. 
 
 Hence, if  $\sigma^+>0$,  $\sigma^->0$,  from formula (\ref{totheleft4}) $\textsc{Left}= 2$. 
If   
$\tau^+>0$, $\tau^->0$. we have 
 from formula (\ref{totheright6})
 $\textsc{Right}= 2$. 
 This is illustrated by 
Figure \ref{matching}.
  
  Always in the case of Figure \ref{description_lists},
if we have $\sigma^+>0$,  $\sigma^-<0$, $\tau^+>0$, $\tau^-<0$
it follows from formula (\ref{totheleft1}) $\textsc{Left}= 0$ and from formula (\ref{totheright2})
 $\textsc{Right}= 2$. This is illustrated by 
 Figure \ref{matching2}.
 
 \begin{figure}
\centering
\includegraphics[width=9cm,height=7cm]{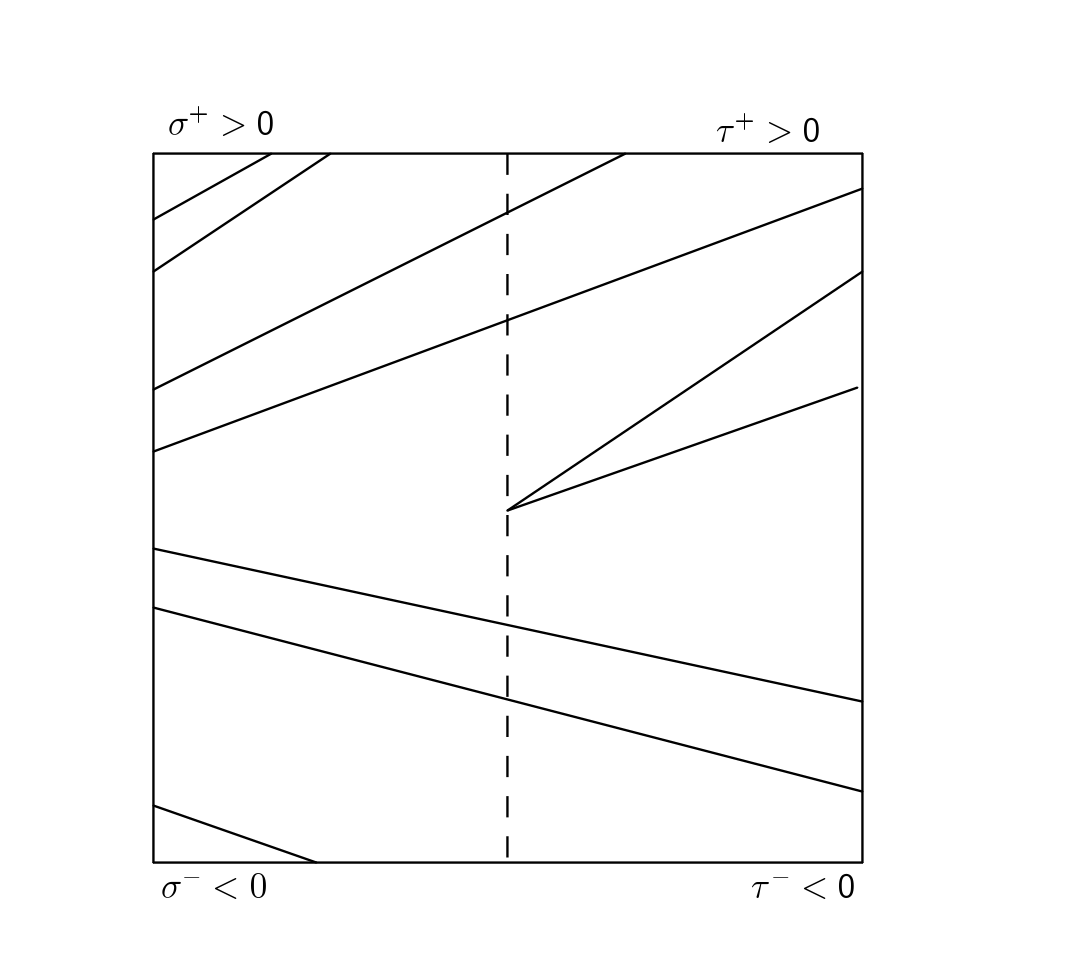}
\caption{\label{matching2}Another example of matching}
  \end{figure} 
 
   \end{exemple}

\subsection{Computing 
adjacency boxes at singular points}

Our aim is now to compute adjacency boxes at singular points.

We need to introduce the following definitions.

In order to distinguish the $Y$-critical points which belong to a horizontal line contained in the  curve and the other $Y$-critical points,
we write
\begin{equation}\label{barP}
Q(X,Y)=c(Y)\cdot \tilde{Q}(X,Y)
\end{equation}
 with $c(Y)\in \Z [Y]$ and  $\tilde{Q}(X,Y)\in  \Z [X, Y]$ such that $\tilde{Q}(X,y)$
never identically vanishes for  any $y \in \mathbb{C}.$ In more geometric terms, we separate the horizontal lines contained in $V_{\mathbb{C}} (Q)$ from the remaining part $V_{\mathbb{C}} (\tilde{Q})$ of the curve, which does not contain any horizontal lines. 

This is done as follows. Let  $Q(X,Y):=c_{\deg_X(Q)}(Y)X^{\deg_X(Q)}+\ldots+c_0(X),$ with $\deg_X(Q)\le d$. Let $c(X)$ the gcd of all the coefficients $c_i(X)$. 
We compute $c(Y)$ and $\tilde{Q}(X,Y)$ using $\tO(d^4+d^3\tau)$ bit operations by Propositions~\ref{Mignotte} and~\ref{gcd-comp}.
The polynomials  
$c(Y)$ and $\tilde{Q}(X,Y)$ have magnitude bounded by $(d,\tau+d+\log (d+1))$.

We introduce the following definitions
 \begin{eqnarray}
  D_Y(Y)& := &\Disc_X (Q) (Y), 
  \label{barD}\\
   S_X(X) &:=& 
   D_X(X) \cdot \res_Y (\tilde Q, \partial_X \tilde Q) (X), \label{eqYcrit}\\
    S_Y(Y) &:=& D_Y(Y)   \cdot 
    \res_X (
    Q, 
    \partial_Y 
    Q) (Y), \label{eqYcritbis}\\
    T_X(X)&:=& S_X(X)\cdot (S_X^{\star})'(X), \label{def:RX}\\
    T_Y(Y)&:=& S_Y(Y)\cdot (S_Y^\star)'(Y).\label{def:RY}
 \end{eqnarray}
 where $(S_X^\star)'$ and  $(S_X^\star)'$ are the derivatives of the square-free parts of $S_X$ and $S_Y$, respectively.
 
 In geometric terms
 \begin{itemize}
 \item the zeroes of $D_Y$ are the projections on the $Y$ axis of the  $Y$-critical points
 or the values $\gamma$ such that $\deg(Q(X,\gamma))<\deg_X(Q)-1$,
 \item the zeroes of $S_X$ are the zeroes of $D_X$ and the projections on the $X$-axis of the $Y$-critical points which do not belong to an horizontal line entirely contained in $V_{\mathbb{R}} (Q)$,
  \item the zeroes of $S_Y$ are the zeroes of $D_Y$ and the projections on the $Y$-axis of the $X$-critical points,
  \item  the zeroes of $T_X(X)$ are the zeroes of $S_X$ and the zeroes of the derivative of its square-free part,
   \item  the zeroes of $T_Y(Y)$ are the zeroes of $S_Y$ and the zeroes of the derivative of its square-free part. 
 \end{itemize}
 Note that between $\alpha_i$ and $\alpha_{i+1}$ there is always one root of $(S_X^\star)'(X)$ and that 
 between two roots of $D_Y$ there is always one root of  $(S_Y^\star)'(Y)$.
 
 \begin{lemma}\label{magni}
 All the polynomials  $D_X,D_Y,S_X,S_Y,T_X,T_Y$ are of magnitude 
bounded by
$(O(d^2), O(d\tau+d^2))$ and can be all computed with a bit complexity $\tO(d^4 \tau+d^5)$.
  \end{lemma}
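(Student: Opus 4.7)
The plan is to compute the six polynomials in dependency order, using Proposition~\ref{subres-comp} for all resultant-type quantities, Proposition~\ref{Mignotte} to control the magnitudes of divisors (both discriminants and square-free parts), Propositions~\ref{gcd-comp} and~\ref{exact_division_comp} for the square-free-part computation, and fast polynomial multiplication for the products.

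By Propositions~\ref{removeverticallines} and~\ref{removehorizontalllines}, both $\tilde P$ and $Q$ have magnitude $(d,O(\tau+d))$, and the same bound (up to an additive $O(\log d)$ in the bitsize) holds for $\partial_X\tilde P$, $\partial_Y\tilde P$, and $\partial_X Q$. Four applications of Proposition~\ref{subres-comp} then produce the polynomials $\mathrm{Res}_Y(\tilde P,\partial_Y\tilde P)$, $\mathrm{Res}_X(\tilde P,\partial_X\tilde P)$, $\mathrm{Res}_Y(Q,\partial_X Q)$ and $\mathrm{Res}_X(\tilde P,\partial_Y\tilde P)$, each of magnitude $(O(d^2),O(d\tau+d^2))$ and each computable in $\tO(d^4\tau+d^5)$ bit operations. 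The discriminants $D_X$ and $D_Y$ are extracted via the identity $\mathrm{LC}\cdot\mathrm{Disc}=\mathrm{Res}(\cdot,(\cdot)')$ by exact division (Proposition~\ref{exact_division_comp}), with the magnitude of the quotient controlled by Proposition~\ref{Mignotte}. Pairwise multiplications then give $S_X$ and $S_Y$, which remain in the magnitude class $(O(d^2),O(d\tau+d^2))$, at cost $\tO(d^3\tau+d^4)$ via fast polynomial multiplication.

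For $T_X=S_X\cdot (S_X^\star)'$ (and symmetrically $T_Y$), I would form $h_X:=\gcd(S_X,S_X')$ via Proposition~\ref{gcd-comp}, extract $S_X^\star:=S_X/h_X$ via Proposition~\ref{exact_division_comp}, differentiate, and multiply back by $S_X$. Proposition~\ref{Mignotte} applied to the divisor $S_X^\star$ of $S_X$ ensures that $S_X^\star$, and hence $T_X$, still lies in the magnitude class $(O(d^2),O(d\tau+d^2))$.

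The main obstacle is keeping the square-free part within the target complexity. A direct application of Proposition~\ref{gcd-comp} to $(S_X,S_X')$ with $S_X$ of magnitude $(O(d^2),O(d\tau+d^2))$ would give $\tO(d^4\cdot(d\tau+d^2))=\tO(d^5\tau+d^6)$ bit operations, a factor $d$ above the announced bound. To recover $\tO(d^4\tau+d^5)$, one should exploit the known factorization $S_X=D_X\cdot\mathrm{Res}_Y(Q,\partial_X Q)$ together with a Yun-type square-free factorization combined with fast polynomial arithmetic, which extracts $S_X^\star$ at a cost comparable to that of the initial subresultant computations rather than on the expanded product. All remaining arithmetic (exact divisions, derivatives, final multiplications) fits comfortably within $\tO(d^3\tau+d^4)$ bit operations, so the total is dominated by the four subresultant computations and gives the claimed $\tO(d^4\tau+d^5)$ bound.
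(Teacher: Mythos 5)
Your overall route is the one the paper takes: the paper's entire proof of Lemma \ref{magni} consists of citing Proposition \ref{gcd-comp} and Proposition \ref{subres-comp}, and your treatment of $D_X,D_Y,S_X,S_Y$ --- four subresultant computations on polynomials of magnitude $(d,O(\tau+d))$, namely $\res_Y(\tilde P,\partial_Y\tilde P)$, $\res_X(\tilde P,\partial_X\tilde P)$, $\res_Y(Q,\partial_X Q)$, $\res_X(\tilde P,\partial_Y\tilde P)$, followed by exact divisions and pairwise products --- is exactly what is intended. For those four polynomials your magnitude bound $(O(d^2),O(d\tau+d^2))$ and cost $\tO(d^4\tau+d^5)$ are correct.

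The gap is in your last paragraph. You correctly observe that a direct call to Proposition \ref{gcd-comp} on $(S_X,S_X')$, with $S_X$ of magnitude $(O(d^2),O(d\tau+d^2))$, costs $\tO\bigl(d^4\cdot(d\tau+d^2)\bigr)=\tO(d^5\tau+d^6)$, but the repair you propose does not close this. The factorization $S_X=D_X\cdot\res_Y(Q,\partial_X Q)$ does not reduce the problem: each factor is itself of magnitude $(O(d^2),O(d\tau+d^2))$, so extracting its square-free part is just as expensive by the same estimate; the two factors may share roots, so $S_X^\star$ is not the product of their square-free parts; and Yun's algorithm is dominated by the initial $\gcd(f,f')$, so it brings no asymptotic gain over the plain gcd. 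You have therefore identified a real discrepancy without resolving it --- and, to be fair, the paper's own two-line proof does not resolve it either: taken literally, the cited propositions only yield $\tO(d^5\tau+d^6)$ for the $T_X,T_Y$ step. This is harmless for the paper's main results, since Lemma \ref{magni} is only invoked inside computations (Theorem \ref{sing-fibers}, Proposition \ref{indexofcritical}, Proposition \ref{intermedfibers_smallenough}) whose budgets are already $\tO(d^5\tau+d^6)$; but to defend the stated bound $\tO(d^4\tau+d^5)$ you would need a genuinely faster square-free-part computation, not the argument sketched here.
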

  \begin{proof}.
Use Proposition  \ref{gcd-comp} and \cite[Prop.~8.46]{BPRbook2},\cite[\S 11.2]{GG}
  \end{proof}

  We 
   denote by $\xi_1,\ldots,\xi_{N'}$, with
  \begin{equation}
\xi_1<\ldots<\xi_{N'},
\end{equation}
 the real roots of $(S_X^\star)'(X)$, and 
 $\xi_0=-\cau(T_X)$, $\xi_{N'+1}=\cau(T_X)$ using Notation \ref{cauchybounds}, such that $\xi_0$ (resp. 
 $\xi_{N'+1}$) is smaller (resp. bigger) than all real roots of $S_X$ and $(S_X^\star)'$.  Remark that $\xi_0$ and $\xi_{N'+1}$ are rational numbers of bitsize $\tO (d\tau+d^2)$.
 
 For every $i=1,\ldots,N$, we denote
 $\alpha^-_i$ and $\alpha^+_i$ the two successive elements of $$\{\xi_0,\xi_1,\ldots,\xi_{N'},\xi_{N'+1}\}$$
defined by
 $\alpha_i\in (\alpha^-_i,\alpha^+_i)$. Note that $\alpha_i$ is the
 only root of $S_X$ (hence of $D_X$)  in $(\alpha^-_i,\alpha^+_i)$. 
 
 In the $Y$-direction, we denote by $\gamma_1,\ldots,\gamma_M$, with
 $$\gamma_1<\ldots<\gamma_M,$$ the real roots of $S_Y(Y)$.
   We also denote by 
   \begin{equation}
   \eta_1<\ldots<\eta_{M'},
     \end{equation}
   the real roots of $(S_Y^\star)'(Y)$, and 
 $\eta_0=-\cau(T_Y)$, $\eta_{M'+1}=\cau(T_Y))$ using Notation \ref{cauchybounds}, such that $\eta_0$ (resp. 
 $\eta_{M'+1}$) is smaller (resp. bigger) than all real roots of $S_Y$ and $(S_Y^\star)'$. Remark that $\eta_0$ and $\eta_{M'+1}$ are rational numbers of bitsize ${O} (d\tau+d^2)$.
 For every $k=1,\ldots,M$,
 we denote by
 $\gamma^-_k$ and $\gamma^+_k$ the two successive elements of $$\{\eta_0,\eta_1,\ldots,\eta_{M'},\eta_{M'+1}\}$$
defined by
  $\gamma_i\in (\gamma^-_i,\gamma^+_i)$. Note that $\gamma_i$ is the
 only root of $S_Y$ (hence of $D_Y$)  in $(\gamma^-_i,\gamma^+_i)$.
 
For every singular point $(\alpha_i,\beta_{i,j})$, notice that $\beta_{i,j}$ is a root of $S_Y$. 
We define
$k(i,j)$ 
as
the index such that 
$\gamma_{k(i,j)}=\beta_{i,j}$.

For each $i=1,\ldots,N$, we denote by
\begin{equation}
\eta^-_{i,1}<\ldots<\eta^-_{i,m^-_i} (\mathrm{resp. }  \eta^+_{i,1}<\ldots<\eta^+_{i,m^+_i})
\end{equation}
 the real roots of $Q(\alpha_i^-,Y)$ (resp. $Q(\alpha_i^+,Y)$).
  
  For each $k=1,\ldots,M$, we denote by
\begin{equation}
\xi^-_{k,1}<\ldots<\xi^-_{k,n^-_k} (\mathrm{resp. }  \xi^+_{k,1}<\ldots<\xi^+_{k,n^+_k})
\end{equation}
 the real roots of $Q(X,\gamma_k^-)$ (resp.  $Q(X,\gamma_k^+)$).

We suppose the computations of Theorem \ref{sing-fibers0} have already been performed and prove the following result. Its 
Corollary \ref{cor:adjacency} provides adjacency boxes at critical points.

\begin{theorem}\label{sing-fibers}

 Using $\tO (d^5 \tau + d^6)$ bit operations, we can carry out the following computations:
 \begin{itemize}
 \item[(a.1)]
  Dyadic intervals $H_i,V_{i,j}$
for $1\le i \le N, 1 \le j \le m_i$
such that
$ H_i$
 is well-isolating for $\alpha_i$  as a root of $T_X$,
 and $ V_{i,j}$
is well-isolating for $\beta_{i,j}$ as a root of $Q(\alpha_i,Y)$.
  \item[(a.2)]
 Dyadic intervals $H^-_i, V^-_{i,j}$, (resp. $H^+_i, V^+_{i,j}$)
  for $1\le i \le N, 1 \le j \le m^-_i$ (resp. $j\le  m^+_i$)
such that
 $ H^-_i$ (resp  $ H^+_i$)
 is well-isolating for $\alpha^-_i$ (resp. $\alpha^+_i$) as a root of $T_X$,
and $V^-_{i,j}$ (resp. $V^+_{i,j}$)
 is well-isolating for $\eta^-_{i,j}$ (resp. $\eta^+_{i,j}$)  as a root of $Q(\alpha^-_i,Y)$ (resp.  $Q(\alpha^+_i,Y)$).
 Moreover the signs of $\partial_XQ(\alpha^-_i,\eta^-_{i,j})$ and $\partial_YQ(\alpha^-_i,\eta^-_{i,j})$ (resp. $\partial_XQ(\alpha^+_i,\eta^+_{i,j})$ and 
 $\partial_YQ(\alpha^+_i,\eta^+_{i,j})$ are also part of the output.
 \item[(b)] Dyadic intervals $V^-_{k}, H^-_{k,\ell}$ (resp. $V^+_{k}, H^+_{k,\ell}$)
   for $1\le k \le M, 1 \le \ell \le n^-_k$ (resp. $\ell \le  n^+_k$) such that
$V^-_{k}$ (resp. $H^+_{k}$) is well-isolating for $\gamma^-_{k}$ (resp. $\gamma^+_{k}$) as a root of 
$T_Y$ 
and $H^-_{k,\ell}$ (resp.  $H^+_{k,\ell}$) is well-isolating for $\xi^-_{k,\ell}$ (resp. $\xi^+_{k,\ell}$) as a root of 
$Q(X,\gamma^-_k)$ (resp.  $Q(X,\gamma^+_k)$).
 Moreover the signs of $\partial_XQ(\xi^-_{k,\ell},\gamma^-_{k})$ and $\partial_YQ(\xi^-_{k,\ell},\gamma^-_{k})$ (resp. $\partial_XQ(\xi^+_{k,\ell},\gamma^+_{k})$ and 
 $\partial_YQ(\partial_XQ(\xi^+_{k,\ell},\gamma^+_{k}))$ are also part of the output.
 \item[(c.1)]  Dyadic intervals $V_k$, for  $k=1,\ldots,M$, that are well-isolating for $\gamma_k$  as roots of $T_Y$ and for each 
 $i=1,\ldots,N$, $j\in 
\textsc{SingInd}_i$,
 the index $k(i,j)$ such that $\beta_{i,j}=\gamma_{k(i,j)}$
 \item[(c.2)] For each $i=1,\ldots,N$, $j\in 
\textsc{SingInd}_i$,
the intervals $H^-_{k(i,j),\ell}$ for $\ell=1,\ldots,n^-_{k(i,j),\ell}$  (resp. $H^+_{k(i,j),\ell}$ for  $\ell=1,\ldots,n^+_{k(i,j),\ell}$)
contain at most one of the three points  $\alpha^-_i$, $\alpha_i$, $\alpha^+_i$.
\item[(c.3)] For each $i=1,\ldots,N$, $j\in 
\textsc{SingInd}_i$,
 the intervals $V^-_{i,j'}$ for $j'=1,\ldots,m^-_{i}$  (resp. $V^+_{i,j'}$ for $j'=1,\ldots,m^+_{i}$) contain at most one of the two points   $\gamma^-_{k(i,j)}$  $\gamma^+_{k(i,j)}$.
  \end{itemize}
Moreover, it holds  that
 \begin{equation}
 \sum^{N}_{i = 1} \lambda (H_i) (\text{resp. }\ \sum^{N}_{i = 1} \lambda (H^-_i),\sum^{N}_{i = 1} \lambda (H^+_i))
 \in \tilde O (d^3 \tau+d^4), \label{eqboxa1}
 \end{equation} 
 \begin{equation}
 \sum^{N}_{i = 1} \sum^{m_i}_{j = 1} \lambda (V_{i, j}) (\mathrm{resp.}\  \sum^{N}_{i = 1} \sum^{m^-_i}_{j = 1} \lambda (V^-_{i, j}),\sum^{N}_{i = 1} \sum^{m^+_i}_{j = 1} \lambda (V^+_{i, j}))
\in \tilde O (d^3
 \tau+d^4) \label{eqboxa2} .
 \end{equation}
  \begin{equation}
 \sum^{M}_{k = 1}   \lambda (V_k)(\mathrm{resp.}\ \sum^{M}_{k = 1}  \lambda (V_k^-),\ \sum^{M}_{k = 1}   \lambda (V_k^+))  \in \tilde O (d^3
 \tau+d^4) \label{eqboxe1} .
 \end{equation}
 \begin{equation}
 \sum^{M}_{k = 1} \sum^{n^-_{k}}_{\ell = 1} \lambda (H^-_{k,\ell})  (\mathrm{resp.}\ \sum^{M}_{k = 1} \sum^{n^+_{k}}_{\ell = 1} \lambda (H^+_{k,\ell})) 
\in \tilde O (d^3
 \tau+d^4) \label{eqboxe2} .
 \end{equation}
\end{theorem}

\begin{remark}\label{amb}
Note that  Theorem \ref{sing-fibers} (c.2) does not decide whether
$\xi^+_{k(i,j),\ell}<\alpha_i$,
$\xi^+_{k(i,j),\ell}=\alpha_i$ or $\xi^+_{k(i,j),\ell}>\alpha_i$
in the case where
$H^+_{k(i,j),\ell}$ intersects $H_i$.
 Indeed, we do not compute the sign of $Q(\alpha_i,\gamma_{k(i,j)}^+)$.
It would be of course possible to obtain this information using exact computations, but not within the complexity bounds we are aiming for in this paper: 
to the best of our knowledge, the computation for this decision would exceed $\tO (d^5 \tau + d^6)$ bit operations, 
because the distance between $\alpha_i$ and $\xi^+_{k(i,j),\ell}$ can be very small and it would be very costly to refine  $H^+_{k(i,j),\ell}$ and $H_i$ enough for this decision.

The same remark holds for similar statements covering  the other cases considered in (c.2) and (c.3).
\end{remark}

\begin{proof}
For (a.1) first note, using Lemma \ref{magni}, that $T_X$ is a polynomial of magnitude
bounded by
 $(N,\Lambda)\in (O(d^2),\tO(d\tau+d^2))$.  Hence, Proposition \ref{sagraloff-isolation-integer} and Proposition \ref{comparingroots}, 
we can compute well-isolating intervals for the real roots of $T_X$ and identify the real roots of $D_X$ in a number of bit operations bounded by $\tO(d^5\tau+d^6)$.
 According to 
Proposition~\ref{prop:computinggcddeg} (with $R:=D_X$ and $F:=Q(X,Y)$), we may further compute 
$\deg(\gcd(Q(\alpha_i,Y),\partial_Y Q(\alpha_i,Y)))$ for all $i$ from $1$ to $N$  using $\tO(d^5\tau+d^6)$ bit operations. Now, from Proposition~\ref{thm:costisolation}, we conclude that  using also $\tO(d^5\tau+d^6)$ bit operations we can further compute 
well-isolating intervals for all real roots of the 
polynomials $Q(\alpha_i,Y)$.

For (a.2), using Proposition \ref{sagraloff-isolation-integer} and Proposition \ref{comparingroots}, 
we can compute well-isolating intervals for the real roots of $T_X$ and identify the real roots of $(D_X^\star)'$ in a number of bit operations bounded by $\tO(d^5\tau+d^6)$.
 According to 
Proposition~\ref{prop:computinggcddeg} (with $R:=(T_X^\star)'$ and $F:=Q(X,Y)$), we may further compute $\deg Q(\xi_i,Y)$.
Now, from Proposition~\ref{thm:costisolation}, we conclude that within a number of bit operations bounded by $\tO(d^5\tau+d^6)$ we can further compute well-isolating intervals for all real roots of the 
polynomials  
$$Q(\xi_i,Y) \cdot \partial_XQ(\xi_i,Y)\cdot \partial_YQ(\xi_i,Y),$$
and identify the roots of $Q(\xi_i,Y)$ for each $i=1,\ldots,N'$ by  Proposition \ref{thm:comparinginfibers}.
It remains to compute isolating intervals for all the 
 roots of the 
polynomials  
$$Q(\xi_0,Y) \cdot \partial_XQ(\xi_0,Y)\cdot \partial_YQ(\xi_0,Y),$$
and
$$Q(\xi_N',Y)\cdot  \partial_XQ(\xi_N',Y)\cdot \partial_YQ(\xi_N',Y).$$
It is then easy to identify $\alpha_i^-$ and $\alpha_i^+$ as well as $H^-_i,V^-_{i,j'},V^+_i,V^+_{i,j'}$ as part of the results of the preceding computations.
 The bound on the sum of the bitsizes of the 
intervals $H^-_i,V^-_{i,j'},H^+_i,V^+_{i,j'}$
follows from Part (a.1) of Proposition~\ref{thm:costisolation} and Proposition \ref{thm:comparinginfibers}.

Part (b) is entirely similar to Part (a.2), exchanging the role of $X$ and $Y$.

In Part (c.1), the computation of $T_Y$ takes $\tO(d^4 \tau+d^5)$ according to Lemma \ref{magni}, and the computation of the $V_k$, $k=1,\ldots,M$ uses a number of bit operations bounded by $\tO(d^5\tau+d^6)$ from Proposition \ref{sagraloff-isolation-integer} and Proposition \ref{comparingroots} since $T_Y$ is of magnitude
bounded by
 $(O(d^2),O(d\tau+d^2))$.
 
 We define $I^1_i:=\textsc{SingInd}_i$. 
In order to compute the numbers $k(i,j)$ for each 
$\beta_{i,j}, j \in \textsc{SingInd}_i$, we proceed in rounds enumerated by $\ell=1,2,3,\ldots$. In the $\ell$-th round, we refine the isolating intervals for all $i$ and all roots $\beta_{i,j}$ with $j \in I^\ell_i$ to a size less than $2^{-2^{\ell}}$. If the corresponding isolating interval 
$V_{i,j}$
 intersects with at most one isolating interval $V_k$ for the roots of 
$T_Y$, we know that $k=k(i,j)$. After having treated all elements in $I^{\ell}_i$, we set $I^{\ell+1}_i$ to be the set of all critical indices in $I^\ell_i$ for which the isolating interval
 $V_{i,j}$ for $\beta_{i,j}$ intersects more than one of the intervals $V_k$. That is, $I^{\ell}_i$ is the set of all critical indices  for which $k(i,j)$ is not known after the $\ell$-th round. We then proceed with the $(\ell+1)$-st round. We stop as soon as $I^{\ell}_i$ becomes empty for every $i=1,\ldots,N$, in which case, $k(i,j)$ is determined for all $X$-critical points.

We use the polynomial $R_Y$ defined in Proposition \ref{multmult1}, with $F=Q$, remembering that the roots of $R_Y$ contain the projections of the $Y$-critical points of $Q$.
Notice that, for each critical point $(\alpha_i,\beta_{i,j})$, we succeed in round $\ell_{i,j}$, where
$2^{\ell_{i,j}}$ is bounded by $O(|\log(\sep(\beta_{i,j},T_Y R_Y))|)$. That is, $j \notin I^{\ell}_i$ for any $\ell>\ell_{i,j}$.

In addition, the cost of the test for checking whether the interval $V_{i,j}$ intersects with exactly one isolating interval $V_k$ is bounded by $\tO(d^3\tau+d^4)$ bit operations in each round. Indeed, 
 we need to consider only $O(\log (d))$ comparisons between corresponding
endpoints of the occurring intervals and each comparison is carried out with a precision bounded by $\tO(d^3\tau+d^4)$, using that, from the amortized bounds on the separation of the roots  (Proposition \ref{corodisc}) $$O(|\log(\sep(\beta_{i,j},T_Y R_Y))|)\in O(d^3\tau+d^4).$$ Since there are $O(d^2)$ many critical points, the 
total cost for the comparisons is thus bounded by $\tO(d^5\tau+d^6)$.

 It remains to estimate the cost for refining the intervals $V_{i,j}$ to a width less than $2^{-2^{\ell_{i,j}}}$ for all $i,j$.
Using again $O(|\log(\sep(\beta_{i,j},T_Y R_Y))|)\in O(d^3\tau+d^4)$, 
we are done after $\kappa$ rounds for  
\begin{equation}\label{kappa1}
\kappa=\max_{i,j} \ell_{i,j}+1 \in O(\log(d^4+d^3 \tau)).
\end{equation}
 According to Proposition~\ref{thm:costisolation}, this cost is bounded by
\[
\sum_{\ell=1}^\kappa \tO(d^5\tau+d^6+ 2^{\ell}d^2\cdot \sum_{i=1}^{N}\mu_{i}^{[\ell]})\in \tO(d^5\tau+d^6)+\tO(d^2\cdot\sum_{\ell} 2^{\ell}\cdot \sum_{i=1}^{N}\mu_{i}^{[\ell]}),
\]
where, denoting $\mu_{i,j}=\mult(\beta_{i,j},Q(\alpha_i,Y))$
\[
\mu_{i}^{[\ell]}:=
\begin{cases}
\max_{j \in I^{\ell}_i} 
\mu_{i,j}&\text{if  } I^\ell_x\not= \emptyset\\
0 &\text{otherwise.}
\end{cases}
\]

Hence, it suffices to show that 
$$\sum_{\ell=1}^\kappa 2^{\ell}\cdot \sum_{i=1}^{N}\mu_{i}^{[\ell]}\in\tO(d^3\tau+d^4).$$
 If  $I^\ell_i\not=\emptyset$, let $j^{[\ell]}_i\in I^\ell_i$ be such that $
 \mu_{i,j^{[\ell]}_i}=\mu_{i}^{[\ell]}$. In other words, $j^{[\ell]}_i$ is the critical index $I^{\ell}_i$ over $\alpha_i$ which maximizes the multiplicity within the fiber. We may thus write
\[
\sum_{\ell=1}^\kappa 2^{\ell}\cdot \sum_{i=1}^{N}\mu_{i}^{[\ell]}=\sum_{\ell=1}^\kappa 2^{\ell}\sum_{i,I^\ell_i\not=\emptyset}
 \mu_{i,j^{[\ell]}_i}
\]
Obviously, each critical index $(i,j)$
 appears in the latter sum a number of times that is bounded by $\kappa\in O(\log(d^3\tau+d^4))$. In addition, since $ (\alpha_i,\beta_{i,j})\notin I_{\kappa}$ 
and $2^{\ell_{i,j}}\in O(|\log(\sep(\beta_{i,j},T_Y R_Y))|)$, it follows that,
\[
\sum_{\ell=1}^\kappa 2^{\ell}\sum_{i=1}^N \sum_{j\in I^\ell_i}
\mu_{i,j}
\in \tO(\sum_{i=1}^N \sum_{j\in \textsc{SingInd}_i}
\mu_{i,j}
\cdot|\log(\sep(\beta_{i,j},T_Y R_Y))|).
\]
 Since 
\[
\sum_{\ell=1}^\kappa 2^{\ell}\sum_{i,I^\ell_i\not=\emptyset}
\mu_{i}^{[\ell]}
\le \sum_{\ell=1}^\kappa 2^{\ell}\sum_{i=1}^N \sum_{j\in I^\ell_i}
\mu_{i,j},\]
we have
\[
\sum_{\ell=1}^\kappa 2^{\ell}\sum_{i,I^\ell_i\not=\emptyset}
\mu_{i}^{[\ell]}
\in \tO(\sum_{i=1}^N \sum_{j\in \textsc{SingInd}_i}
\mu_{i,j}
\cdot|\log(\sep(\beta_{i,j},T_Y R_Y))|).
\]

According to Proposition~\ref{multmult1}, it holds that, for any fixed value $\beta_{i,j}$,
\[
\sum_{(\alpha_i,\beta_{i,j}):\beta_{i,j}=\gamma}(
\mu_{i,j}-1)\le\mult(\gamma,R_Y)).
\] 
Hence, since 
$\mu_{i,j}\le 2\cdot(\mu_{i,j}-1)$
we conclude that
$$\sum_{(i,j):\beta_{i,j}=\gamma}
\mu_{i,j} \le 2\cdot \mult(\gamma,T_Y R_Y).$$
This shows that
\[
\sum_{\ell=1}^\kappa 2^{\ell} \sum_{i=1}^{N}\mu_{i}^{[\ell]}\in \tO(
\logsep(T_Y R_Y))
\in\tO(d^3\tau+d^4).
\]
using
Proposition \ref{corodisc}.

Part (c.2,c.3) can be proved as  follows.
We treat in details the case of intervals  $V^-_{i,j}$ which is the first half of c3).

For each fixed $k$,  we define the set of \emph{indices of singular places at level $k$} as
\[
\textsc{SingPl}_k:=\{i\in\{1,\ldots,N\}:(\alpha_i,\gamma_k)\in \Sing(V_{\mathbb{R}}(Q))\}.
\]
with, as before,
with
$\Sing(V_{\mathbb{R}}(Q)) := \{(x, y) \in \mathbb{R}^2 \mid Q(x, y) =\partial_Y Q(x,y)=\partial_X Q(x,y)= 0\}.$ 
Let $V^-_{i,j'}$ be the intervals as computed according to
Part (a.2). In what follows, we restrict to the set $I^1$ of all so-called \emph{bad pairs} $(i,j')$ of indices such that the  
interval $V^-_{i,j'}$ intersects two intervals $V_{k(i,j)}^-$ and $V_{k(i,j)}^+$, and define $I^1_i$ as the set of corresponding indices above $i$.
 We fix a mapping $\phi$ that maps 
each such bad pair $(i,j')$ to an arbitrary index $k(i,j)$ (there might exist more than one such index)
such that $V^-_{i,j'}$ intersects the intervals $V_{k(i,j)}^-$ and $V_{k(i,j)}^+$ and moreover $\gamma_{k(i,j)}^+-\gamma_{k(i,j)}^-$ is minimal with this property. Then, the size of the preimage of each $k(i,j)$ is upper bounded by $\textsc{SingPl}_{k(i,j)}$. Namely, for a fixed $i$, there can be at most one interval $V_{i,j'}^-$ intersecting the two intervals $V_{k(i,j)}^-$ and $V_{k(i,j)}^+$. We thus conclude, using Proposition \ref{multmult1} and its notation, with $F=Q$, that 
$$|\Phi^{-1}(k(i,j))|\le  \#\textsc{SingPl}_{k(i,j)}\le 
\mult(\gamma_{k(i,j)},R_Y) \le 
\mult(\gamma_{k(i,j)},R_YT_Y),
$$
which further implies that $I^1$ contains at most $O(d^2)$ many elements. Further notice that a pair $(i,j)$ cannot be bad if the width of the corresponding interval $V^-_{i,j'}$ is smaller than $\frac{1}{2}\cdot \sep(\gamma,T_Y)$ as the distance between  the intervals $V_k^-$ and $V_k^+$ is at least $\frac{1}{2}\cdot\min(|\gamma_{k(i,j)}^-\gamma_{k(i,j)}^-|)\le \frac{1}{2}\cdot \sep(\gamma^-_{k(i,j)},T_Y)$. Hence, in order to guarantee that the  $V^-_{i,j'}$ does not intersects two intervals $V_{k(i,j)}^-$ and $V_{k(i,j)}^+$, it is enough to refine the intervals $V^-_{i,j'}$ with $(i,j')\in I_1$ to a width smaller than $\frac{1}{2}\cdot \sep(\gamma^-_{k(i,j)},T_Y)$. 

For this,  we proceed in rounds enumerated by $\ell=1,2,3,\ldots$, where, in the $\ell$-th round, we refine all intervals $V^-_{i,j'}$ with $(i,j')\in I^{\ell}_i$ to a width less than $2^{-2^{\ell}}$. Then, we remove all pairs $j'$ from $I^\ell_i$ that are not bad anymore to obtain $I^{\ell+1}_i$. In other words, $I^{\ell+1}_i$ is the set of all pairs for which $V^-_{i,j}$  intersects  two intervals $V_{k(i,j)}^-$ and $V_{k(i,j)}^+$ after the $\ell$-th round. We then proceed with the $(\ell+1)$-st round. We stop as soon as all  $I^{\ell}_i$ becomes empty, in which case, none of the intervals $V^-_{i,j'}$ violates the condition.

Notice that an interval $V^-_{i,j'}$ is removed after $\ell_{i,j'}$ rounds, where $2^{\ell_{i,j'}}$ is bounded by $O(|\log(\sep(\gamma_{\phi((i,j'))},T_Y))|)$. That is, $j\notin I^{\ell}_i$ for any $\ell>\ell_{i,j'}$. This further implies that each root contained in $\phi(V_\ell)$ has separation smaller than $2^{1+2^{-\ell}}$. Further notice that,  from the amortized bounds on the separation of the roots  (Proposition \ref{corodisc})
$$O(|\log(\sep(\gamma_{\phi((i,j'))},T_Y))|)\in O(d^3\tau+d^4),$$ so that the test  checking whether $j'\in I^\ell_i$ is bounded by $\tO(d^3\tau+d^4)$ bit operations in each round as we need to consider only $O(\log(d))$ comparisons between corresponding
endpoints of the occurring intervals and each comparison is carried out with a precision bounded by $\tO(d^3\tau+d^4)$. Since there are $O(d^2)$ many element in each $I$, the 
total cost for the comparisons is thus bounded by $\tO(d^5\tau+d^6)$.

 It remains to estimate the cost for refining the intervals $V^-_{i,j'}$ to a width less than $2^{-2^{\ell_{i,j}}}$ for all $(i,j)\in I$.
Using again $O(|\log(\sep(\gamma_{\phi((i,j'))},T_Y))|)\in O(d^3\tau+d^4),$
$$\kappa=\max_{i,j'} \ell_{i,j'}+1\in O(\log(d^3 \tau+d^4)).$$
 According to Proposition~\ref{thm:costisolation}, this cost is bounded by
\[
\sum_{\ell=1}^\kappa \tO(d^5\tau+d^6+ 2^{\ell}d^2\cdot \lambda_{\ell})\in \tO(d^5\tau+d^6)+\tO(d^2\cdot\sum_{\ell} 2^{\ell}\cdot|\{i\mid I^\ell_i\not= \emptyset\}|,
\]
 Hence, it suffices to show that 
$$\sum_{\ell=1}^\kappa 2^{\ell}\cdot |\{i\mid I^\ell_i\not= \emptyset\}|\in\tO(d^4+d^3\tau),$$
or alternatively that
\begin{align}\label{sumoverellandgammas}
\sum_{\ell=1}^\kappa \sum_{k(i,j):\log(\sep(\gamma_{k(i,j)},R_YT_Y))<2^{1-2^\ell}}\mult(\gamma_{k(i,j)},R_YT_Y)\cdot|\log(\sep(\gamma_{k(i,j)},R_YT_Y))| \in\tO(d^3\tau+d^4).
\end{align}
as each root $\gamma_{k(i,j)}$ has at most $\mult(\gamma_{k(i,j)},R_YT_Y)$ preimages under the mapping $\phi$ and $\phi(I^\ell_i)$ contains only roots $\gamma_{k(i,j)}$ of separation smaller than $2\cdot 2^{1-2^\ell}$.
Since $\kappa\in O(\log(d^3\tau+d^4))$ and since $\sum_{k=1}^M \mult(\gamma_{k(i,j)},R_YT_Y)\cdot|\log(\sep(\gamma_k,R_YT_Y))| \in\tO(d^3\tau+d^4)
$, we conclude that the inequality in (\ref{sumoverellandgammas}) holds.

The case of intervals  $V^+_{i,j'}$ which is the second half of (c.1) is entirely similar.
We also omit the proof of (c.2) which is quite similar, exchanging the role of $X$ and $Y$ and treating first the case $\alpha_i^-,\alpha_i$ and second the case $\alpha_i,\alpha_i^+$.
\end{proof}

 Using the notation of Theorem \ref{sing-fibers}, we have
\begin{corollary}
 \label{cor:adjacency}
For every $i$ and $j\in \textsc{SingInd}_i$, $$\bbox(\alpha_i,\beta_{i,j})=[\alpha^-_i,\alpha^+_i]\times[\gamma^-_{k(i,j)},\gamma^+_{k(i,j)}]$$
is an adjacency bor for $(\alpha_i,\beta_{i,j})$.

Moreover if 
\begin{itemize}
\item $H_i^-$ does not  intersect any $H_{k(i,j),\ell}^-$ or $V_{k(i,j)}^-$  does not  intersect any $V_{i,\ell}^-$ 
\item $H_i$ does not  intersect any $H_{k(i,j),\ell}^-$ or $V_{k(i,j)}^-$  does not  intersect any $V_{i,\ell}$ 
\item $H_i^+$ does not  intersect any $H_{k(i,j),\ell}^-$ or $V_{k(i,j)}^-$  does not  intersect any $V_{i,\ell}^-$ 
\item $H_i^-$ does not  intersect any $H_{k(i,j),\ell}^+$ or $V_{k(i,j)}^+$  does not  intersect any $V_{i,\ell}^-$ 
\item $H_i$ does not  intersect any $H_{k(i,j),\ell}^+$ or $V_{k(i,j)}^+$  does not  intersect any $V_{i,\ell}$ 
\item $H_i^+$ does not  intersect any $H_{k(i,j),\ell}^+$ or $V_{k(i,j)}^+$  does not  intersect any $V_{i,\ell}^+$ 
\end{itemize}
the numbers 
$\textsc{Left}_{i,j}$ and $\textsc{Right}_{i,j}$ are known.
\end{corollary}
\begin{proof}
The fact that $\bbox(\alpha_i,\beta_{i,j})$ satisfies the conditions of Definition \ref{def:adjac} follows clearly from Theorem \ref{sing-fibers} (a) and (b) given the definition of $T_X$ and $T_Y$. 

It is clear that if  $H_i^-$ does not  intersect any $H_{k(i,j),\ell}^-$, $H_i^-$ does not  intersect any $H_{k(i,j),\ell}^-$ and $H_i^+$ does not  intersect any $H_{k(i,j),\ell}^-$, then $\#H_{\gamma_{k(i,j)}^-}^{= \alpha_i^-}=\#H_{\gamma_{k(i,j)}^-}^{= \alpha_i}=\#H_{\gamma_{k(i,j)}^-}^{= \alpha_i^+}=0$ and the quantities $\#H_{\gamma_{k(i,j)}^-}^{ < \alpha_i}$ and   $\#H_{\gamma_{k(i,j)}^-}^{ > \alpha_i}$ are determined from the output of Theorem \ref{sing-fibers}.
Moreover even if $V_{k(i,j)}^-$  intersect an interval $V_{i,\ell}^-$  we can compare $\eta_{i,\ell}^-$ and $\gamma_{k(i,h)}^-$, because we know the sign of $Q(\alpha_i^-,\gamma_{k(i,h)}^-)$.

Taking into account all the other items, it is easy to see that there is enough information to determine  from the output of Theorem \ref{sing-fibers} all the numbers necessary to compute $\textsc{Left}_{i,j}$ and $\textsc{Right}_{i,j}$ according to Algorithm \ref{algoconnect}.
\end{proof}

 \subsection{Computing the 
 number of curve segments arriving to the left and to the right of a singular point}\label{adjacencyboxes}

Our aim is to 
compute
for all adjacency boxes  $\bbox(\alpha_i,\beta_{i,j})$ associated to 
singular points $(\alpha_i,\beta_{i,j})$, the numbers $\textsc{Left}_{i,j}$ and $\textsc{Right}_{i,j}$ of 
curve segments
 arriving at
$(\alpha_i,\beta_{i,j})$ to the left and to the right using the formulas in Algorithm \ref{algoconnect}.

Since we concentrate on a fixed singular point, we can forget about the indices in the subsection; Consider 
a 
singular 
point $(\alpha,\beta)=(\alpha_i,\beta_{i,j})$, and its adjacency box  $\bbox(\alpha,\beta)$ with
$\alpha^-=\alpha^-_i,\alpha^+=\alpha^+_i$
and
$\gamma^-=\gamma^-_{k(i,j)},\gamma^+=\gamma^+_{k(i,j)}$, where
$\alpha^-,\alpha,\alpha^+$ and $\gamma^-,\gamma^+$ are given by dyadic intervals $(a^-,a'^-),(a,a'),(a^+,a'^+)$ and $(c^-,c'^-),(c^+,c'^+)$.
Denote $\textsc{Left}=\textsc{Left}_{i,j}$ and $\textsc{Right}=\textsc{Right}_{i,j}.$

The information computed in Theorem \ref{sing-fibers} is not always sufficient to determine the quantities involved in  formulas (\ref{totheleft1}, \ref{totheleft2}, \ref{totheleft3}, \ref{totheleft4}, \ref{totheleft5}, \ref{totheleft6}) and (\ref{totheright1}, \ref{totheright2}, \ref{totheright3}, \ref{totheright4}, \ref{totheright5},  \ref{totheright6}) of Algorithm \ref{algoconnect}. For example,
if at the corner $(\alpha^-,\gamma^+)$, the interval $(x,x')$
isolating a root $\xi$ of $Q(X,\gamma^+)$  contains 
$\alpha^-$ and   the interval $(y,y')$
isolating a root $\eta$ of $Q(\alpha^-,Y)$  contains 
$\gamma^+$,  we do not know the sign $Q(\alpha^-,\gamma^+)$ so we do not know whether $\xi<\alpha^-$, $\xi=\alpha^-$ or $\xi>\alpha^-$ (resp. 
$\eta<\gamma^+$, $\eta=\gamma^+$ or $\eta>\gamma^+$)
(see Remark \ref{amb})
and we cannot determine
 $\#V_{\alpha^-}$, $\#H_{\gamma^+}^{= \alpha^-}$, $\#H_{\gamma^+}^{ < \alpha}$.

So we introduce the following definition to deal with such situations.

\begin{definition}
The corner $(\alpha^-,\gamma^+)$ is
{\bf ambiguous}
  if there are intervals 
$(x,x')$ and  $(y,y')$ output by Theorem \ref{sing-fibers}
such that 
$(x,x')$, 
isolating a root $\xi$
 of $Q(X,\gamma^+)$,
intersects  $(a^-,a'^-)$
and 
$(y,y')$,
isolating a root $\eta$
 of $Q(\alpha^-,Y)$, 
 intersects  $(c^+,c'^+)$.
 We omit similar definitions for the
three
corners  $(\alpha^-,\gamma^-)$,
 $(\alpha^+,\gamma^-)$,  $(\alpha^+,\gamma^+) $.
 
 Similarly the midpoint $(\alpha,\gamma^+)$ is
 {\bf ambiguous}
   if there are intervals 
$(x,x')$ and  $(y,y')$ output by Theorem \ref{sing-fibers}
such that 
$(x,x')$,
isolating a root $\xi$,
 of $Q(X,\gamma^+)$
 contains 
$\alpha$ and 
$(y,y')$,
isolating a root $\eta$
 of $Q(\alpha,Y)$, contains 
$\gamma^+$.
We omit a
similar definition for the
other midpoint  $(\alpha,\gamma^-)$.
\end{definition}

At the ambiguous corner $(\alpha^-,\gamma^+)$, it is not possible to know the cardinals of 
$H_{\gamma^+}^{<\alpha}\cap [x,x']$, $H_{\gamma^+}^{=\alpha}\cap [x,x']$  and $V_{\alpha^-}\cap [y,y']$ since we do not know whether $\xi \in H_{\gamma^+}^{<\alpha}$ (resp.  $\eta \in V_{\alpha^-}$).

However we note that
\begin{itemize} 
\item If $\sigma^+>0$ 
(with $\partial_XQ(\alpha^-,\gamma^+)>0$ and $\partial_YQ(\alpha^-,\gamma^+)<0$) then $Q(x,\gamma^+)<0$ and $Q(x',\gamma^+)>0$ while 
$Q(\alpha^-,y)>0$ and $Q(\alpha^-,y')<0$.
\begin{itemize}
\item If $Q(\alpha^-,\gamma^+)>0$, then  $\xi<\alpha^-$ and $\eta>\gamma^+$. 
 So that $\#H_{\gamma^+}^{<\alpha}\cap [x,x']=\#V_{\alpha^-}\cap [y,y']=0
$
\item If $Q(\alpha^-,\gamma^+)<0$, then   $\xi>\alpha^-$ and $\eta<\gamma^+$. 
 So that $\#H_{\gamma^+}^{<\alpha}\cap [x,x']=\#V_{\alpha^-}\cap [y,y']=1,
$
\item If $Q(\alpha^-,\gamma^+)=0$,  then  $\xi=\alpha^-$ and $\eta=\gamma^+$. 
 So that $\#H_{\gamma^+}^{<\alpha}\cap [x,x']=\#V_{\alpha^-}\cap [y,y']=0
$
\end{itemize}

\begin{figure}[!h]
    \centering
    \begin{subfigure}[b]{0.3\textwidth}
        \includegraphics[width=\textwidth]{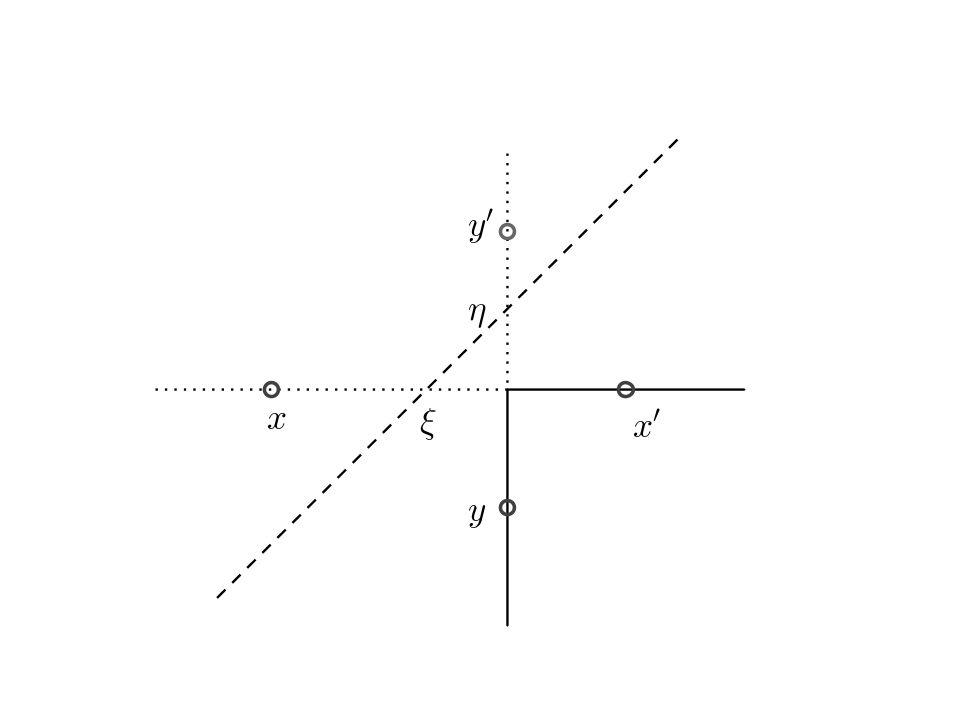}
    \end{subfigure}
    \begin{subfigure}[b]{0.3\textwidth}
        \includegraphics[width=\textwidth]{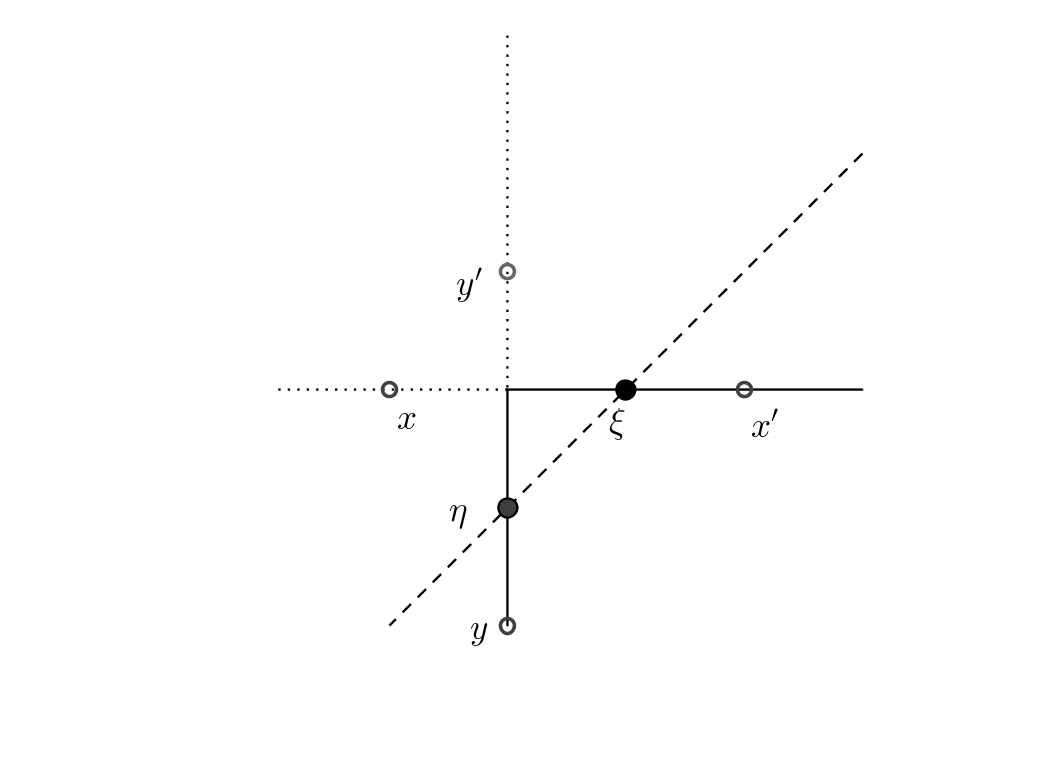}
    \end{subfigure}
    \begin{subfigure}[b]{0.3\textwidth}
        \includegraphics[width=\textwidth]{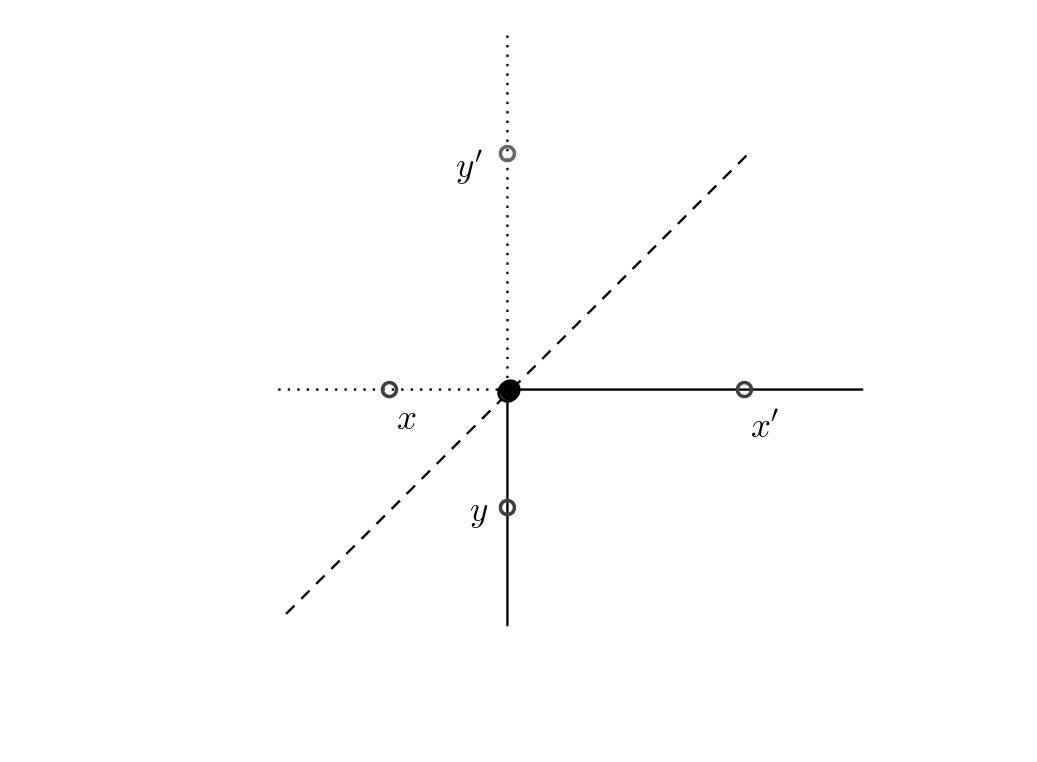}
    \end{subfigure}
    \caption{The three possibilities for an ambiguous corner with $\sigma^+ >0$}
\end{figure}

 In all cases, when  $\sigma^+>0$ (with $\partial_XQ(\alpha^-,\gamma^+)>0$ and $\partial_YQ(\alpha^-,\gamma^+)<0$)  
 $$\#H_{\gamma^+}^{<\alpha}\cap [x,x']-\#V_{\alpha^-}\cap [y,y']=0.$$
\item If $\sigma^+<0$ (with $\partial_XQ(\alpha^-,\gamma^+)>0$ and $\partial_YQ(\alpha^-,\gamma^+)>0$), then $Q(x,\gamma^+)<0$ and $Q(x',\gamma^+)>0$ while 
$Q(\alpha^-,y)<0$ and $Q(\alpha^-,y')>0$.
\begin{itemize}
\item If $Q(\alpha^-,\gamma^+)>0$, then $\xi<\gamma^-$ and $\eta<\alpha^+$.
   So that 
$\#H_{\gamma^+}^{<\alpha}\cap [x,x']=\#H_{\gamma^+}^{=\alpha^-}\cap [x,x']=0,\#V_{\alpha^-}\cap [y,y']=1
$
\item If $Q(\alpha^-,\gamma^+)<0$,
then   $\xi>\gamma^-$ and  $\eta>\alpha^+$. 
 So that 
$\#H_{\gamma^+}^{<\alpha}\cap [x,x']=1,\#V_{\alpha^-}\cap [y,y']=\#H_{\gamma^+}^{=\alpha^-}\cap [x,x']=0
$
\item If $Q(\alpha^-,\gamma^+)=0$ then  $\xi=\alpha^-$ and $\eta=\gamma^+$.
   So that
$\#H_{\gamma^+}^{<\alpha}\cap [x,x']=\#V_{\alpha^-}\cap [y,y']=0,\#H_{\gamma^+}^{=\alpha^-}\cap [x,x']=1
$
\end{itemize}

\begin{figure}[!h]
    \centering
    \begin{subfigure}[b]{0.3\textwidth}
        \includegraphics[width=\textwidth]{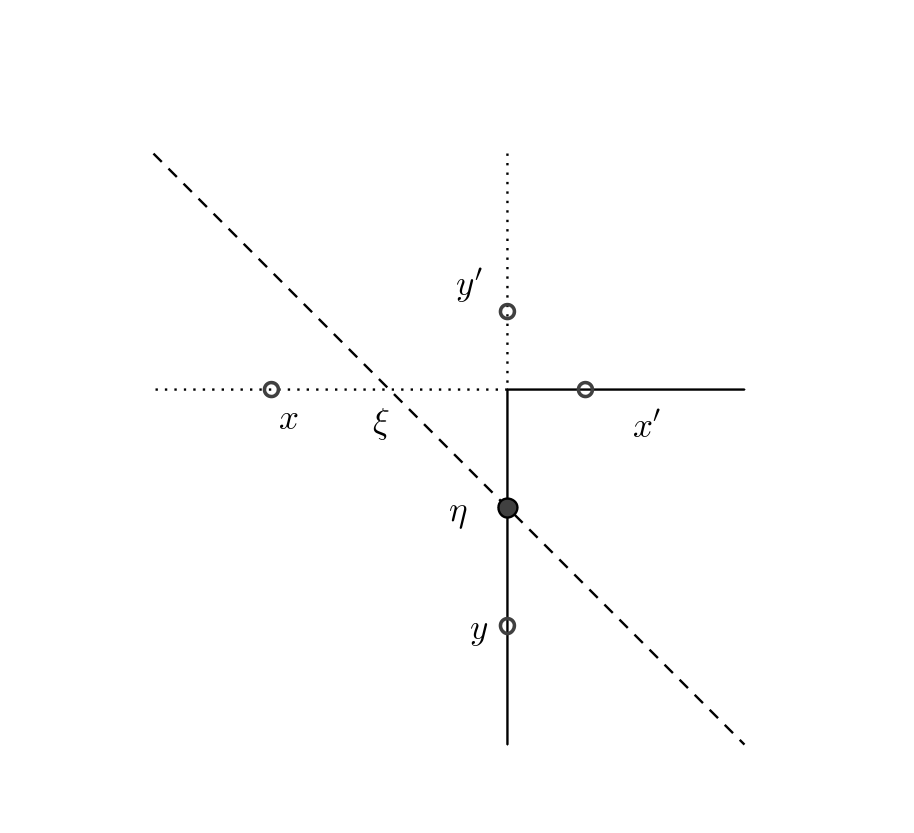}
    \end{subfigure}
    \begin{subfigure}[b]{0.3\textwidth}
        \includegraphics[width=\textwidth]{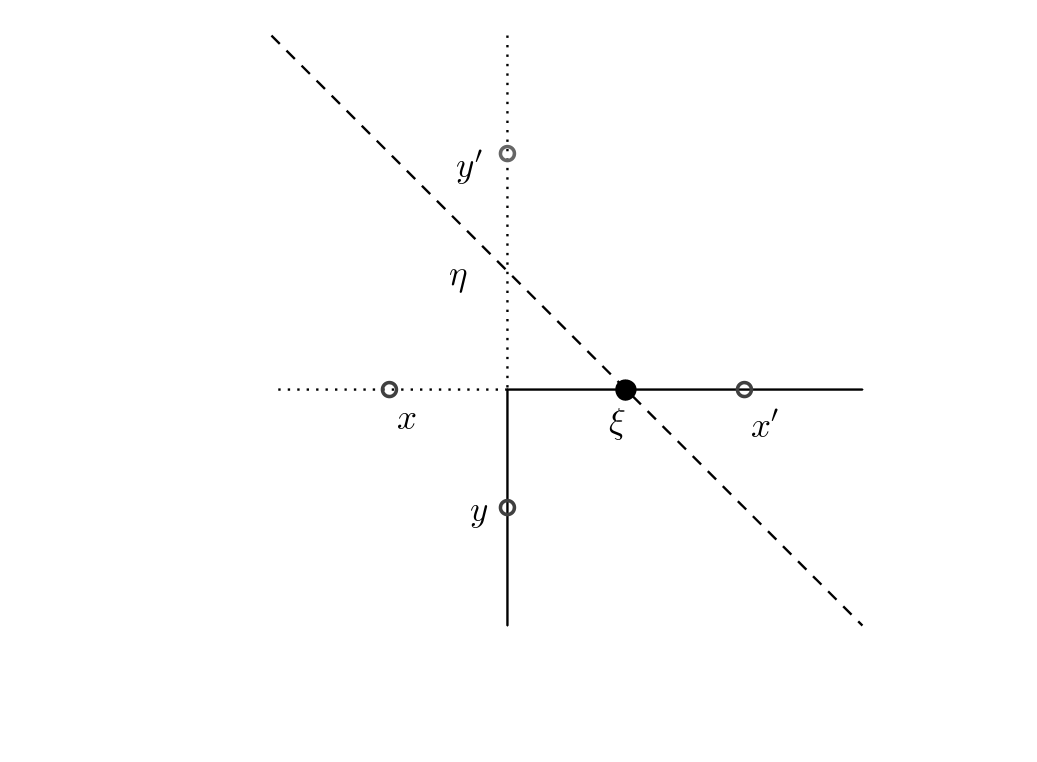}
    \end{subfigure}
    \begin{subfigure}[b]{0.3\textwidth}
        \includegraphics[width=\textwidth]{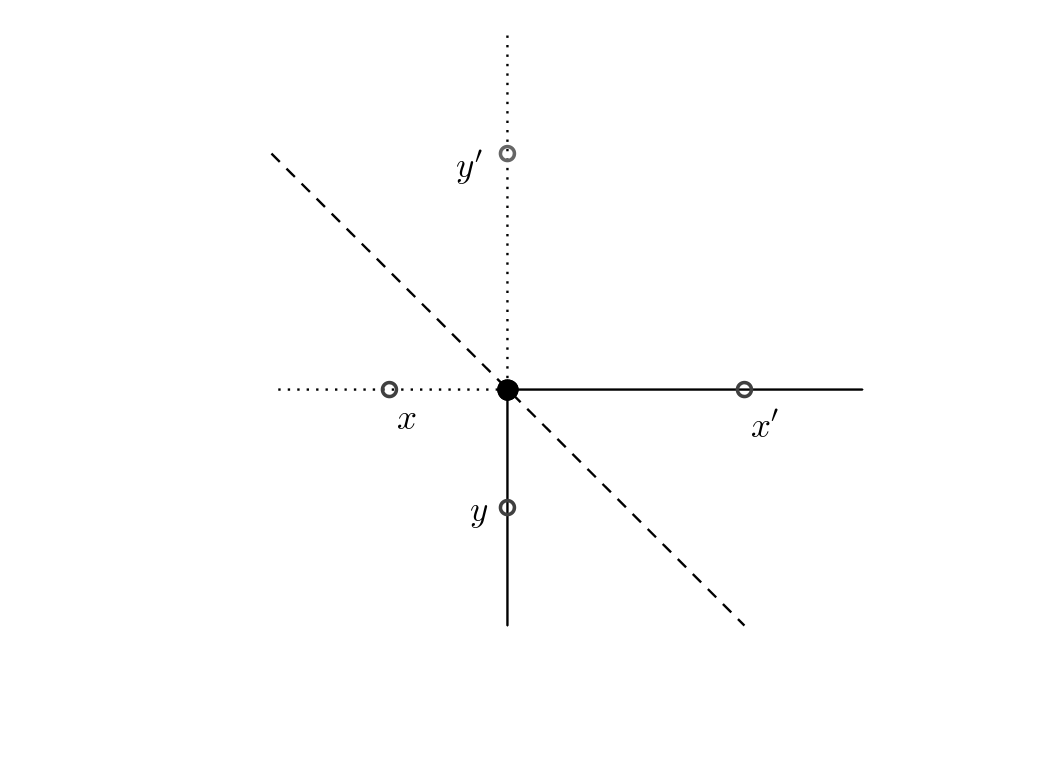}
    \end{subfigure}
    \caption{The three possibilities for an ambiguous corner with $\sigma^+<0$}
\end{figure}

In all cases, when  $\sigma^+<0$ (with $\partial_XQ(\alpha^-,\gamma^+)>0$ and $\partial_YQ(\alpha^-,\gamma^+)>0$)   $$\#H_{\gamma^+}^{<\alpha}\cap [x,x']+\#V_{\alpha^-}\cap [y,y']+\#V_{\alpha^-}\cap [y,y']=1.$$

\item Details for the remaining cases $\sigma^+>0$ with  $\partial_XQ(\alpha^-,\gamma^+)<0$ and $\partial_YQ(\alpha^-,\gamma^+)>0$ (resp. $\sigma^+<0$ with $\partial_XQ(\alpha^-,\gamma^+)<0$ and $\partial_YQ(\alpha^-,\gamma^+)<0$) are entirely similar, and omitted.
\end{itemize}

Summarizing the situation in  all the cases, the conclusion is as follows
\begin{itemize}
\item If $\sigma^+>0$ then
$$\#V_{\alpha^-}\cap [y,y']-\#H_{\gamma^+}^{<\alpha}\cap [x,x']=0
$$
\item If $\sigma^+<0$ then
$$\#V_{\alpha^-}\cap [y,y']+\#H_{\gamma^+}^{<\alpha}\cap [x,x']+\#H_{\gamma^+}^{=\alpha}\cap [x,x']=1$$

\end{itemize}
so that the sign of $Q(\alpha^-,\gamma^+)$ has no influence on $\textsc{Left}$
given the formulas of Algorithm \ref{algoconnect}.

\medskip

We now analyze the situation at an ambiguous midpoint $(\alpha,\gamma^+)$.
\begin{itemize} 
\item If $\sigma^+>0$ with $\partial_XQ(\alpha,\gamma^+)>0$ and $\partial_YQ(\alpha,\gamma^+)<0$, then $Q(x,\gamma^+)<0$ and $Q(x',\gamma^+)>0$ while 
$Q(\alpha,y)>0$ and $Q(\alpha,y')<0$.
\begin{itemize}
\item If $Q(\alpha,\gamma^+)>0$, then   $\xi<\alpha$ and  $\eta>\gamma^+$. So that 
$$\#H_{\gamma^+}^{<\alpha}\cap [x,x']=1,\#H_{\gamma^+}^{=\alpha}\cap [x,x']=\#V_\alpha^{>\beta}\cap [y,y']=0
$$
\item If $Q(\alpha,\gamma^+)<0$, then  $\xi>\alpha$ and $\eta<\gamma^+$. So that 
$$\#H_{\gamma^+}^{<\alpha}\cap [x,x']=\#H_{\gamma^+}^{=\alpha}\cap [x,x']=0,\#V_\alpha^{>\beta}\cap [y,y']=1
$$
\item If $Q(\alpha,\gamma^+)=0$,  then $\xi=\alpha$ and $\eta=\gamma^+$. So that
$$\#H_{\gamma^+}^{<\alpha}\cap [x,x']=0, \#H_{\gamma^+}^{=\alpha}\cap [x,x']=1,\#V_{\alpha}^{>\beta}\cap [y,y']=0
$$
\end{itemize}
In all cases, if $\sigma^+>0$ 
(with $\partial_XQ(\alpha^-,\gamma^+)>0$ and $\partial_YQ(\alpha^-,\gamma^+)<0$) then 
$$-\#H_{\gamma^+}^{<\alpha}\cap [x,x']-\#H_{\gamma^+}^{=\alpha}\cap [x,x']-\#V_{\alpha}^{>\beta}\cap [y,y']=-1.$$
\item If $\sigma^+<0$ with  $\partial_XQ(\alpha,\gamma^+)>0$ and $\partial_YQ(\alpha,\gamma^+)>0$, then $Q(x,\gamma^+)<0$ and $Q(x',\gamma^+)>0$ while 
$Q(\alpha,y)<0$ and $Q(\alpha,y')>0$.
\begin{itemize}
\item If $Q(\alpha,\gamma^+)>0$, then  the root $\xi$ is to the left of $\alpha$ and the root $\eta$ is under $\gamma^+$. So that 
$$\#H_{\gamma^+}^{<\alpha}\cap [x,x']=1,\#V_\alpha^{>\beta}\cap [y,y']=1
$$
\item If $Q(\alpha,\gamma^+)<0$,
then  the root $\xi$ is to the right of $\alpha$ and the root $\eta$ is above $\gamma^+$. So that 
$$\#H_{\gamma^+}^{<\alpha}\cap [x,x']=0,\#V_\alpha^{>\beta}\cap [y,y']=0
$$
\item If $Q(\alpha,\gamma^+)=0$ then $\xi=\alpha$ and $\eta=\gamma^+$. So that
$$\#H_{\gamma^+}^{< \alpha}\cap [x,x']=\#V_{\alpha}^{>\beta}\cap [y,y']=0
$$
\end{itemize}
In all cases,  when $\sigma^+<0$ 
 (with $\partial_XQ(\alpha^-,\gamma^+)>0$ and $\partial_YQ(\alpha^-,\gamma^+)>0$),
$$\#H_{\gamma^+}^{<\alpha}\cap [x,x']-\#V_{\alpha}^{>\beta}\cap [y,y']=0.$$
\item Details for the remaining cases $\sigma^+>0$ with $\partial_XQ(\alpha,\gamma^+)<0$ and $\partial_YQ(\alpha,\gamma^+)>0$ (resp. $\sigma^+<0$ with $\partial_XQ(\alpha,\gamma^+)<0$ and $\partial_YQ(\alpha,\gamma^+)<0$) are omitted.
\end{itemize}

Summarizing the situation in all the cases, the conclusion is as follows
\begin{itemize}
\item If $\sigma^+>0$ then
$$-\#H_{\gamma^+}^{<\alpha}\cap [x,x']-\#H_{\gamma^+}^{=\alpha}\cap [x,x']-\#V_{\alpha}^{>\beta}\cap [y,y']=-1
$$
\item If $\sigma^+<0$ then
$$\#H_{\gamma^+}^{<\alpha}\cap [x,x']-\#V_{\alpha}^{>\beta}\cap [y,y']=0
$$
\end{itemize}
so that the sign of $Q(\alpha,\gamma^+)$ has no influence on $\textsc{Left}$ 
given the formulas of Algorithm \ref{algoconnect}).

The analysis for the other ambiguous corners and midpoints is similar.

So we can conclude

\begin{proposition}\label{ambigousnoproblem}
The sign of  $Q$ at an ambiguous corner or midpoint has no influence on $\textsc{Left}$ and $\textsc{Right}$.
\end{proposition}

So we can decide arbitrarily that $Q$ is zero at all ambiguous corners and midpoints. In order to obtain the quantities
$$
\#V_{\alpha^-},\#H_{\gamma^-}^{= \alpha^-}, \#H_{\gamma^-}^{ < \alpha}, \#H_{\gamma^-}^{= \alpha}, \#H_{\gamma^+}^{= \alpha^-}, \#H_{\gamma^+}^{ < \alpha}, \#H_{\gamma^+}^{= \alpha}, \#V_{\alpha}^{>\beta}, \#V_{\alpha}^{<\beta}
$$
we now use the output of Theorem \ref{sing-fibers} and the extra information  that $Q$ is zero at all ambiguous corners and midpoints.
Finally, we use these quantities to compute correctly $\textsc{Left}$ 
according to the formulas of Algorithm \ref{algoconnect}. The situation is similar for $\textsc{Right}$.

All in all, we proved, using  Theorem \ref{sing-fibers} and Corollary \ref{cor:adjacency}:

\begin{proposition}\label{leftright}
Using $\tilde O(d^5\tau+d^6)$ bit-operations, we can compute $\textsc{Left}_{i,j}$ 
and  $\textsc{Right}_{i,j}$ for every $i,j$, $i=1,\ldots,N$, $j\in \textsc{CritInd}_i$.
\end{proposition}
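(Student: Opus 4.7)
The plan is to use Theorem \ref{sing-fibers} as the main engine and then, for each critical point $(\alpha_i,\beta_{i,j})$ with $j\in\textsc{CritInd}_i$, to read off the nine cardinals and four slope signs required by Algorithm \ref{algoconnect} and to apply formulas (\ref{totheleft1})--(\ref{totheright4}). Theorem \ref{sing-fibers} already costs $\tO(d^5\tau+d^6)$ bit operations and yields, on each of the five boundary fibers $X=\alpha^-_i,\alpha_i,\alpha^+_i$ and $Y=\gamma^\pm_{k(i,j)}$ of the adjacency box, well-isolating intervals for every real root of the product $\tilde P\cdot\partial_X\tilde P\cdot\partial_Y\tilde P$. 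From this data, for every root of $\tilde P$ on these fibers one reads off the signs of $\partial_X\tilde P$ and $\partial_Y\tilde P$, and hence the slope sign of that point; by Proposition \ref{forcorrectness} any single representative of the relevant list fixes $\sigma^\pm$ and $\tau^\pm$.

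The nine cardinals are then extracted by direct interval comparison: for instance $\#L_{\gamma^+}^{<\alpha}$ is the number of isolating intervals $J^+_{k(i,j),\ell}$ lying strictly left of $I_i$, and $\#L_{\alpha}^{>\beta}$ is the number of intervals $I_{i,j'}$ lying above $I_{i,j}$. Parts (c.2)--(c.3) of Theorem \ref{sing-fibers} guarantee that no isolating interval involved simultaneously contains two of $\alpha^-_i,\alpha_i,\alpha^+_i$ (resp.\ two of $\gamma^-_{k(i,j)},\gamma^+_{k(i,j)}$), so the only residual indecision concerns the four corners and the two midpoints of the adjacency box, at which the sign of $\tilde P$ may remain unknown within our bit-complexity budget (cf.\ Remark \ref{amb}). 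By Proposition \ref{ambigousnoproblem}, conventionally declaring $\tilde P=0$ at every ambiguous corner and midpoint yields a consistent assignment of the nine cardinals that leaves $\textsc{Left}_{i,j}$ and $\textsc{Right}_{i,j}$ unchanged, so the outputs of the formulas of Algorithm \ref{algoconnect} are correct.

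For the complexity, I would charge each interval comparison linearly in the bitsizes of the two endpoints involved. The total cost is then controlled by the sums $\sum_i\lambda(I_i)$, $\sum_{i,j'}\lambda(I_{i,j'})$, $\sum_{k,\ell}\lambda(J^\pm_{k,\ell})$ and their $\pm$-decorated analogues, each bounded by $\tO(d^3\tau+d^4)$ by (\ref{eqboxa1})--(\ref{eqboxe2}). Each such sum enters with at most one factor $O(d)$ coming either from $|\textsc{CritInd}_i|\le d$ or from $|\textsc{CritPl}^k|\le d$, so the total post-processing cost is $\tO(d^4\tau+d^5)$, absorbed in the cost of Theorem \ref{sing-fibers}. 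The main obstacle is not this bookkeeping but the fact that the unavoidable ambiguities in the output of Theorem \ref{sing-fibers} do not pollute $\textsc{Left}_{i,j}$ and $\textsc{Right}_{i,j}$; this is precisely what Proposition \ref{ambigousnoproblem} provides, and it is the key reason we can afford to stop refining at the edge of our $\tO(d^5\tau+d^6)$ budget.
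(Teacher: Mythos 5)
Your proposal is correct and follows essentially the same route as the paper: it combines the output of Theorem \ref{sing-fibers} with the counting formulas of Algorithm \ref{algoconnect}, resolves the residual corner/midpoint indecision by invoking Proposition \ref{ambigousnoproblem} (arbitrarily setting $\tilde P=0$ there), and verifies that the extraction of the cardinals and slope signs stays within the $\tO(d^5\tau+d^6)$ budget. The only difference is that you make the bookkeeping of the interval comparisons slightly more explicit than the paper does, which is harmless.
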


\subsection{Vertical asymptotes}
\label{sec:asym}

We now explain how to deal with the vertical asymptotes.

The vertical asymptotes occur at values of $\alpha_i$ where $q(\alpha_i)=0$, i.e. $\deg(Q(\alpha_i,Y))<\deg_Y(Q)$.

Our aim is to compute numbers
$\textsc{Left}_{i,0}$ and $\textsc{Right}_{i,0}$ (resp $\textsc{Left}_{1,m_i+1}$ and $\textsc{Right}_{1,m_i+1}$) counting the number of asymptotes tending to $-\infty$ just before $\alpha_i$ and  just after $\alpha_i$ (resp. tending to $+\infty$ just before $\alpha_i$ and  just after $\alpha_i$ ).

There are two cases to consider.

As we have seen already in Section \ref{sec:basic}, when $\deg(Q(\alpha_i,Y))=\deg_Y(Q)-1$,
\begin{itemize}
\item if the degree of the coefficient of degree $\deg_Y(Q)-1$ evaluated at $\alpha_i$ i is positive, there is only one asymptote going to $-\infty$ just before $\alpha_i$ and to $+\infty$ just  after $\alpha_i$ and we define
$$\textsc{Left}_{i,0}=1,\textsc{Right}_{i,0}=0, \textsc{Left}_{1,m_i+1}=0,=\textsc{Right}_{1,m_i+1}=1.$$
\item if the degree of the coefficient of degree $\deg_Y(Q)-1$ evaluated at $\alpha_i$ is negative
there is only one asymptote going to  $+\infty$ before $\alpha_i$ and  $-\infty$ after $\alpha_i$ and we define
$$\textsc{Left}_{i,0}=0,\textsc{Right}_{i,0}=1, \textsc{Left}_{1,m_i+1}=1,=\textsc{Right}_{1,m_i+1}=0.$$
\end{itemize}

When $\deg(Q(\alpha_i,Y))<\deg_Y(Q)-1$, $\alpha_i$ is also a root of $D_X$. The indices $i = 1,\ldots,N$ such that $\deg(Q(\alpha_i,Y))<\deg_Y(Q)-1$ are part of the output of Theorem \ref{sing-fibers}.

For vertical asymptotes at $-\infty$, we isolate the roots of $Q(X,\gamma^-_1)=0$ and decide the sign slope at the roots of
$Q(X,\gamma^-_1)=0$. This can be done in $\tO(d^4\tau+d^5)$ bit-operations since 
$Q(X,\gamma^-_1)$ is a polynomial of degree at most $d$ and bit size  $O(d^2 \tau+d^3)$.
We also compare the roots of $Q(X,\gamma^-_1)$ with the roots of $D_X$. Note first that $Q(X,\gamma^-_1)$ and $D_X$  have no roots in common. Since the separators of $Q(X,\gamma^-_1)$ and $D_X$ are both bounded by $2^{\tilde O(d^3 \tau+d^4)}$ this can be done by Proposition
\ref{sagraloff-isolation-integer} with complexity  $\tO(d^5\tau+d^6)$.

On each open interval $(\alpha_i,\alpha_{i+1})$, $i=1,\ldots,N$ delimited by roots of $D_X$, let $\textsc{Right}_{i,0}$ the number of roots of $Q(X,\gamma^-_1)=0$  such that the slope sign is $>0$ and   $\textsc{Left}_{i+1,0}$ the number of roots of $Q(X,\gamma^-_1)=0$  such that the  slope sign is $<0$.
We also denote by  let $\textsc{Right}_{N,0}$ the number of roots of $Q(X,\gamma^-_1)=0$  such that the slope sign is $>0$ on $(\alpha_N,+\infty)$ and   $\textsc{Left}_{1,0}$ the number of roots of $Q(X,\gamma^+_M)=0$  such that the  slope sign is $<0$  on $(-\infty, \alpha_1)$.

Note that all the roots $Q(X,\gamma^-_1)=0$ on $(\alpha_i,\alpha_{i+1})$ with positive  slope sign are bigger than all the roots $Q(X,\gamma^-_1)=0$ on $H_i$ with negative slope sign. Note also that if $\alpha_i$ is such that $\deg(Q(\alpha_i,Y))=\deg_Y(Q)$, 
$\textsc{Left}_{i,0}=\textsc{Right}_{i,0}=0$.

The situation at $+\infty$ is entirely similar and we define 
 $\textsc{Right}_{i,m_i+1}$ the number of roots of $Q(X,\gamma^+_M)=0$ on $(\alpha_i,\alpha_{i+1})$ such that the slope sign is $>0$ and   $\textsc{Left}_{i+1,m_{i+1}+1}$ the number of roots of $Q(X,\gamma^+_M)=0$ on $(\alpha_i,\alpha_{i+1})$ such that the slope sign is $<0$.
 We also denote by  let $\textsc{Right}_{N,m_N+1}$ the number of roots of $Q(X,\gamma^+_M)=0$  such that the slope sign is $<0$ on $(\alpha_N,+\infty)$ and   $\textsc{Left}_{1,m_1+1}$ the number of roots of $Q(X,\gamma^+_M)=0$  such that the  slope sign is $>0$  on $(-\infty, \alpha_1)$.
 
 Finally we have

\begin{proposition}\label{asympt}
The number of asymptotic branches tending to $-\infty$ (resp $+\infty$) to the left of $\alpha_i$ is $\textsc{Left}_{i,0}$
(resp. $\textsc{Left}_{i,m_i+1}$) and the number of asymptotic branches tending to $-\infty$ (resp $+\infty$) to the right of  $\alpha_i$  is $\textsc{Right}_{i,0}$ (resp. $\textsc{Right}_{i,m_i+1}$).Moreover the complexity of computing these numbers is  $\tO(d^5\tau+d^6)$.
\end{proposition}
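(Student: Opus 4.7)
The plan is to combine a geometric correctness argument with the amortized-separator root-isolation machinery developed earlier. For correctness, I would argue as follows. Since $\gamma_1^-$ is chosen strictly below $\gamma_1$, the least real root of $S_Y$, and $S_Y$ vanishes at the $Y$-coordinate of every $X$- and $Y$-critical point of $V_{\mathbb{R}}(\tilde P)$ as well as at every horizontal degree-drop value, the open strip $\mathbb{R}\times(-\infty,\gamma_1^-)$ contains no critical points of $V_{\mathbb{R}}(\tilde P)$. In this strip, neither $\partial_X \tilde P$ nor $\partial_Y\tilde P$ vanishes on the curve, so each connected component of $V_{\mathbb{R}}(\tilde P)$ restricted to the strip is a smooth graph with locally constant slope sign. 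Restricting further to the cell $(\alpha_i,\alpha_{i+1})\times(-\infty,\gamma_1^-)$ and arguing as in the proof of Proposition~\ref{forcorrectness}, a component cannot terminate at a finite interior point of a lateral side (such a limit point would be a regular point of the curve and the component would extend across $X=\alpha_i$, contradicting confinement to the cell). Hence a component that reaches the top boundary must descend monotonically to $y=-\infty$, with $x\to\alpha_i^+$ when the slope is positive (then $x$ and $y$ decrease together) and $x\to\alpha_{i+1}^-$ when negative. Conversely, every vertical asymptote of $V_{\mathbb{R}}(\tilde P)$ going to $-\infty$ eventually enters this strip, is monotone inside it, and therefore crosses $Y=\gamma_1^-$ exactly once. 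This bijection underlies the formulas for $\textsc{Right}_{i,0}$ and $\textsc{Left}_{i+1,0}$; the treatment at $+\infty$ with $\gamma_M^+$ in place of $\gamma_1^-$ is symmetric.

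For the complexity, one can take $\gamma_1^-$ to be any rational strictly below $\gamma_1$, in particular $\eta_0=-C(T_Y)$ of bitsize $\tO(d\tau+d^2)$. Proposition~\ref{bivariate-evaluation-rational} then produces $\tilde P(X,\gamma_1^-)$, $\partial_X\tilde P(X,\gamma_1^-)$ and $\partial_Y\tilde P(X,\gamma_1^-)$ in $\Z[X]$, each of magnitude $(d,O(d^2\tau+d^3))$, in $\tO(d^3\tau+d^4)$ bit operations. Proposition~\ref{comparingroots} applied to this triple then isolates all real roots of $\tilde P(X,\gamma_1^-)$ and returns the slope sign at each of them in $\tO(d^4\tau+d^5)$ bit operations. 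It remains to locate each isolated root in one of the open intervals $(\alpha_i,\alpha_{i+1})$ delimited by the real roots of $D_X$. The polynomials $\tilde P(X,\gamma_1^-)$ and $D_X$ have no common real root (such a root would be an $X$-critical point of $Y$-coordinate $\gamma_1^-<\gamma_1$, impossible), so Proposition~\ref{sagraloff-isolation-integer} applied to their product (of magnitude $(O(d^2),O(d^2\tau+d^3))$) carries out the interleaving in $\tO(d^5\tau+d^6)$ bit operations. The analogous work at $\gamma_M^+$ doubles the cost at worst, yielding the target $\tO(d^5\tau+d^6)$.

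The main obstacle is this last interleaving step: the separators of $\tilde P(X,\gamma_1^-)\cdot D_X$ can be as small as $2^{-\tO(d^3\tau+d^4)}$, so a naive scheme would demand extremely high-precision comparisons at every root individually. The linchpin is that $\tilde P(X,\gamma_1^-)$ and $D_X$ share no real root, thanks to the strict inequality $\gamma_1^-<\gamma_1$; this lets the amortized precision bound in Proposition~\ref{sagraloff-isolation-integer} absorb all root refinements within the target complexity.
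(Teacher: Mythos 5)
Your overall strategy — catch each downward asymptotic branch by its crossing with the horizontal line $Y=\gamma_1^-$, read off the side of the asymptote from the slope sign, and localize the crossing between consecutive roots of $D_X$ — is the same as the paper's, which states the proposition as a summary of the preceding construction and offers no separate correctness proof. But the one step where you do try to supply the missing geometric justification is exactly where the difficulty sits, and your justification is invalid. You assert that a connected component of $V_{\mathbb{R}}(\tilde P)\cap\bigl((\alpha_i,\alpha_{i+1})\times(-\infty,\gamma_1^-)\bigr)$ cannot terminate at a finite point of a lateral side because ``the component would extend across $X=\alpha_i$, contradicting confinement to the cell.'' There is no contradiction: the curve does extend across the lateral side, while the component of the restriction to the cell simply stops there. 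Concretely, a branch $\varphi$ over $(\alpha_i,\alpha_{i+1})$ with $\varphi(x)\to-\infty$ as $x\to\alpha_i^+$ is strictly monotone wherever it stays below $\gamma_1$, but nothing prevents it from converging, as $x\to\alpha_{i+1}^-$, to a finite value $\ell\le\gamma_1^-$: the limit point $(\alpha_{i+1},\ell)$ is then a simple, non-critical real root of the fiber $\tilde P(\alpha_{i+1},-)$, and such ordinates are not roots of $S_Y$, hence not bounded below by $\gamma_1^-$ (which is only guaranteed to lie below the real roots of $S_Y$ and $(S_Y^\star)'$). In that case the branch never meets $Y=\gamma_1^-$ inside $(\alpha_i,\alpha_{i+1})$; it meets it in a strip further away, where it is then miscounted. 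For instance, for $\tilde P=(XY-1)\bigl((X+\eps)^2+Y^2-r^2\bigr)$ with a tiny circle placed near the asymptote $X=0$, the fiber over the critical value $-\eps+r$ contains the regular point $\bigl(-\eps+r,\,-1/(\eps-r)\bigr)$, whose ordinate can lie far below $-C(T_Y)$. Closing the gap requires knowing that the chosen horizontal line lies below \emph{every} point of the curve over \emph{every} real root of $D_X$ — an additional statement that does not follow from the definition of $\gamma_1^-$ and that neither you nor the paper establishes.

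On the complexity side your route matches the paper's except for the final interleaving step: applying Proposition~\ref{sagraloff-isolation-integer} to the product $\tilde P(X,\gamma_1^-)\cdot D_X$ does not stay within budget, because that product has degree $O(d^2)$ \emph{and} bitsize $O(d^2\tau+d^3)$ simultaneously, so the bound $\tO(n^2\tau+n^3)$ becomes $\tO(d^6\tau+d^7)$. One must instead isolate the two coprime factors separately (degree $d$ with bitsize $\tO(d^2\tau+d^3)$, and degree $O(d^2)$ with bitsize $\tO(d\tau+d^2)$, each within $\tO(d^5\tau+d^6)$) and then interleave their roots by adaptive refinement in the style of Proposition~\ref{comparingroots} or Proposition~\ref{thm:comparinginfibers}.
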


  \subsection{Final topology}\label{topology}

 For $(\alpha_i,\beta_{i,j})$ $X$-critical, i.e. $j\in \textsc{CritInd}_i$, we denote as before  by $\textsc{Left}_{i,j}$ (resp. $\textsc{Right}_{i,j}$) the number of 
curve segments 
 arriving at $(\alpha_i,\beta_{i,j})$ inside
$\bbox(\alpha,\beta)$ .
  Note that this information has been computed in Proposition \ref{leftright} using $\tO(d^5\tau+d^6)$ bit operations.
 For $j\notin \textsc{CritInd}_i$, we take $\textsc{Left}_{i,j}=\textsc{Right}_{i,j}=1$. 
  If $j \textsc{CritInd}_i\setminus \textsc{SingInd}_i$, denoting by $\mu_{i,j}$  the multiplicity of $\beta_{i,j}$ as a root of $Q(\alpha_i,Y)$ 
\begin{itemize}
\item if $\mu_{i,j}$  is even and $\partial_Y Q^{(\mu_{i,j})}(\alpha_i,\beta_{i,j}))>0$,  $\textsc{Left}_{i,j}=0,\textsc{Right}_{i,j}=2$
\item if $\mu_{i,j}$  is even and $\partial_Y  Q^{(\mu_{i,j})}(\alpha_i,\beta_{i,j}))<0$, $\textsc{Left}_{i,j}=2,\textsc{Right}_{i,j}=0$
\item if $\mu_{i,j}$  is odd, $\textsc{Left}_{i,j}=\textsc{Right}_{i,j}=1$.
\end{itemize}
  as discussed  in Section \ref{sec:basic}.
  
 We denote as before  by   $\textsc{Left}_{i,0}$  (resp.  $\textsc{Right}_{i,0}$) the number 
 of vertical asymptotes tending to $-\infty$ at the left (resp. the right) of $\alpha_i$ and by $\textsc{Left}_{i,m_i+1}$  (resp.  $\textsc{Right}_{i,m_i+1}$) the number  of vertical asymptotes tending to $+\infty$ at the left (resp. the right) of $\alpha_i$.  Note that this information has been already determined in Proposition \ref{asympt}.

The topology of $V_{\mathbb{R}}(Q)$ is encoded by the finite list $$\tilde{\mathcal{L}}(Q)=[m'_0,L_1,\ldots,L_N ,m'_N]$$ where
\begin{itemize}
\item[-] $L_i=[m_{i},[[\textsc{Left}_{i,j},\textsc{Right}_{i,j}], 0 \le j\le m_{i}+1]]$  for $i=1,\ldots,N$,
\end{itemize}

In the special case where  $\deg_X(Q(X,Y))=0$  and $V_{\mathbb{R}} (Q)$ is a finite number of horizontal lines , we  compute the number $m$ of real roots of $Q(X,Y)$ which is a polynomial in $Y$.
Similarly in the special case where $D_X(X)$ has no real root, we compute the number $m$ of real roots of $Q(0,Y)$. 

In both special cases, the topology of $V_{\mathbb{R}}(Q)$ is encoded by $\tilde{\mathcal{L}}(Q)=[m]$.

\begin{exemple}\label{exemple1}
Now we illustrate the previous result by taking an example, where 
$$P(X,Y)= (4X+1)(8X-1)(16X-1)(XY-1)(4Y^2-4X-1)(4Y^2+4X-1),$$
and
$$Q(X,Y)= (XY-1)(4Y^2-4X-1)(4Y^2+4X-1).$$
 We have 
$$D_X=-2^{28}\,X^4\,\left(4\,X-1\right)\,\left(1+4\,X\right)\,\left(4-
 X^2+4\,X^3\right)^2\,\left(-4+X^2+4\,X^3\right)^2.$$
 Since
$4-
 X^2+4\,X^3$ (resp. $-4+X^2+4\,X^3$) has one real root on $[-1,-1/2]$ (resp. on $[1/2,1]$), we have $N=5$.
 We obtain
$$\tilde{\mathcal{L}}(Q)=[3,L_1,3,L_2,5,L_3,5,L_4,3,L_5 ,3]$$ where
\begin{itemize}
\item[-] $L_1=[2,[[0,0],[2,2],[1,1],[0,0]]]$
\item[-] $L_2=[4,[[0,0],[1,1],[1,1],[0,2],[1,1],[0,0]]]$
\item[-] $L_3=[2,[[1,0],[2,2],[2,2],[0,1]]]$
\item[-] $L_4=[4,[[0,0],[1,1],[2,0],[1,1],[1,1],[0,0]]]$
\item[-] $L_5=[2,[[0,0],[1,1],[2,2],[0,0]]]$
\end{itemize}
For example the  list $L_3$ means that above $\alpha_3=0$ there is
one asymptote going to $-\infty$ at the left of $\alpha_3$, then two singular points with two branches to the left and two branches to the right and one asymptote going to $+\infty$ at the right of $\alpha_3$.
\begin{figure}[!htp]
      \hfill\hbox to 0pt{\hss\includegraphics[width=8cm,height=8cm]{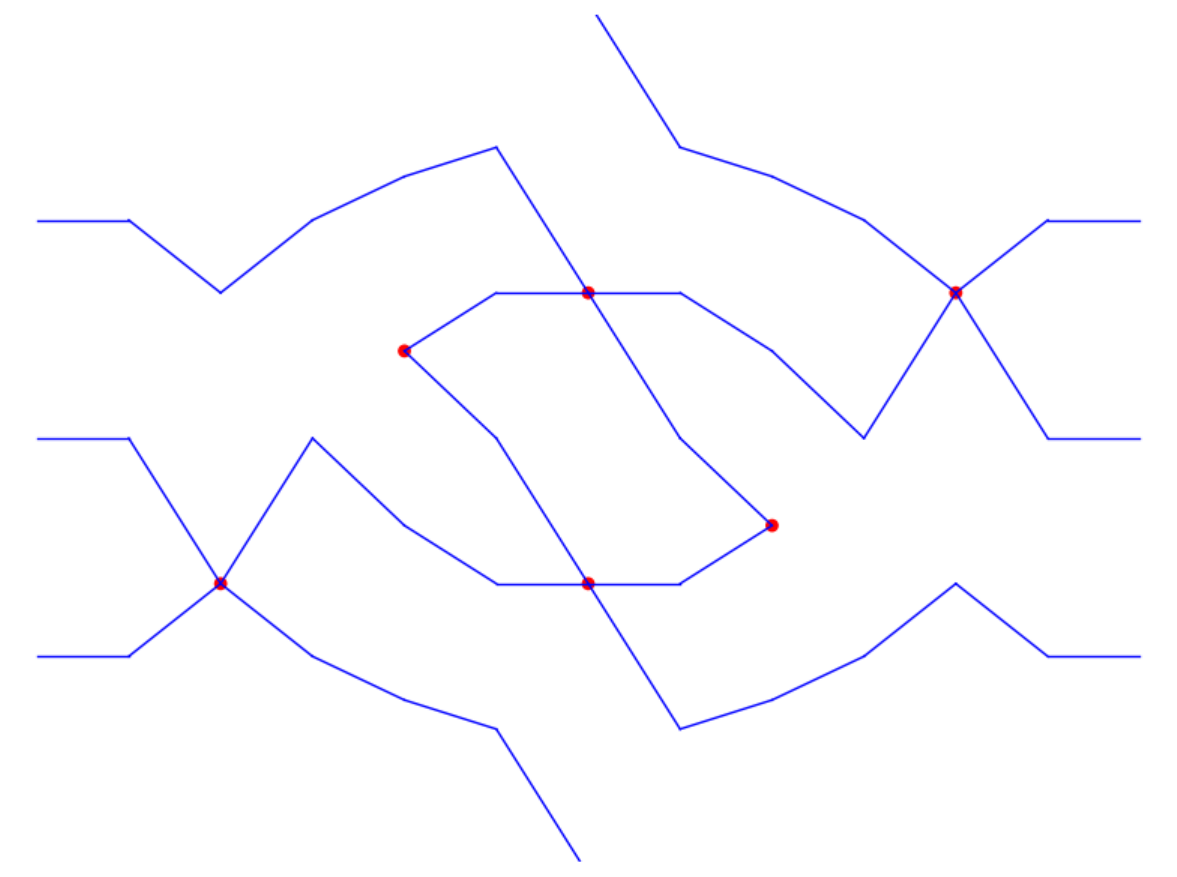}\hss}\hfill\null
\caption{\label{description_graph}Simple planar graph isotopic to the curve in  Example \ref{exemple1} without its vertical lines }
\end{figure}
\end{exemple}

A 
simple planar graph 
$\mathrm{Gr}(Q)$
 can be obtained as follows, 
\begin{itemize}
\item define $d'=\max(\max_{i=0,\ldots, N} m'_i,\max_{i=1,\ldots,N} m_i)$,
\item  for each $i=0,\ldots, N$ include the points $$I_{i,j}=\left(2i+1,\frac{j(d'+1)}{m'_i+1}\right)$$ for $j$ from $1$ to $m'_{i}$,
\item for each   $i=1,\ldots, N$ include the points $$P_{i,j}=\left(2i,\frac{j(d'+1)}{m_i+1}\right)$$ for $j$ from $0$ to $m_{j}+1$,
\item include the points $$P_{-\infty,j}=\left(0,\frac{j(d'+1)}{m'_0+1}\right)$$ for $j$ from $1$ to $m'_{0}$ as well as the points  $$P_{+\infty,j}=\left(2(N+1),\frac{j(d'+1)}{m'_N+1}\right)$$ for $j$ from $1$ to $m'_{N}$,
\item for $i=1,\ldots,N$, add the segment $I_{i-1,\ell}P_{i,j}$ (resp. $P_{i,j}I_{i,r}$) if $$\sum_{k=0}^{j-1} \textsc{Left}_{i,k}<\ell \le \sum_{k=0}^{j}\textsc{Left}_{i,k} (\mbox{resp.} \sum_{k=0}^{j-1} \textsc{Right}_{i,k}<r \le \sum_{k=0}^{j} \textsc{Right}_{i,k})$$
\item add the segments $P_{-\infty,j}I_{0,j}$ for $j=1,\ldots, m'_0$ and the segments
$I_{N,j} P_{\infty,j}$ for $j=1,\ldots, m'_N$
\end{itemize}
    
It is clear that
\begin{proposition}
$\mathrm{Gr}(Q)
\subset\mathbb (0,2 N)\times (0,d'+1)$
is homemorphic to $V_{\mathbb{R}}(Q)\subset\mathbb{R}^2$.
\end{proposition}

We finally explain how to add vertical lines back.
We remind that $c(X)$ is the gcd of all the coefficients $c_i(X)$ of $P(X,Y)$ written as an element of $\Z[X][Y]$
and we consider the  square-free part $c^\star(X)$ of $c(X)$.
 Define:
\begin{itemize}
\item $c_1(X):=$ gcd $(c^\star(X),D_X(X))$ and $c_2(X):= $quo$(c^\star(X),c_1(X))$, 
\item $\mathcal{V}_{1} := \{(x,y)\in \mathbb{R}^2 | c_{1}(x)=0\}$ and $\mathcal{V}_{2} := \{(x,y)\in \mathbb{R}^2 | c_{2}(x)=0\}$. 
\end {itemize}
Hence $\mathcal{V}_{1}$ is the subset of vertical lines of $V_{\mathbb{R}} ({P})$ passing through
zeroes of $D_XX$ 
while $\mathcal{V}_{2}$ is the subset of those passing between 
roots of $D_X$.
The computation of $c_{1}(X)$ and $c_{2}(X)$ has respectively bit complexities of $\tO(d^4 \tau + d^5)$ and $\tO(d\tau + d^2)$ according to Proposition \ref{gcd-comp} and  Proposition \ref{exact_division_comp}.
To add back the lines in $\mathcal{V}_{1}$ to $V_{\mathbb{R}} (Q)$, it suffices to identify the real roots of $c_{1}(X)$ as roots of $D_X(X)$, i.e. to decide whether the vertical line defined by $X=\alpha_i$ belongs to $V_{\mathbb{R}}(P)$. Such identification has bit complexity $\tO(d^5 \tau + d^6)$ according to Proposition \ref{comparingroots}. To add back the lines in $\mathcal{V}_{2}$ to $V_{\mathbb{R}} (Q)$, it suffices to 
order the roots of $c_2$ and the roots of $D_X$. This has bit complexity $\tO (d^5 \tau + d^6)$ according to Proposition \ref{comparingroots}.
\begin{proposition}
 Let $P \in \Z [X, Y]$ 
 a non-zero
 square-free polynomial of total degree $d$
 and integer coefficients of bitsize bounded by $\tau$. Adding back the vertical lines of $V_{\mathbb{R}} (P)$ to $V_{\mathbb{R}} (Q)$ has bit complexity $\tO
 (d^5 \tau + d^6)$. 
\end{proposition}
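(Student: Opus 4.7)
My plan is to reduce the task of adding the vertical lines to two applications of Proposition~\ref{comparingroots}, using polynomials whose magnitudes fit the required complexity envelope.

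First I would recall the decomposition $P(X,Y)=c(X)\cdot\tilde{P}(X,Y)$ provided by Proposition~\ref{removeverticallines}, so that $c(X)$ is already available and has magnitude $(d,\tau+d+\log(d+1))$. Taking the square-free part $c^\star(X)$ costs $\tO(d^2+d\tau)$ bit operations via Propositions~\ref{gcd-comp} and~\ref{exact_division_comp}, and the result still has magnitude $(d,\tO(d+\tau))$. Since $D_X$ has magnitude $(O(d^2),\tO(d\tau+d^2))$ by Lemma~\ref{magni}, Proposition~\ref{gcd-comp} yields $c_1(X)=\gcd(c^\star,D_X)$ in $\tO(d^2(d\cdot(d\tau+d^2)+d^2(d+\tau)))=\tO(d^4\tau+d^5)$ bit operations, and Proposition~\ref{exact_division_comp} then gives $c_2(X)=c^\star/c_1$ within the same bound. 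The magnitudes of $c_1$ and $c_2$ are both bounded by $(d,\tO(d+\tau))$.

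Next I would apply Proposition~\ref{comparingroots} to the family $(D_X,c_1,c_2)$, taking $N=O(d^2)$ and $\Lambda=\tO(d\tau+d^2)$. The two pieces of information I need are:
\begin{enumerate}
\item the identification of each real root of $c_1$ as one of the $\alpha_i$ (equivalently: the indices $i$ for which $c_1(\alpha_i)=0$), so that the lines of $\mathcal{V}_1$ are glued to the already-computed special fibers;
\item the sign of $c_2$ at every real root of $(T_X)^\star$, together with isolating intervals for the real roots of $c_2$; combined with the intervals $I_i^-,I_i,I_i^+$ from Theorem~\ref{sing-fibers}, this places each real root of $c_2$ in a unique regular interval $(\alpha_i,\alpha_{i+1})$ and records how many lines of $\mathcal{V}_2$ fall between two consecutive special fibers.
\end{enumerate}
By Proposition~\ref{comparingroots}, both tasks cost $\tO(N^2\Lambda+N^3)=\tO(d^5\tau+d^6)$ bit operations.

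Finally I would augment the graph $\widetilde{\mathrm{Gr}}(\tilde{P})$. For each $i$ with $c_1(\alpha_i)=0$, I insert a new vertical edge at the abscissa corresponding to $I_i$, passing through every vertex $P_{i,j}$ of the $i$-th special fiber and extending through the two points $I_{i-1,\cdot}$ and $I_{i,\cdot}$ adjacent to infinity. For each real root of $c_2$ located in $(\alpha_i,\alpha_{i+1})$, I insert an additional "regular" abscissa between the columns of $I_{i-1}$-type vertices (the combinatorial position inside the graph is irrelevant as long as the cyclic order of the new vertical lines is consistent with the ordering of roots of $c_2$). This insertion is purely combinatorial and costs $\tO(d^2)$ bit operations given the isolating data. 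The cumulative cost is $\tO(d^5\tau+d^6)$, as required.

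The only delicate step is step (2): the sign determination of $c_2$ must distinguish lines of $\mathcal{V}_2$ that fall strictly inside a regular interval from spurious coincidences at the boundary $\alpha_i$, but Proposition~\ref{comparingroots} explicitly handles sign determination and isolation of roots of auxiliary integer polynomials within the claimed bound, so the only obstacle is bookkeeping the magnitudes carefully enough to keep $N^2\Lambda+N^3\in\tO(d^5\tau+d^6)$, which the estimates above already achieve.
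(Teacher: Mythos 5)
Your proposal is correct and follows essentially the same route as the paper: decompose $c^\star$ into $c_1=\gcd(c^\star,D_X)$ and $c_2=c^\star/c_1$, identify the roots of $c_1$ among the $\alpha_i$ and locate the roots of $c_2$ in the regular intervals via Proposition~\ref{comparingroots}, with the same magnitude bookkeeping giving $\tO(d^5\tau+d^6)$. The only cosmetic difference is that you localize the roots of $c_2$ using signs at the separating points $\xi_i$ rather than directly counting roots per interval, which is an equivalent use of the same proposition.
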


For a complete combinatorial description of  the topology of $V_{\mathbb{R}}(P)$ we define the  finite list $${\mathcal{L}}( P)=[N'_0,L'_1,\ldots,L'_\delta ,N'_\delta]$$ where
\begin{itemize}
\item[-] $L'_i=[[m_{i},w_i],[[\ell_{i,j},r_{i,j}], 0 \le j\le m_{i}+1]]$  for $i=1,\ldots,\delta$,
\item[-] $N'_i=[m'_i,v_i]$ for $i=0,\ldots,\delta$,
\end{itemize}
where $w_i=1$ if the line $X=\alpha_i$ belongs to  $V_{\mathbb{R}}(P)$ for $i=1,\ldots, \delta$ and  $w_i=0$ otherwise and $v_i$ is the number of distinct vertical lines $X=x$ with $\alpha_i<x<\alpha_{i+1}$ for $i=1,\ldots,\delta-1$  and $v_0$ (resp. $v_\delta$) is the number of distinct vertical lines $X=x$ with $x<\alpha_1$ (resp. $x>\alpha_\delta$).
Finally the
simple  planar graph 
$\mathrm{Gr}(P)$
is defined as $$\mathrm{Gr}(Q)\cup \bigcup_{i=1,\ldots,\delta\atop w_i=1} V_i \cup \bigcup_{i=0,\ldots,\delta\atop \ell=1,\ldots,v_i} V_{i,\ell}$$
with $V_i$ the vertical segment defined by $X=2i, 0<Y<d'+1$ and $V_{i,\ell}$ for $i=0,\ldots,\delta,\ell=1,\ldots,v_i$ is the vertical segment  defined by the equation
$$X=2i+\frac{2\ell}{v_i+1},0<Y<d'+1.$$

It is clear that
\begin{proposition}
$\mathrm{Gr}(P)\subset\mathbb (0,2\delta)\times (0,d'+1)$
is homemorphic to $V_{\mathbb{R}}(P)\subset\mathbb{R}^2$.
\end{proposition}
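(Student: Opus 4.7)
The plan is to upgrade the preceding homeomorphism $\widetilde{\mathrm{Gr}}(\tilde P)\cong V_{\mathbb{R}}(\tilde P)$ into a homeomorphism $\mathrm{Gr}(P)\cong V_{\mathbb{R}}(P)$ by extending across the added vertical lines. By construction $V_{\mathbb{R}}(P)=V_{\mathbb{R}}(\tilde P)\cup \mathcal{V}_1\cup \mathcal{V}_2$, where $\mathcal{V}_1$ collects the vertical lines $\{X=\alpha_i\}$ with $w_i=1$ (those whose abscissa is an $X$-critical value of $\tilde P$) and $\mathcal{V}_2$ collects the vertical lines at the abscissae counted by the integers $v_i$ (those sitting strictly between two consecutive critical values, or to the left of $\alpha_1$, or to the right of $\alpha_\delta$). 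Correspondingly $\mathrm{Gr}(P)=\widetilde{\mathrm{Gr}}(\tilde P)\cup\bigcup_{w_i=1}V_i\cup\bigcup_{i,\ell}V_{i,\ell}$. The task is therefore to show that the natural bijection $\mathcal{V}_1\cup\mathcal{V}_2 \leftrightarrow \{V_i\}\cup\{V_{i,\ell}\}$ can be realized as the restriction of a global homeomorphism compatible with the one already constructed on the $\tilde P$-part.

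First I would fix a homeomorphism $\tilde\Phi:V_{\mathbb{R}}(\tilde P)\to\widetilde{\mathrm{Gr}}(\tilde P)$ provided by the previous proposition, with the additional property that for every $i$ the preimage of the vertical fiber $\{X=2i\}\cap\widetilde{\mathrm{Gr}}(\tilde P)$ is the real fiber $\{X=\alpha_i\}\cap V_{\mathbb{R}}(\tilde P)$ (respecting the vertical order of the $\beta_{i,j}$), and similarly for intermediate vertical fibers; such a representative is available because the strip-by-strip construction of $\tilde\Phi$ is built precisely from the CAD data of Theorem~\ref{sing-fibers}. For each line $\{X=\alpha_i\}\in\mathcal{V}_1$, the intersection with $V_{\mathbb{R}}(\tilde P)$ is the finite set of points $\{(\alpha_i,\beta_{i,j})\}_j$, and $\tilde\Phi$ sends this set in order to $\{P_{i,j}\}_j\subset V_i$; hence $\tilde\Phi$ extends to a homeomorphism from $\{X=\alpha_i\}$ (topologically a line) onto $V_i$ (topologically a segment, endowed with the endpoint compactification at $\pm\infty$) by an increasing affine reparametrization of the real $Y$-axis onto $(0,d'+1)$ sending the intermediate marked points to the correct marked points on $V_i$. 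For each line in $\mathcal{V}_2$, sitting over an abscissa $x_0\in(\alpha_i,\alpha_{i+1})$, the intersection with $V_{\mathbb{R}}(\tilde P)$ is again a finite vertically ordered set of regular points of the curve, and $\tilde\Phi$ maps this set to the ordered set of crossings between the segment $V_{i,\ell}$ (of appropriate index $\ell$) and the regular segments of $\widetilde{\mathrm{Gr}}(\tilde P)$ in the strip $(2i,2(i+1))\times(0,d'+1)$. An analogous affine reparametrization extends the map over this vertical line. Doing this independently for each line in $\mathcal{V}_1\cup\mathcal{V}_2$ yields a continuous bijection $\Phi:V_{\mathbb{R}}(P)\to\mathrm{Gr}(P)$ whose inverse is also continuous, essentially because in each compact region the topology is fully captured by the finite combinatorial data coming from the CAD.

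The main obstacle is to verify that the extension is really well defined and a homeomorphism at the vertical lines of $\mathcal{V}_1$, i.e.\ at an abscissa $\alpha_i$ that is simultaneously a critical value of $\tilde P$. At such a line, the added vertical line passes through every $X$-critical point $(\alpha_i,\beta_{i,j})$ of the $\tilde P$-part, and the local picture of $V_{\mathbb{R}}(P)$ near $(\alpha_i,\beta_{i,j})$ is the union of the vertical line with the finitely many branches of $V_{\mathbb{R}}(\tilde P)$ counted by $\textsc{Left}_{i,j}$ and $\textsc{Right}_{i,j}$; the local picture of $\mathrm{Gr}(P)$ at the corresponding vertex $P_{i,j}$ is the union of the segment $V_i$ with the straight segments entering $P_{i,j}$ from left and right, in numbers also equal to $\textsc{Left}_{i,j}$ and $\textsc{Right}_{i,j}$. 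The correctness of these counts is granted by Proposition~\ref{leftright}, and in particular Proposition~\ref{ambigousnoproblem} ensures that the unresolved ambiguities at corners and midpoints of adjacency boxes do not change $\textsc{Left}_{i,j}$ or $\textsc{Right}_{i,j}$. The same per-vertex local matching argument is handled at each $P_{i,j}$ with $j\notin\textsc{CritInd}_i$ (where both sides carry exactly one branch), and at the points $P_{i,0}, P_{i,m_i+1}$ together with the ends of $V_i$, using Proposition~\ref{asympt} to match asymptotic branches with the corresponding vertical-segment ends. Once the local matching at every vertex of $\mathrm{Gr}(P)$ is verified, the extension $\Phi$ is a homeomorphism on each closed strip and its restrictions to the common vertical boundaries agree, so by the gluing lemma $\Phi$ is a global homeomorphism, completing the proof.
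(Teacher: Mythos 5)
Your argument is correct and is exactly the routine verification that the paper leaves implicit (the paper states this proposition with no proof beyond ``It is clear that''): extend the homeomorphism $\widetilde{\mathrm{Gr}}(\tilde P)\cong V_{\mathbb{R}}(\tilde P)$ fiberwise over the added vertical lines, matching marked points in vertical order, and glue strip by strip. The only nitpick is that the segment $V_i$ is already the \emph{open} segment $0<Y<d'+1$, hence directly homeomorphic to the vertical line $\mathbb{R}$, so no endpoint compactification is involved.
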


\begin{exemple}\label{exemple2}
Continuing Example \ref{exemple1},
we have 
three vertical lines of equation 
$$X=\frac{-1}{4},X=\frac{1}{8},X=\frac{1}{16}$$
 We obtain
$${\mathcal{L}}(P)=[N'_0,L'_1,N'_1,L'_2,N'_2,L'_3,N'_3,L'_4,N'_4,L'_5 ,N'_5]$$ where
\begin{itemize}
\item[-] $N'_0=[3,0]$
\item[-] $L_1=[[2,0],[[0,0],[2,2],[1,1],[0,0]]]$
\item[-] $N_1=[3,0]$
\item[-] $L_2=[[4,1],[[0,0],[1,1],[1,1],[0,2],[1,1],[0,0]]]$
\item[-] $N_2=[5,0]$
\item[-] $L_3=[[2,0],[[1,0],[2,2],[2,2],[0,1]]]$
\item[-] $N_3=[5,2]$
\item[-] $L_4=[[4,0],[[0,0],[1,1],[2,0],[1,1],[1,1],[0,0]]]$
\item[-] $N_4=[3,0]$
\item[-] $L_5=[[2,0],[[0,0],[1,1],[2,2],[0,0]]]$
\item[-] $N_5=[3,0]$
\end{itemize}
\begin{figure}[!htp]
      \hfill\hbox to 0pt{\hss\includegraphics[width=8cm,height=8cm]{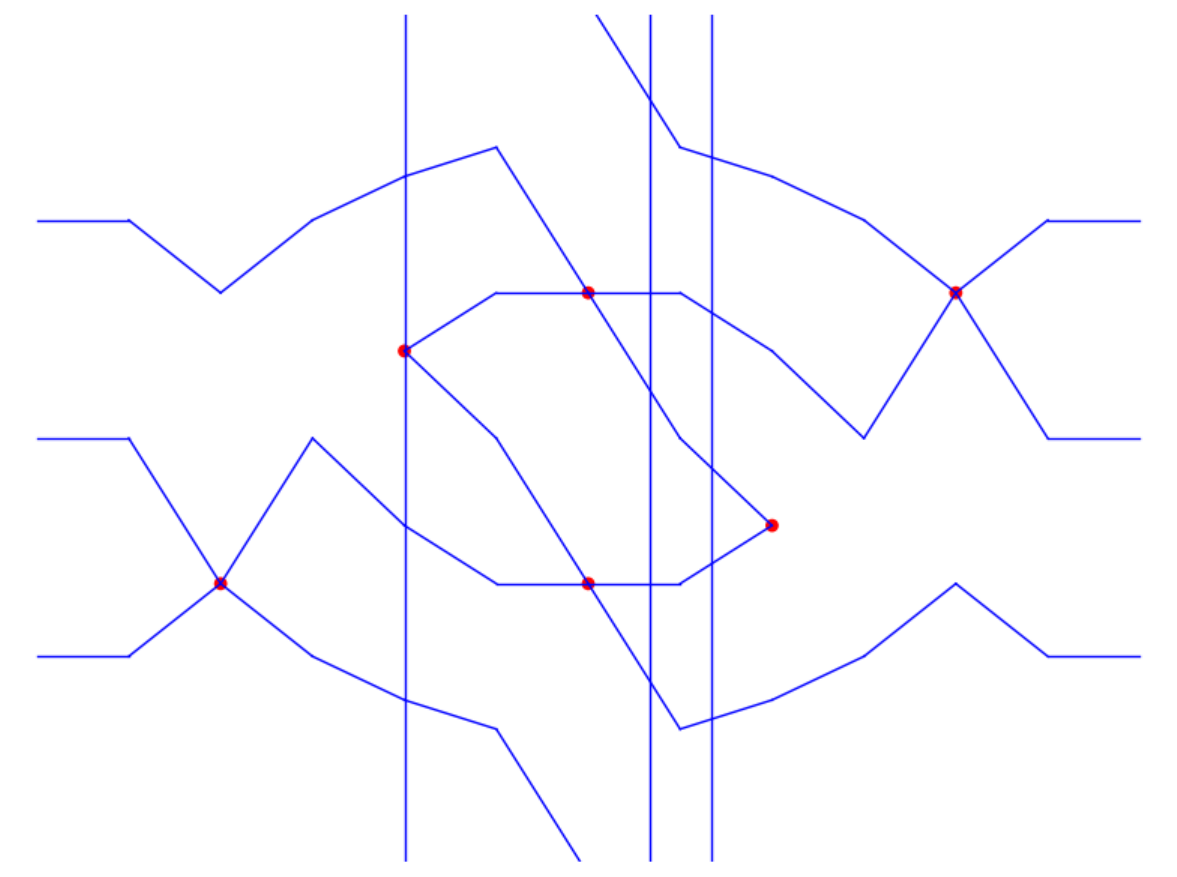}\hss}\hfill\null
\caption{\label{description_graphbis}Simple planar graph isotopic to the curve in  Example \ref{exemple1} }
\end{figure}
\end{exemple}

Finally, summarizing our results, we proved Theorem \ref{finaltopology}.

\section*{Acknowledgements}
We are grateful to the anonymous referees for their relevant remarks and suggestions.

\section*{Data sharing}
Data sharing not applicable to this article as no datasets were generated or analysed during the current study.

\newpage
 \bibliographystyle{plain} 
\bibliography{topology}

\end{document}